\documentclass[11pt]{amsart}
\usepackage{amsmath}
\usepackage{mathtools}
\usepackage{}
\usepackage{graphicx}
\usepackage[colorlinks=true, allcolors=blue]{hyperref}
\usepackage{amsfonts}
\usepackage{amsthm}
\usepackage{newlfont}
\usepackage{amscd}
\usepackage{amsgen}
\usepackage{amssymb}
\usepackage{mathrsfs}	
\usepackage{longtable}
\usepackage{listings}
\usepackage{extarrows}
\usepackage{tikz}
\usepackage{tikz-cd}
\usepackage{verbatim}
\numberwithin{equation}{section}
\usepackage[all]{xy}
\usepackage{color}
\usepackage{amssymb}
\usepackage[left=3cm,right=3cm]{geometry}
\usepackage{tikz}
\usepackage{tikz-cd}
\usepackage{mathtools}
\usepackage{bm} 
\usepackage{dynkin-diagrams}
\usetikzlibrary{matrix,shapes,arrows,decorations.pathmorphing}
\usepackage{calligra}
\usepackage{mathrsfs}
\usepackage{enumitem} 
\usepackage{tgpagella} 
\usepackage[parfill]{parskip} 
\usepackage{float}
\restylefloat{table}
\usepackage{ytableau}
\usepackage{adjustbox}
\usetikzlibrary{calc,matrix,positioning}

\tikzset{ 
    table/.style={
        matrix of nodes,
        row sep=-\pgflinewidth,
        column sep=-\pgflinewidth,
        nodes={rectangle, text width=2.5em, text height = 1.5em, align=center},
        text depth=1.25ex,
        text height=2.5ex,
        nodes in empty cells
    },
}


\let\blb\mathbb

\def\CC{{\blb C}}

\def\HH{{\blb H}}

\def\LL{{\blb L}}

\def\PP{{\blb P}}

\def\RR{{\blb R}}

\def\VV{{\blb V}}

\let\cal\mathcal
\def\Ac{{\cal A}}
\def\Bc{{\cal B}}

\def\Ec{{\cal E}}
\def\Fc{{\cal F}}
\def\Gc{{\cal G}}
\def\Hc{{\cal H}}

\def\Lc{{\cal L}}
\def\Mc{{\cal M}}

\def\Oc{{\cal O}}
\def\Pc{{\cal P}}
\def\Qc{{\cal Q}}

\def\Uc{{\cal U}}
\def\Vc{{\cal V}}
\def\Wc{{\cal W}}
\def\Xc{{\cal X}}

\def\Zc{{\cal Z}}

\def\Fm{{\mathfrak F}}

\DeclareMathOperator{\Sym}{Sym}

\newtheorem{lemma}{Lemma}[section]
\newtheorem{proposition}[lemma]{Proposition}
\newtheorem{theorem}[lemma]{Theorem}
\newtheorem{corollary}[lemma]{Corollary}
\newtheorem{condition}[lemma]{Condition}

\newtheorem{conjecture}[lemma]{Conjecture}

\setcounter{MaxMatrixCols}{200}  

\theoremstyle{remark}

\newtheorem{remark}[lemma]{Remark}

\def\dbcoh{D^b}
\def\wt{\widetilde}

\def\arw{\longrightarrow}
\def\Hom{\operatorname{Hom}}

\def\Ext{\operatorname{Ext}}

\DeclareMathOperator{\sHom}{\mathscr{H}\text{\kern -3pt {\calligra\large om}}\,}

\newcommand\quotient[2]{
        \mathchoice
            {
                \text{\raise1ex\hbox{$#1$}\Big/\lower1ex\hbox{$#2$}}%
            }
            {
                #1\,/\,#2
            }
            {
                #1\,/\,#2
            }
            {
                #1\,/\,#2
            }
    }

\makeatletter
\def\namedlabel#1#2{\begingroup
    #2%
    \def\@currentlabel{#2}%
    \phantomsection\label{#1}\endgroup
}
\makeatother


\title{Fano fibrations and DK conjecture for relative Grassmann flips}




\author{Marco Rampazzo}
\address{
Alma Mater Studiorum Università di Bologna\\ Dipartimento di Matematica \\ Piazza di Porta San Donato 5\\ 40126 Bologna.}
\email[M.~ Rampazzo]{marco.rampazzo3@unibo.it}

\begin{document}

\begin{abstract}
    Given a vector bundle $\Ec$ on a smooth projective variety $B$, the flag bundle $\Fc l(1,2,\Ec)$ admits two projective bundle structures over the Grassmann bundles $\Gc r(1, \Ec)$ and $\Gc r(2, \Ec)$. The data of a general section of a suitably defined line bundle on $\Fc l(1,2,\Ec)$ defines two varieties: a cover $X_1$ of $B$, and a fibration $X_2$ on $B$ with general fiber isomorphic to a smooth Fano variety. We construct a semiorthogonal decomposition of the derived category of $X_2$ which consists of a list of exceptional objects and a subcategory equivalent to the derived category of $X_1$. As a byproduct, we obtain a new full exceptional collection for the Fano fourfold of degree $12$ and genus $7$. Any birational map of smooth projective varieties which is resolved by blowups with exceptional divisor $\Fc l(1, 2, \Ec)$ is an instance of a so-called Grassmann flip: we prove that the DK conjecture of Bondal--Orlov and Kawamata holds for such flips. This generalizes a previous result of Leung and Xie to a relative setting.
\end{abstract}

\maketitle

\section{Introduction}
    The role of the derived category of coherent sheaves as an invariant has been object of study for decades: the remarkable properties it exhibits in the context of Fano varieties \cite{bondalorlovreconstruction} and K3 surfaces \cite{orlov_k3} motivates the attempts to relate it to other geometric data, such as the class in the Grothendieck ring of varieties \cite{borisovcaldararu, kuznetsovshinder}, the Hodge structure and the isomorphism/birational class \cite{ottemrennemo,borisovcaldararuperry}. About the latter, several counterexamples rule out the possibility that this can be true in general, but it is conjectured that the derived category should behave like a birational invariant if we restrict our attention to a specific class of maps, called K-equivalences and K-inequalities. The conjecture, proposed by Bondal--Orlov \cite{bondalorlov} and Kawamata \cite{kawamata}, can be formulated as:\footnote{Note that several authors call ``DK-conjecture'' the statement about K-equivalences, without considering K-inquealities.}
    \begin{conjecture}[DK conjecture]\label{conj:dk_conjecture}
        Consider a K-inequality $\mu:\Xc_1\dashrightarrow\Xc_2$, i.e. a birational map resolved by two morphisms $g_i:\Xc\arw\Xc_i$ such that $g_2^*K_{\Xc_2}\sim_{\operatorname{lin}} g_1^*K_{\Xc_1} + D$, where $D$ is an effective divisor. Then, there is a fully faithful functor $\Phi: \dbcoh(\Xc_1)\xhookrightarrow{\,\,\,\,\,}\dbcoh(\Xc_2)$. Moreover, assume that $\mu$ is a K-equivalence, i.e. $D = 0$. Then, $\Phi$ is an equivalence of categories.
    \end{conjecture}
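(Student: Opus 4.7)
The plan is to construct a Fourier--Mukai kernel supported on a common resolution and verify the Bondal--Orlov fully-faithfulness criterion, then derive the K-equivalence case from the resulting symmetry of adjoints.

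First I would fix a smooth common resolution $g_1:\Xc\to\Xc_1$, $g_2:\Xc\to\Xc_2$ of $\mu$ with $g_2^*K_{\Xc_2}\sim_{\operatorname{lin}} g_1^*K_{\Xc_1}+D$, $D$ effective. The natural candidate for $\Phi$ is the integral transform $\Phi:=R(g_2)_*\circ Lg_1^*$, whose kernel is $\Oc_\Xc$ pushed forward along $(g_1,g_2):\Xc\to\Xc_1\times\Xc_2$. To apply the Bondal--Orlov criterion I would test on skyscraper sheaves $k(x)$, $x\in\Xc_1$: the vanishings $\Hom^i(\Phi(k(x)),\Phi(k(y)))=0$ for $x\neq y$ or for $i$ outside $[0,\dim\Xc_1]$, together with $\Hom(\Phi(k(x)),\Phi(k(x)))=k$, reduce by the projection formula and relative Serre duality to cohomology on the fibers of $g_1$ and $g_2$. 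The effectiveness of $D$ is the source of the one-sided vanishing responsible for full faithfulness but not essential surjectivity.

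For the K-equivalence case $D=0$, the relative dualizing complexes $\omega_{g_1}$ and $\omega_{g_2}$ coincide, so the kernels of the left and right adjoints of $\Phi$ agree (as integral kernels on $\Xc_2\times\Xc_1$ they differ only by the twist given by $D$). A fully faithful functor with isomorphic left and right adjoints is automatically an equivalence, since the counit of the left adjunction is a split epimorphism whose kernel must vanish by the isomorphism $\Phi^L\cong\Phi^R$.

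The main obstacle is that the statement, as worded, is an open conjecture. The strategy above is the Bondal--Orlov--Bridgeland blueprint, and it goes through only when the exceptional loci of $g_1,g_2$ admit sufficiently explicit geometry for the fiber cohomology computations: standard flops, Mukai flops, projective-bundle blowups, GIT wall-crossings (Halpern-Leistner, Ballard--Favero--Katzarkov), and---as the rest of this paper shows---relative Grassmann flips. A proof in full generality would presumably require either factoring an arbitrary K-inequality into such elementary moves (in the spirit of Weak Factorization), or new tools such as noncommutative resolutions or motivic integration to bypass the direct fiberwise analysis of $R(g_2)_*Lg_1^* k(x)$ along the exceptional divisor.
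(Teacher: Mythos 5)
The statement you are trying to prove is a \emph{conjecture}, and the paper treats it as such: Conjecture~\ref{conj:dk_conjecture} is never proved in this paper. What the paper actually establishes is Theorem~\ref{main_theorem_DK_intro}, a special case in which the exceptional divisor of the resolution is a flag bundle $\Fc l(1,2,\Ec)$, and the proof there is not the Bondal--Orlov fiberwise criterion at all but rather an explicit comparison of two semiorthogonal decompositions of $\dbcoh(\Xc)$ via a long sequence of mutations (the ``chess game'' of Section~\ref{sec:grassmann_flip}, building on Section~\ref{sec:derived_cats_cover_and_fibration}). So there is nothing in the paper against which your sketch can be matched as a ``same approach'' or ``different approach'' to the same proof; the paper simply does not contain a proof of this statement.

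That said, your sketch as written has two concrete problems beyond the acknowledged openness. First, the candidate kernel $Rg_{2*}\circ Lg_1^*$ (equivalently $\Oc_\Xc$ pushed to $\Xc_1\times\Xc_2$) is known to \emph{fail} in standard examples: Namikawa showed that for a Mukai flop in dimension $\geq 4$ this functor is not an equivalence, and the correct kernel must be replaced by (the structure sheaf of) the fiber product $\Xc_1\times_{\bar\Xc}\Xc_2$ over the contraction. Thus the conjecture cannot be reduced to verifying the Bondal--Orlov conditions for the fixed kernel you chose; identifying the right kernel is itself the hard part, and it varies from case to case (indeed, in the present paper the functor is built from a chain of mutations, not from a single geometric kernel). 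Second, the argument in your last paragraph for the K-equivalence case is not correct as stated: ``the counit of the left adjunction is a split epimorphism whose kernel must vanish by $\Phi^L\cong\Phi^R$'' does not follow. The actual argument (Bridgeland) is that if $\Phi$ is fully faithful with both adjoints and intertwines the Serre functors, then the right orthogonal $\Phi(\dbcoh(\Xc_1))^\perp$ is Serre-invariant and must vanish; one derives the vanishing by combining the two semiorthogonal decompositions coming from the two adjoints, not by inspecting the counit alone. In short: your outline is a reasonable statement of the well-known blueprint and correctly flags that it only works when the exceptional geometry is tractable, but it neither proves the conjecture (which the paper does not claim to do either) nor correctly reproduces the special-case argument that the paper actually gives.
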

    Generalizing the work of Kanemitsu \cite{kanemitsu} on the so-called simple K-equivalences, Leung and Xie \cite{leung_xie_new} introduce the notion of \emph{simple flips} or \emph{simple K-inequalities}, i.e. K-inequalities as in Conjecture \ref{conj:dk_conjecture} such that $g_1$ and $g_2$ are blowups of smooth centers with the same exceptional divisor. For all such flips, the exceptional divisor is a family of special Fano varieties with two projective bundle structures which we call \emph{generalized roofs}: these varieties are classified in the homogeneous case \cite{ourpaper_generalizedroofs, leung_xie_new}. Flips associated to generalized homogeneous roofs are called \emph{generalized Grassmann flips}).\\
    Evidence for the DK conjecture, for generalized Grassmann flips (and flops) has been found in \cite{bondal_orlov_flips, kawamata, namikawa_mukai_flops, segal_C2, morimura_c2_a4, ueda_G2_flop, hara_uedaflop} for the flop case, and \cite{bondal_orlov_flips,leung_xie,leung_xie_new} for the flip case. All generalized Grassmannian flips which have been addressed (with $D\neq 0$) are such that the exceptional divisor is itself a generalized roof, i.e. the family is over a single point: in this paper, instead, we focus on the class of simple flips with exceptional divisor isomprphic to a partial flag bundle $\Fc l(1, 2, \Ec)$ over an arbitrary smooth base $B$, i.e. a locally trivial fibration with fiber isomorphic to $F(1,2,N)$ for some $N$. Our main result is the following:
    \begin{theorem}\label{main_theorem_DK_intro}
        Consider a Grassmann flip $\mu:\Xc_1\dashrightarrow\Xc_2$ such that the exceptional divisor of the flip is isomorphic to a flag bundle $\Fc l(1,2,\Ec)$ over a smooth projective base. Then $\dbcoh(\Xc_1)\subset\dbcoh(\Xc_2)$, i.e. $\mu$ satisfies the DK-conjecture.
    \end{theorem}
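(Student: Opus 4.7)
The plan is to apply Orlov's blowup formula to both resolution morphisms $g_i: \Xc \to \Xc_i$ of the flip, and to compare the two resulting semiorthogonal decompositions of $\dbcoh(\Xc)$ on the common exceptional divisor $E \cong \Fc l(1,2,\Ec)$. The two projective bundle structures $\pi_i : E \to Z_i := \Gc r(i, \Ec)$ identify $E$ with $\PP(N_{Z_i/\Xc_i})$, which forces the codimension of $Z_1 \subset \Xc_1$ to be $N-1$ and that of $Z_2 \subset \Xc_2$ to be $2$, where $N = \rk \Ec$. A discrepancy computation comparing $K_\Xc = g_1^* K_{\Xc_1} + (N-2)E = g_2^* K_{\Xc_2} + E$ then shows that the flip is strict (that is, $D \ne 0$) precisely when $N > 3$. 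I would first match this local geometry with the Fano-fibration setup of the abstract, so that the semiorthogonal decomposition of $\dbcoh(X_2)$ containing $\dbcoh(X_1)$ becomes available as the core categorical ingredient controlling the mutations near the flipping locus.

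Next, the blowup formula applied to each $g_i$ gives
\[
\dbcoh(\Xc) = \langle g_i^* \dbcoh(\Xc_i),\, j_* \pi_i^* \dbcoh(Z_i) \otimes \Oc_\Xc(-E),\, \ldots,\, j_* \pi_i^* \dbcoh(Z_i) \otimes \Oc_\Xc(-(c_i-1)E) \rangle,
\]
with $c_1 = N-1$, $c_2 = 2$, and $j: E \hookrightarrow \Xc$ the inclusion. The key step will be to transform the $g_1$-decomposition into the $g_2$-decomposition by a chain of mutations whose effect on the pieces supported on $E$ corresponds to interpolating between the two projective bundle decompositions of $\dbcoh(E)$ over $Z_1$ and $Z_2$. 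The Fano-fibration semiorthogonal decomposition from the abstract supplies exactly the categorical rearrangement required: the $N-2$ copies of $\dbcoh(Z_1)$ in the $g_1$-SOD collapse, after mutation through the exceptional objects provided by the Fano-fibration theorem, into the single copy of $\dbcoh(Z_2)$ in the $g_2$-SOD together with a residual admissible subcategory equivalent to $\dbcoh(\Xc_1)$. The resulting fully faithful embedding $\dbcoh(\Xc_1) \hookrightarrow \dbcoh(\Xc_2)$ will then be realized by a Fourier--Mukai kernel naturally assembled from $g_1, g_2$ and the line bundle on $\Fc l(1,2,\Ec)$ from the Fano-fibration construction.

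The hardest part will be tracking the powers of $\Oc_\Xc(-E)$ under the change of projective bundle structure on $E$: its restriction $\Oc_E(-E)$ decomposes as an asymmetric combination of the tautological line bundles $\Oc_{\pi_1}(1)$ and $\Oc_{\pi_2}(1)$, and one must ensure that the mutation cascade never leaves the range in which the Fano-fibration SOD applies. Equally delicate is the compatibility of the functors $j_*$ and $\pi_i^*$ with the line bundle on $\Fc l(1,2,\Ec)$ whose section defines $X_1, X_2$; verifying that the exceptional objects produced along the mutation are genuinely exceptional and appear in the correct admissible order is where most of the computational burden lies, while the rest of the argument is formal manipulation of semiorthogonal decompositions.
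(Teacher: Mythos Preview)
Your overall framework is correct and matches the paper: apply Orlov's blowup formula to both contractions $\pi_i:\Xc\to\Xc_i$, then rearrange the complement of $\pi_1^*\dbcoh(\Xc_1)$ by mutations until a copy of $\sigma_*p_2^*\dbcoh(\Gc r(2,\Ec))\otimes\Lc^{-1}$ appears, so that comparison with the $\pi_2$-decomposition yields the embedding. The codimension and discrepancy counts you give are also correct.

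The gap is in what you identify as the ``core categorical ingredient''. The Fano-fibration theorem of the abstract concerns a \emph{hyperplane section} $\Mc\subset\Fc l(1,2,\Ec)$ and zero loci $X_1\subset\Gc r(1,\Ec)$, $X_2\subset\Gc r(2,\Ec)$ cut by a section of $\Lc$. None of these varieties appear in the flip: there the exceptional divisor is the \emph{entire} flag bundle, no section is chosen, and the categories to be compared are those of the ambient $\Xc_i$, not of subvarieties of the Grassmann bundles. So the semiorthogonal decomposition $\dbcoh(X_2)=\langle\dbcoh(X_1),\dots\rangle$ is not available as an input here, and your plan to ``collapse the $N-2$ copies of $\dbcoh(Z_1)$ through the exceptional objects provided by the Fano-fibration theorem'' has no content.

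What the paper actually does is run the same chess game of mutations a second time, now inside $\dbcoh(\Xc)$ rather than $\dbcoh(\Mc)$. One introduces blocks $\mathbf T^m_{x,y}:=\sigma_*(\Sym^m\Uc_2^\vee(xh_1+yh_2)\otimes\tau^*\dbcoh(B))$ (the analogue, with $\sigma_*$, of the blocks $\mathbf S^m_{x,y}$ used on $\Mc$) and proves a mutation lemma for them by the same fibrewise Borel--Weil--Bott computations; the key point is that $\sigma^*\sigma_*\Oc(th_1)$, restricted to a fibre $F(1,2,2n+\epsilon)$, reduces to the same $\Hom$-spaces that drove the fibration argument. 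The sequence of chessboard moves is then literally identical to the fibration case, and at the end one reads off $\sigma_*p_2^*\dbcoh(\Gc r(2,\Ec))$ inside the mutated complement. In short: the two theorems are proved in parallel by the same technique, not one from the other.
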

    This is a generalization of the main result of \cite{leung_xie} to the relative setting over a smooth base. As in the original work, the proof is based on constructing fully faithful embeddings of $\dbcoh(\Xc_1)$ and $\dbcoh(\Xc_2)$ in  $\dbcoh(\Xc)$, such that the semiorthogonal complements are generated by full exceptional collections of (pushforwards of) vector bundles on the exceptional divisor, and then proving the existence of an embedding of the semiorthogonal complements via mutations of exceptional objects. This is done with a diagrammatic technique called ``chess game'', which allows to easily visualize the mutations, otherwise very cumbersome to write. This kind of approach first appeared in \cite{kuznetsov_hpd, thomas_chess_game}, and then in the proof of the main results of \cite{leung_xie, leung_xie_new}: instead of directly generalizing the proof of \cite{leung_xie}, to the relative setting, we propose a modified and simplified version of the chess game. Besides the Grassmann flip, the geometry of $\Fc l(1, 2, \Ec)$ allows to introduce some interesting pairs of varieties, of which we describe the relation at the level of derived categories, by means of the same kind of chess game. Consider a general hyperplane section $\Mc\subset\Fc(1, 2, \Ec)$. Such variety has two contractions, with special fibers, respectively, over two smooth varieties $X_1$ and $X_2$. These are subvarieties of Grassmann bundles, and are zero loci of pushforwards of the section defining $\Mc$. Under the mild assumption \ref{cond:bpf}, by using the fact that the derived categories of $X_i$ admit fully faithful embeddings in $\dbcoh(\Mc)$, we prove the following:
    \begin{theorem}\label{thm:main_theorem_fibrations_intro}
        Let $X_1$ and $X_2$ be fibrations over $B$ as above. Then there is a fully faithful functor $\Psi:\dbcoh(X_1)\xhookrightarrow{\,\,\,\,\,}\dbcoh(X_2)$.
    \end{theorem}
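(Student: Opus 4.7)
The plan is to realise both $\dbcoh(X_1)$ and $\dbcoh(X_2)$ as semiorthogonal components of $\dbcoh(\Mc)$ and to convert one decomposition into the other by a chess game of mutations, mirroring the simplified variant introduced for Theorem~\ref{main_theorem_DK_intro}.

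First, I would make explicit the two contractions $q_i\colon\Mc\arw X_i$ arising from the projective bundle structures $p_i\colon\Fc l(1,2,\Ec)\arw\Gc r(i,\Ec)$: since $X_i$ is the vanishing locus of the pushforward $(p_i)_{*}s$, where $s$ is the section cutting out $\Mc$, the restriction of $p_i$ to $\Mc$ factors through $X_i$ exactly on the special fibers, producing the morphism $q_i$, whose generic fibers are hyperplane sections of the fibers of $p_i$. Under assumption \ref{cond:bpf}, a relative Orlov-type argument (equivalently, a Cayley-trick computation on $\Mc$) provides fully faithful functors $\alpha_i\colon\dbcoh(X_i)\xhookrightarrow{}\dbcoh(\Mc)$ together with explicit semiorthogonal decompositions
\[
\dbcoh(\Mc)=\langle \Ec_1,\ldots,\Ec_n,\alpha_1(\dbcoh(X_1))\rangle=\langle \Fc_1,\ldots,\Fc_m,\alpha_2(\dbcoh(X_2))\rangle,
\]
whose exceptional components are spanned by twists of structure sheaves by the natural line bundles coming from the two projective bundle structures on $\Fc l(1,2,\Ec)$.

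Next, starting from the first decomposition, I run the chess game: mutate the exceptional objects $\Ec_1,\ldots,\Ec_n$ past one another and past $\alpha_1(\dbcoh(X_1))$, using the diagrammatic bookkeeping introduced for Theorem~\ref{main_theorem_DK_intro}, in order to arrive at a decomposition of the shape
\[
\dbcoh(\Mc)=\langle \Fc_1,\ldots,\Fc_m,\Gc_1,\ldots,\Gc_k,\alpha_1(\dbcoh(X_1))\rangle,
\]
whose rightmost $k+1$ slots assemble — possibly after one further right mutation — into a subcategory equivalent to $\alpha_2(\dbcoh(X_2))$. The tautological inclusion of $\alpha_1(\dbcoh(X_1))$ into that rightmost block then yields the desired fully faithful functor $\Psi\colon\dbcoh(X_1)\xhookrightarrow{}\dbcoh(X_2)$.

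The main obstacle is closing the chess game: each elementary mutation requires a Koszul-type resolution and the vanishing of certain $\Ext$-groups between successive exceptional objects, and one must ultimately recognise the mutated tail of the first SOD as a mutation of the $\alpha_2$-block of the second SOD — this is the technical heart of the argument, and it is exactly the kind of bookkeeping that the simplified chess-game diagrammatics are designed to make tractable, helped by the fact that both exceptional collections originate from the same tautological bundles on $\Fc l(1,2,\Ec)$. A secondary technical point is justifying the Orlov-type embeddings, since $q_i$ is not globally a projective bundle but only outside the locus where the fiber dimension jumps; here assumption \ref{cond:bpf} should suffice to reduce the verification to a fiberwise analysis on the ambient $\Mc$.
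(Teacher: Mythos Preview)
Your high-level strategy matches the paper's: embed both $\dbcoh(X_i)$ into $\dbcoh(\Mc)$ via the Cayley trick, then run a chess game of mutations to show that the semiorthogonal complement of $\dbcoh(X_2)$ sits inside that of $\dbcoh(X_1)$. However, your geometric setup for the embeddings is wrong, and this error propagates into the ``secondary technical point'' you raise at the end.

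The maps $q_i$ do \emph{not} go from $\Mc$ to $X_i$. The restriction of $p_i$ to $\Mc$ is a morphism $q_i\colon\Mc\to\Gc r(i,\Ec)$; it does not factor through $X_i$ at all. Over a point of $\Gc r(i,\Ec)\setminus X_i$ the fiber of $q_i$ is a hyperplane in the fiber of $p_i$, while over a point of $X_i$ the fiber of $q_i$ is the \emph{entire} fiber of $p_i$. This is precisely the geometry handled by the Cayley trick of \cite{cayleytrick}: one obtains a diagram
\[
\begin{tikzcd}
E_i\ar{r}{i_i}\ar{d}{\bar q_i} & \Mc\ar{d}{q_i}\\
X_i\ar[hook]{r} & \Gc r(i,\Ec)
\end{tikzcd}
\]
where $E_i=q_i^{-1}(X_i)$ is the projectivization of the normal bundle of $X_i$ in $\Gc r(i,\Ec)$, and the embedding of $\dbcoh(X_i)$ into $\dbcoh(\Mc)$ is $i_{i*}\bar q_i^{\,*}$. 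The complementary pieces are twists of $q_i^*\dbcoh(\Gc r(i,\Ec))$ by powers of $\Lc$. Once you set this up correctly, your worry that ``$q_i$ is not globally a projective bundle'' evaporates: the Cayley trick is exactly the tool that produces an Orlov-type decomposition in the presence of jumping fibers, so no separate fiberwise justification is needed.

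Two further corrections. First, the complementary blocks are not exceptional objects but admissible subcategories: after decomposing $\dbcoh(\Gc r(i,\Ec))$ relatively over $B$ (Orlov for $i=1$, Kuznetsov's Lefschetz collection for $i=2$), each block is an image of $\dbcoh(B)$ under a fully faithful functor, twisted by a vector bundle such as $\Sym^m\Uc_2^\vee(xh_1+yh_2)$ --- not merely by line bundles. Second, and relatedly, the mutations in the chess game are mutations of these $\dbcoh(B)$-blocks, so each step requires a relative vanishing statement; the paper reduces these to fiberwise Borel--Weil--Bott computations on $F(1,2,2n+\epsilon)$ (Lemmas \ref{lem:chess_game_exts}, \ref{lem:chess_game_exts_II}, \ref{lem:main_coh_lemma}), and the argument splits into separate games for $\epsilon=0$ and $\epsilon=1$.
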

    The proof of Theorem \ref{thm:main_theorem_fibrations_intro}, which is articulated in Proposition \ref{prop:main_theorem_fibrations_even} and Proposition \ref{prop:main_theorem_fibrations_odd}, allows to construct explicit semiorthogonal decompositions for $\dbcoh(X_2)$ containing $\dbcoh(X_1)$ as an admissible subcategory, and to describe the semiorthogonal complement in terms of pullbacks of $\dbcoh(B)$ twisted by appropriate objects. If we assume the further conditions \ref{cond:restriction_surjective} and \ref{cond:dimension_of_B}, the varieties $X_1$ and $X_2$ can be described as fibrations over $B$ where $X_1$ is a cover and the general fiber of $X_2$ is a smooth Fano variety.
    We provide an infinite series of examples of such embeddings, focusing on the cases where $B$, the $\Xc_i$'s and $\Xc$ are rational homogeneous varieties of type $A$, and the $X_i$'s are cut by general sections of homogeneous vector bundles. In particular, one of such examples provides an alternative full exceptional collection of length $16$ for the Fano fourfold of degree $12$ and genus $7$.
    \subsection*{Structure of the paper}
        In section \ref{sec:geoemtry_part_1} we describe the exceptional divisor of the Grassmann flip and its smooth hyperplane sections. We introduce the pairs of varieties $X_1, X_2$ of Theorem \ref{thm:main_theorem_fibrations_intro} and we discuss their properties. Then, the proof of Theorem \ref{thm:main_theorem_fibrations_intro} is explained in Section \ref{sec:derived_cats_cover_and_fibration}, with examples in Section \ref{sec:examples}. The Grassmann flip construction, along with the proof of Theorem \ref{main_theorem_DK_intro}, is addressed in Section \ref{sec:grassmann_flip}. Finally, we gather all Borel--Weil--Bott computations in Appendix \ref{app:borel_weil_bott}.

    \subsection*{Acknowledgments}
        This project originated from several discussions with Enrico Fatighenti, Micha\l\ Kapustka and Giovanni Mongardi. I would like to express my gratitude for all the suggestions, comments and insights they provided. I would like to thank Ying Xie for his valuable remarks on the first draft of this paper, and Naichung Conan Leung for inviting me to the Chinese University of Hong Kong to discuss about it. This work is supported by PRIN2020 ``2020KKWT53''. I am a member of GNSAGA of INDAM.
\section{Flag bundles and fibrations}\label{sec:geoemtry_part_1}
    \subsection{Notation}
        We shall work over the field of complex numbers. Given a general section $s$ of a vector bundle $E$ on a variety $X$, we call $Z(s)\subset X$ the zero locus of $s$. Moreover, for every $x\in X$, $E_x$ denotes the fiber of $E$ over the point $x$. We say a variety $Y$ has a projective bundle structure over $X$ if there is an isomorphism $Y\simeq \PP(E)$ for some vector bundle $E$ on some variety $X$, sometimes the projective bundle will be denoted by $\PP(E\arw X)$. We use Grothendieck's convention for projectivizations: given $\pi:\PP(E)\arw X$, there is a line bundle $L$ on $\PP(E)$ such that $L|_{\pi^{-1}(x)}\simeq\Oc_{\pi^{-1}(x)}(1)$ and such that $\pi_* L\simeq E$. We denote by $G(k, V_n)$ the Grassmannian parametrizing $k$-linear spaces in a fixed vector space $V_n\simeq\CC^n$. To unburden the notation, we sometimes use the expression $G(k, n)$. Similarly, we call $F(k_1, \dots, k_m, V_n)$ the (partial) flag variety parametrizing $m$-tuples $(V_{k_1}, \dots, V_{k_m})$ such that $V_{k_1}\subset V_{k_2}\subset \dots \subset V_{k_m}\subset V_n$. To denote cohomology of vector bundles, we write a direct sum of vector spaces shifted by the negative of the appropriate degree, e.g. $H^\bullet(\PP^n, \Oc\oplus\Oc(-n-1)) \simeq \CC[0]\oplus\CC[-n]$. The bounded derived category of coherent sheaves of a smooth projective variety $X$ will be indicated as $\dbcoh(X)$.
    \subsection{The flag bundle}
        Consider a vector bundle $\Ec\arw B$ of rank $2n+\epsilon$ over a smooth, projective base $B$, for $n\geq 2$ and $\epsilon\in\{0,1\}$, where the choice of notation is due to the fact that the proofs of Propositions \ref{prop:main_theorem_fibrations_even} and \ref{prop:main_theorem_fibrations_odd} depend on the parity of the rank of $B$. Then, call $\Fc l(1, 2, \Ec)$ the associated flag bundle over $B$, i.e. the locally trivial fibration with fiber $F(1,2,\Ec_b)$ for every $b\in B$. Similarly, the associated Grassmann bundle of $k$-subspaces in $\Ec_b$ for every $b$ will be called $\Gc r(k, \Ec)$ (in particular, one has $\Gc r(1, \Ec) = \PP(\Ec)$). We have the following commutative diagram, where all maps are locally trivial:
        \begin{equation}\label{eq:main_diagram}
            \begin{tikzcd}[row sep = large, column sep = tiny]
                & \Fc l (1,2, \Ec)\ar[swap]{dl}{p_1}\ar{dr}{p_2} &  \\
                \Gc r(1, \Ec)\ar[swap]{dr}{r_1} & & \Gc r(2, \Ec)\ar{dl}{r_2} \\
                &B& 
            \end{tikzcd}
        \end{equation}
        Recall that $\Gc r(k, \Ec)$ comes with a relative tautological short exact sequence:
        \begin{equation}\label{eq:relative_euler_grassmann_bundle}
            0\arw \Uc_k\arw r_k^*\Ec\arw \Qc_k\arw 0
        \end{equation}
        Now, consider the line bundle $\Lc:= p_1^*\Uc_1^\vee\otimes p_2^*\wedge^2\Uc_2^\vee$. Then, one has $p_{1*}\Lc \simeq \Uc_1^\vee\otimes\wedge^{2n+\epsilon-2}\Qc_1$, and $p_{2*}\Lc\simeq \Uc_2^\vee\otimes\wedge^2\Uc_2^\vee$. Moreover, $\Fc l(1,2,\Ec) \simeq\PP(p_{1*}\Lc\arw\Gc r(1, \Ec))\simeq\PP(p_{2*}\Lc\arw \Gc r(2, \Ec))$.\\
        One has:
        \begin{equation*}
            \begin{split}
                \omega_{\Gc r(k, \Ec)} & = \det(\Uc_k\otimes\Qc_k^\vee) \\
                & = \det(\Uc_k)^{\otimes(2n+\epsilon-k)}\otimes \det(\Qc_k^\vee)^{\otimes k}\\ & = \det(\Uc_k)^{\otimes 2n+\epsilon}\otimes \det(\Ec^\vee)^{\otimes k}.
            \end{split}
        \end{equation*}
        Moreover, by combining the relative Euler sequence and the relative tangent bundle sequence associated to $p_k$, we find:
        \begin{equation*}
            \omega_{\Fc l(1, 2, \Ec)} = \Uc_1^{\otimes 2}\otimes\wedge^2\Uc_2^{\otimes(2n+\epsilon-1)}\otimes\det\Ec^{\otimes (2n+\epsilon-3)}.
        \end{equation*}
        \begin{remark}
            The fibration $\Fc l(1, 2, \Ec)\arw B$ is an example of a family of \emph{generalized homogeneous roofs}, i.e. rational homogeneous varieties of Picard rank two which admit two projective bundle structures. These objects are generalizations of the homogeneous roofs introduced in \cite{kanemitsu}: they have been classified in \cite{ourpaper_generalizedroofs} in the context of derived categories of Fano varieites and, independently, in \cite{leung_xie_new} in the context of flips, while a classification for non-homogeneous cases has yet to be found.
        \end{remark}
        \subsection{Two fibrations over \texorpdfstring{$B$}{something}}\label{sec:cover_and_fano_fibration}
        Consider now a general section $s\in H^0(\Fc l(1,2,\Ec), \Lc)$ and call $\Mc\in\Fc l(1,2,\Ec)$ its zero locus. Define $X_k:= Z(p_{k*}s)$ for $k = 1,2$. By adjunction, one finds:
        \begin{equation*}
            \begin{split}
                \omega_\Mc & = \Uc_1\otimes\wedge^2\Uc_2^{\otimes(2n+\epsilon-2)}\otimes\det\Ec^{\otimes(2n+\epsilon-3)} \\
                \omega_{X_1} & = (\Uc_1^\vee)^{\otimes(2n+\epsilon-3)}\otimes\det(\Ec)^{\otimes(2n+\epsilon-3)} \\
                \omega_{X_2} & = (\wedge^2\Uc_2^\vee)^{\otimes(3-2n-\epsilon)}\otimes\det(\Ec^\vee)^{\otimes 2}.
            \end{split}
        \end{equation*}
        In general, there is no guarantee that the general section of $\Lc$ cuts a smooth variety, or even that $\Lc$ has global sections. However, $\Lc$ is defined up to pullbacks of line bundles from $B$, and therefore we have some freedom to choose it so that it has global sections (this choice, in light of Equation \ref{eq:relative_euler_grassmann_bundle}, is a consequence of a choice of a twist of $\Ec$ by a line bundle on $B$). By restricting $r_1$ and $r_2$, the varieties $X_1$ and $X_2$ inherit fibration structures over $B$. To achieve some control on the fibers, we introduce the following conditions on our setup.
        \begin{condition}\label{cond:bpf}
            The bundle $\Lc$ is basepoint free.
        \end{condition}
        \begin{condition}\label{cond:restriction_surjective}
            The restriction map
            \begin{equation*}
                \rho_b: H^0(\Fc l(1,2,\Ec), \Lc) \arw H^0(F(1,2,\Ec_b), \Oc(1,1))
            \end{equation*}
            is surjective.
        \end{condition}
        \begin{condition}\label{cond:dimension_of_B}
            The dimension of $B$ is smaller than $\binom{2n+\epsilon+1}{2} - 2n - \epsilon$.
        \end{condition}
        In fact, assuming that Conditions \ref{cond:bpf}, \ref{cond:restriction_surjective} and \ref{cond:dimension_of_B} hold, one has:
        \begin{lemma}\label{lem:the_cover_of_B}
            $X_1$ is a smooth $N : 1$ cover of $B$, where:
            \begin{equation}
                N = \sum_{i = 0}^{2n+\epsilon-1}(-3)^{2n+\epsilon-i-1}2^i\binom{2n+\epsilon}{i}.
            \end{equation}
            $X_2$ is a smooth fibration over $B$, with general fiber isomorphic to a Fano variety of codimension two and coindex three in $G(2, 2n+\epsilon)$.
        \end{lemma}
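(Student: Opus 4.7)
The plan is to verify smoothness, the covering property, and the Fano fibration structure via a combination of Bertini-type arguments and explicit Chern class computations. First, Condition \ref{cond:bpf} together with standard Bertini implies smoothness of $\Mc$. For the zero loci $X_k = Z(p_{k*}s)$ the key input is that the pushforward bundles $p_{k*}\Lc$ on $\Gc r(k,\Ec)$ are globally generated: on each fiber $F(1,2,\Ec_b)$ the bundles $p_{k*}\Oc(1,1)$ are homogeneous on a homogeneous variety and hence globally generated, and Condition \ref{cond:restriction_surjective} ensures that the restriction from global sections on $\Fc l(1,2,\Ec)$ to fiberwise sections is surjective. Applying Bertini for vector bundle sections, each $X_k$ is smooth of the expected codimension $\rk p_{k*}\Lc$, which equals $2n+\epsilon-1$ for $k=1$ and $2$ for $k=2$.

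Counting dimensions, $\dim X_1 = \dim\PP(\Ec) - (2n+\epsilon-1) = \dim B$, so $r_1|_{X_1}$ is generically finite, and $\dim X_2 = \dim B + 2(2n+\epsilon)-6$, with $r_2|_{X_2}$ generically of relative dimension $2(2n+\epsilon)-6$. The delicate point is upgrading these generic statements to hold over every point of $B$. I would set up the incidence variety
\[
    I = \{(b,s)\in B\times\PP H^0(\Fc l(1,2,\Ec),\Lc) \mid \text{the fiber of } X_k\to B \text{ over } b \text{ is not of the expected dimension or is singular}\},
\]
and bound from below the codimension of the corresponding ``bad'' locus in the fiberwise linear system $H^0(F(1,2,\Ec_b),\Oc(1,1))$ by $\binom{2n+\epsilon+1}{2} - (2n+\epsilon)$. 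The surjectivity in Condition \ref{cond:restriction_surjective} preserves this codimension under pullback along $\rho_b$, and Condition \ref{cond:dimension_of_B} then forces the image of $I$ in $\PP H^0(\Fc l(1,2,\Ec),\Lc)$ to be a proper subvariety. Hence for a general $s$ the fiber over every $b\in B$ has the expected dimension and is smooth: $X_1\to B$ is a finite cover and $X_2\to B$ is a smooth fibration of the claimed relative dimension.

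The degree $N$ of the cover is then $\int_{\PP(\Ec_b)} c_{2n+\epsilon-1}(p_{1*}\Lc|_{\PP(\Ec_b)})$. Using $p_{1*}\Lc=\Uc_1^\vee\otimes\wedge^{2n+\epsilon-2}\Qc_1$ and Equation \ref{eq:relative_euler_grassmann_bundle} restricted to a fiber, one has $c(\Qc_1)=(1-h)^{-1}$ on $\PP(\Ec_b)=\PP^{2n+\epsilon-1}$ with $h=c_1(\Uc_1^\vee)$. The splitting principle presents the Chern roots of $\wedge^{2n+\epsilon-2}\Qc_1$ as $h-\gamma_i$ and those of $\Uc_1^\vee\otimes\wedge^{2n+\epsilon-2}\Qc_1$ as $2h-\gamma_i$, so the top Chern class equals $\prod_i(2h-\gamma_i)$. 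Expanding in the elementary symmetric functions $e_j(\gamma)=c_j(\Qc_1)=h^j$ and extracting the coefficient of $h^{2n+\epsilon-1}$ yields the stated closed formula for $N$ after a standard binomial rearrangement.

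For $X_2$, the general fiber is a codimension-two subvariety of $G(2,2n+\epsilon)$ cut by a general section of the rank-two bundle $\Uc_2^\vee\otimes\wedge^2\Uc_2^\vee$, whose determinant is $(\wedge^2\Uc_2^\vee)^{\otimes 3}$. Combining with $\omega_{G(2,2n+\epsilon)}=\Oc(-(2n+\epsilon))$ via adjunction recovers the formula for $\omega_{X_2|_b}$ displayed above and shows it is anti-ample whenever $2n+\epsilon\geq 4$, so the fiber is Fano of the claimed numerical type. The main obstacle throughout is not the Chern class bookkeeping but the passage from genericity at a single fiber to genericity at every fiber, which is exactly what Conditions \ref{cond:restriction_surjective} and \ref{cond:dimension_of_B} are calibrated to supply.
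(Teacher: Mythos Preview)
Your overall framework matches the paper's—Bertini for smoothness, an incidence/parameter-counting argument for finiteness of $X_1\to B$, a top Chern class computation for $N$, and adjunction for the Fano condition on the fibers of $X_2$. However, the central step is left as a bare assertion: you \emph{state} that the bad locus in the fiberwise linear system has codimension at least $\binom{2n+\epsilon+1}{2}-(2n+\epsilon)$, but you never justify this number. In the paper this is precisely where the work is done. One identifies sections of $p_{1*}\Lc$ restricted to a fiber $\PP(\Ec_b)$ with tuples of quadrics $(q_1,\dots,q_{2n+\epsilon})$ satisfying the linear syzygy $\sum x_iq_i=0$, and then argues that the zero locus is positive-dimensional only if the $q_i$ are linearly dependent or admit a second, independent linear syzygy; a parameter count in each case produces exactly the bound above. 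Without this analysis your incidence argument has no content, since that codimension bound is what makes Condition~\ref{cond:dimension_of_B} relevant in the first place.

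A secondary point: you overclaim for $X_2$. The lemma only asserts that the \emph{general} fiber of $X_2\to B$ is a smooth Fano of the stated type, not that every fiber is smooth; Condition~\ref{cond:restriction_surjective} alone secures this via Bertini on a single fiber. Likewise, for $X_1$ you only need to rule out positive-dimensional fibers (finiteness then follows, and flatness by miracle flatness), not to show every fiber consists of $N$ reduced points. Your incidence variety should therefore test positive-dimensionality of fibers of $\gamma_1$ only, not smoothness of all fibers of both maps.

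Your Chern class computation via the splitting principle is correct and in fact gives the simpler closed form $\sum_{k=0}^{2n+\epsilon-1}(-1)^k 2^{2n+\epsilon-1-k}$; the ``standard binomial rearrangement'' needed to match the displayed formula is genuine but should be spelled out. The paper instead reads off $c(\Qc^\vee(2))=(1+2H)^{2n+\epsilon}/(1+3H)$ directly from the twisted Euler sequence on $\PP(\Ec_b)$, which lands immediately on the stated sum.
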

        \begin{proof}
            Take $\gamma_i:=r_i|_{X_i}$. Given any $b\in B$, the expected codimension of the fibers $\gamma_i^{-1}(b)$, being zero loci of the restrictions $\Uc_1^\vee\otimes\wedge^{2n+\epsilon-2}\Qc_1|_{r_1^{-1}(b)} \simeq \Qc_{G(1, \Ec_b)}^\vee(2)$ and  $\Uc_2^\vee\otimes\wedge^2\Uc_2^\vee|_{r_2^{-1}(b)} \simeq \Uc_{G(2, \Ec_b)}^\vee(1)$ is clear, along with the coindex of the general fiber of $\gamma_2$. In particular, by Condition \ref{cond:bpf} the generality of $s$ implies that $\Mc$, $X_1$, $X_2$ are smooth, and by Condition \ref{cond:restriction_surjective} we impose that the general fiber of $\gamma_2$ is smooth of expected dimension, and the general fiber of $\gamma_1$ is a set of $N$ distinct points. To determine $N$, we just need to compute the degree of the top Chern class of $\Qc_{G(1, \Ec_b)}^\vee(2)$. Twisting the dual tautological sequence of $G(1, \Ec_b)$ one has
            \begin{equation}\label{eq:twisted_tautological_sequence_G1n}
                0\arw \Qc_{G(1, \Ec_b)}^\vee(2)\arw \Ec_b\otimes\Oc(2) \arw \Oc(3)\arw 0,
            \end{equation}
            and therefore, if we call $H$ the hyperplane class on $G(1, \Ec_b)$:
            \begin{equation*}
                \begin{split}
                    c(\Qc_{G(1, \Ec_b)}^\vee(2)) & = (1+2H)^{2n+\epsilon}/(1+3H)\\
                    & = \sum_{i=0}^{2n+\epsilon}\binom{2n+\epsilon}{i}(2H)^i\sum_{j=0}^{2n+\epsilon-1}(-3H)^j
                \end{split}
            \end{equation*}
            where the polynomial is truncated at degree $2n +\epsilon -1$ because $H^{2n+\epsilon} = 0$. The degree of the top Chern class is precisely the coefficient of the term of maximal degree, and it can be easily computed to be exactly $N$. Let us now rule out the existence of positive dimensional fibers for $\gamma_1$. Call $\HH$ the space of global sections of $\Qc_{G(1, \Ec_b)}^\vee(2)$. First, consider the variety:
            \begin{equation*}
                \Vc = \{ (b, \sigma)\in B\times \HH : \dim(Z(\sigma_b))>0\}.
            \end{equation*}
            Then, one has the obvious projections
            \begin{equation*}
                \begin{tikzcd}
                    & \Vc \ar[swap]{dl}{pr_1}\ar{dr}{pr_2} & \\
                    B & & \HH.
                \end{tikzcd}
            \end{equation*}
            Hence, by computing the dimension of $\Vc$, one easily sees that for every $b\in B$ the codimension of the space of sections of $\Qc_{G(1, \Ec_b)}^\vee(2)$ with positive dimensional zero loci is equal to the codimension in $B$ of the subset over which $\gamma_1$ has positive dimensional fibers. We can read from the sequence \ref{eq:twisted_tautological_sequence_G1n} that sections of $\Qc_{G(1, \Ec_b)}^\vee(2)$ are elements of the kernel of
            \begin{equation*}
                \begin{tikzcd}[row sep = tiny, column sep = large, /tikz/column 1/.append style={anchor=base east} ,/tikz/column 2/.append style={anchor=base west}]
                    f: H^0(G(1, \Ec_b), \Ec_b\otimes\Oc(2)) \ar[two heads]{r} & H^0(G(1, \Ec_b), \Oc(3))\\
                    (q_1, \dots, q_{2n+\epsilon}) \ar[maps to]{r} & x_1 q_1+\dots + x_{2n+\epsilon}q_{2n+\epsilon}
                \end{tikzcd}
            \end{equation*}
            where $(x_1, \dots, x_{2n+\epsilon})$ are linear maps on the coordinates. A higher dimensional zero locus appears only if the quadrics $q_i$ are either linearly dependent, or are annihilated by a degree one syzygy. The first case is represented by sections lying in the kernel of a morphism
            \begin{equation*}
                (f, \lambda I): H^0(G(1, \Ec_b), \Ec_b\otimes\Oc(2)) \arw H^0(G(1, \Ec_b), \Oc(3)\oplus\Oc(2))
            \end{equation*}
            where $f$ is the contraction with a vector of linear entries, and $\lambda I$ is a nonzero scalar multiple of the identity. The dimension of the space of such sections, therefore, will be equal to the dimension of the kernel of $(f, \lambda I)$ plus the dimension of the space of functions $(f, \lambda I)$. We obtain:
            \begin{equation}\label{eq:number_sections_LD}
                H^0(G(1, \Ec_b), \Qc_{G(1, \Ec_b)}^\vee(2)) - \binom{2n+\epsilon+1}{2} + 2n + \epsilon
            \end{equation}
            On the other hand, sections which share a second linear syzygy are elements of the kernel of
            \begin{equation*}
                (f, f'): H^0(G(1, \Ec_b), \Ec_b\otimes\Oc(2)) \arw H^0(G(1, \Ec_b), \Oc(3)\oplus\Oc(3))
            \end{equation*}
            where $f, f'$ are both contractions with a vector of linear entries. Here the dimension count gives
            \begin{equation}\label{eq:number_sections_with_syzygy}
                H^0(G(1, \Ec_b), \Qc_{G(1, \Ec_b)}^\vee(2)) - \binom{2n+\epsilon+2}{3} + 4n + 2\epsilon -1
            \end{equation}
            By comparing the parameter counts \ref{eq:number_sections_LD} and \ref{eq:number_sections_with_syzygy}, we conclude that for $\dim B < \binom{2n+\epsilon+1}{2} - 2n - \epsilon$, the fibration $\gamma_1$ cannot have positive dimensional fibers, but this is exactly Condition \ref{cond:dimension_of_B}, which we assume to hold.
        \end{proof}
\section{A semiorthogonal decomposition for \texorpdfstring{$X_2$}{something}}\label{sec:derived_cats_cover_and_fibration}
    In this section we produce a semiorthogonal decomposition for $X_2$ containing the derived category of $X_1$. This is done by constructing two semiorthogonal decompositions for $\Mc$ containing, respectively, $\dbcoh(X_1)$ and $\dbcoh(X_2)$ as admissible subcategories, and then mutating the semiorthogonal complements until one has $\dbcoh(X_2)^\perp \subset \dbcoh(X_1)^\perp$. To visualize the mutations, we use a modified version of the ``chess game'' of \cite{thomas_chess_game, leung_xie}.
    \subsection{Two semiorthogonal decompositions for \texorpdfstring{$\Mc$}{something}}\label{sec:sods_for_M}
        Consider the morphism $q_1 :\Mc\arw \Gc r(1, \Ec)$ of relative dimension (generally) $2n+\epsilon-2$, obtained by restricting $p_1$ to $\Mc$. This is an instance of the so-called ``Cayley trick''. Consider the following diagram:
        \begin{equation*}
            \begin{tikzcd}[row sep = huge, column sep = huge]
                E_1\ar{d}{\bar q_1}\ar[hook]{r}{i_1} & \Mc
                \ar{d}{q_1} \\
                X_1 \ar[hook]{r} & \Gc r(1, \Ec)
            \end{tikzcd}
        \end{equation*}
        where $\bar q_1$ is the base change of $q_1$ to $X_1$, and it is the projectivization of the normal bundle of $X_1$ in $\Gc r(1, \Ec)$.
        By \cite{cayleytrick}, we have a semiorthogonal decomposition
        \begin{equation*}
            \dbcoh(\Mc) = \langle i_{1*}\bar q^* \dbcoh(X_1), q_1^* \dbcoh(\Gc r(1, \Ec))\otimes\Lc, \dots, \dbcoh(\Gc r(1, \Ec))\otimes\Lc^{\otimes(2n+\epsilon-2)} \rangle.
        \end{equation*}
        Now, since $\Gc r(1, \Ec) \arw B$ is a projective bundle with fiber $\PP^{2n+\epsilon -1}$, by \cite[Section 2]{orlovblowup} we can write $\dbcoh(\Gc r(1, \Ec))$ as follows:
        \begin{equation}\label{eq:sod_Gr1E}
            \dbcoh(\Gc r(1, \Ec)) = \langle r_1^*\dbcoh(B), r_1^*\dbcoh(B)\otimes\Uc_1^\vee, \dots, r_1^*\dbcoh(B)\otimes(\Uc_1^\vee)^{\otimes(2n+\epsilon-1)}\rangle,
        \end{equation}
        where we used the fact that the list of $r_1$-relatively exceptional objects $\{\Oc, \Uc_1^\vee, \dots, (\Uc_1^\vee)^{\otimes(2n+\epsilon -1)}\}$ restricts to each fiber to the full exceptional collection $\langle\Oc, \Oc(1) \dots, \Oc(2n+\epsilon-1)\rangle$ of $\dbcoh(\PP^{2n+\epsilon-1})$.
        Summing all up, we have:
        \begin{equation}\label{eq:sod_for_M_with_line_bundles}
            \begin{split}
                \dbcoh(\Mc) = \langle & i_{1*}\bar q_1\dbcoh(X_1),\\
                & r_1^*\dbcoh(B)\otimes\Lc, \dots, r_1^*\dbcoh(B)\otimes\Lc\otimes (\Uc_1^\vee)^{\otimes (2n+\epsilon-1)}\\
                & r_1^*\dbcoh(B)\otimes\Lc^{\otimes 2}, \dots, r_1^*\dbcoh(B)\otimes\Lc^{\otimes 2}\otimes (\Uc_1^\vee)^{\otimes (2n+\epsilon-1)}\\
                & \vdots \\
                & r_1^*\dbcoh(B)\otimes\Lc^{\otimes (2n+\epsilon-2)}, \dots, r_1^*\dbcoh(B)\otimes\Lc^{\otimes (2n+\epsilon-2)}\otimes (\Uc_1^\vee)^{\otimes (2n+\epsilon-1)}\rangle
            \end{split}
        \end{equation}
        We can perform the same operations with $\bar p_2: M\arw \Zc_2$. First we use the result of \cite{cayleytrick} to write
        \begin{equation}\label{eq:decomposition_Mc_from_X2}
            \dbcoh(\Mc) = \langle i_{2*}\bar q_2^* \dbcoh(X_2), q_2^* \dbcoh(\Gc r(2, \Ec))\otimes\Lc \rangle.
        \end{equation}
        Then, recall that one has the following semiorthogonal decompositions for $\dbcoh(\Gc r(2, \Ec))$, due to \cite{kuznetsov_isotropic_lines, samokhin}:
        \begin{equation} \label{eq:sod_Gr2E}
            \dbcoh(\Gc r(2, \Ec)) = \left\{
                \begin{array}{cc}
                    \langle \Bc, \dots, \Bc\otimes(\wedge^2\Uc_2^\vee)^{\otimes (n - 2)}, \Ac\otimes(\wedge^2\Uc_2^\vee)^{\otimes (n-1)}, \dots, \Ac\otimes(\wedge^2\Uc_2^\vee)^{\otimes (2n-1)}\rangle & \epsilon = 0 \\
                    \langle \Bc, \dots, \Bc\otimes(\wedge^2\Uc_2^\vee)^{\otimes 2n} \rangle & \epsilon = 1
                \end{array}
            \right.
        \end{equation}
        where $\Ac = \{\Oc, \Uc_2^\vee, \dots, \Sym^{n-2}\Uc_2^\vee\}$ and $\Bc = \{\Ac, \Sym^{n-1}\Uc_2^\vee\}$.
        Summing all up, we will find the following semiorthogonal decomposition for $\epsilon = 1$:
        \begin{equation*}
            \dbcoh(\Mc) = \langle i_{2*}\bar q_2^* \dbcoh(X_2), q_2^*\Bc\otimes\Lc, \dots, q_2^*\Bc\otimes(\wedge^2\Uc_2^\vee)^{\otimes (n-1)}\otimes\Lc \rangle,
        \end{equation*}
        and the following for $\epsilon = 0$:
        \begin{equation*}
            \begin{split}
                \dbcoh(\Mc) = \langle & i_{2*}\bar q_2^* \dbcoh(X_2), q_2^*\Bc\otimes\Lc, \dots, q_2^*\Bc\otimes(\wedge^2\Uc_2^\vee)^{\otimes (n/2 -1)}\otimes\Lc,\\
                & q_2^*\Ac\otimes(\wedge^2\Uc_2^\vee)^{\otimes (n/2)}\otimes\Lc, \dots, q_2^*\Ac\otimes(\wedge^2\Uc_2^\vee)^{\otimes (n-1)}\otimes\Lc \rangle.
            \end{split}
        \end{equation*}
        Thus, we are ready to compare the semiorthogonal complements of $\dbcoh(X_1)$ and $\dbcoh(X_2)$ inside $\dbcoh(\Mc)$. However, the number of components grows wildly with $n$, and writing a sequence of mutations easily becomes a cumbersome task. For this reason, in the next pages (Sections \ref{sec:extending_bundles_from_fiber}, \ref{sec:the_chess_game_rules}) we introduce a ``chess game'' representation of such lists of objects, i.e. a diagrammatic language which allows to visualize mutations in a simple way. The chess game (including its name) is inspired by the works \cite{kuznetsov_hpd, thomas_chess_game, leung_xie}, but with different rules and symbols.
    \subsection{Extending bundles from the fibers}\label{sec:extending_bundles_from_fiber}
        Before introducing our version of the chess game, we need some technical results about mutations of subcategories of $\dbcoh(\Mc)$. We begin by recalling a different description of the flag bundle. Consider a principal $G$-bundle $\Vc\arw B$, where $G= SL(2n+\epsilon)$. Call $P_{1,2}\subset G$ the parabolic subgroup given by the elements of this form:
        \begin{equation*}
            p = \left(
            \begin{array}{ccc}
                \lambda_1 & \times & \times \\
                0 & \lambda_2 & \times \\
                0 & 0 & h
            \end{array}
            \right) \in SL(2n+\epsilon)
        \end{equation*}
        where $\lambda_1, \lambda_2\in \CC^*$, $h\in GL(2n+\epsilon)$ and the $\times$'s denote submatrices on which we impose no condition. One has $F(1, 2, 2n+\epsilon) = G/P_{1,2}$. Then, we can construct a locally trivial fibration $\Fc\arw B$ with fiber $F(1, 2, 2n+\epsilon)$ (i.e. our flag bundle) by taking $\Fc = \Vc\times^G G/P_{1,2}$, where the notation $\times^G$ denotes the quotient of the product by the equivalence relation $(g.t, v)\simeq (t, g.v)$ for all $g\in G$ and $(t, v)\in \Vc\times G/P_{1,2}$. Note that the choice $G= SL(2n+\epsilon)$ is purely motivated by writing the explicit description of the parabolic subgroup of a flag variety of type $A$: on the other hand, the construction of homogeneous vector bundles, together with their extension to generalized flag bundles, works for any choice of $G/P$.\\
        \\
        Let us call $\pi:\Fc\arw B$ the map induced by the structure map $\Vc\arw B$. Then, for every $b\in B$ we have $\pi^{-1}(b)\simeq F(1,2,2n+\epsilon)$. Recall that, given a principal $H$-bundle $\Wc$ over a variety $X$, there is the following exact functor from the category of $H$-modules to the category of vector bundles over $X$ (see \cite[Section 2.2]{nori}, or the survey \cite[Page 8]{balaji_newstad_notes}):
        \begin{equation*}
            \begin{tikzcd}[column sep = huge]
                H-\operatorname{Mod}\ar{r}{\Wc\times^H (-)} &  \operatorname{Vect}(X)
            \end{tikzcd}
        \end{equation*}
        which sends the $H$-module $R$ to the vector bundle $\Wc\times^H R$. In our case, $\Fc = \Vc\times^G G$ is a principal $P_{1,2}$-bundle over $\Zc$, because $\Vc\arw\Vc/P_{1,2}$ is a principal $P_{1,2}$-bundle and $\Vc/P_{1,2}\simeq \Vc\times^G G/P_{1,2}\simeq\Fc$ (see, for instance, \cite[Proposition 3.5]{mitchell_notes_principal_bundles}). This allows to construct an exact functor:
        \begin{equation*}
            \begin{tikzcd}[column sep = huge]
                P_{1,2}-\operatorname{Mod}\ar{r}{\Vc\times^G G\times^{P_{1,2}} (-)} &  \operatorname{Vect}(\Fc).
            \end{tikzcd}
        \end{equation*}
        Moreover, it is well-known that there exists an equivalence of categories
        \begin{equation*}
            \begin{tikzcd}[column sep = huge]
                P_{1,2}-\operatorname{Mod}\ar{r}{\VV_{G,P_{1,2}}} & \operatorname{Vect}^P(G/P_{1,2})
            \end{tikzcd}
        \end{equation*}
        which sends a $P_{1,2}$-module $H$ to the $P_{1,2}$-homogeneous vector bundle $\VV_{G,P_{1,2}}(H) = G\times^{P_{1,2}} H$. In particular, $\VV_{G,P_{1,2}}^{-1}$ is an exact functor. Summing all up we can construct an exact functor $\Fm$ sending homogeneous vector bundles over $G/P_{1,2} = F(1,2,2n+\epsilon)$ to vector bundles over $\Fc$:
        \begin{equation}\label{eq_relativizationfunctor}
            \begin{tikzcd}[column sep = huge, row sep = large]
                \operatorname{Vect}^{P_{1,2}}(G/P) \ar{rr}{\Fm:=\Vc\times^G G\times^{P_{1,2}} (-) \circ \VV_{G,P_{1,2}}^{-1}}\ar[swap]{dr}{\VV_{G,P_{1,2}}^{-1}} & & \operatorname{Vect}(\Fc)\\
                & P_{1,2}-\operatorname{Mod}\ar[swap]{ur}{\Vc\times^G G\times^{P_{1,2}} (-)} &
            \end{tikzcd}
        \end{equation}
        In particular, we have $\Oc(xh_1+yh_2) = \Fm(\Oc(x, y)$ for all $x, y$. Moreover, since $\Uc^\vee$ is homogeneous (although not irreducible), it is easy to see that $\Sym^m\Uc^\vee(x, y) = \Fm(\Sym^m\Uc_{G(2, 2n+\epsilon)}^\vee(x, y))$ for all $x, y$.
    \subsection{The chess game - rules}\label{sec:the_chess_game_rules}
        \subsubsection{Mutations of blocks}\label{sec:arrangement_of_boxes}
            Let us fix the following notation, where we omit pullbacks: $\Oc(xh_1 + yh_2):= (\Uc_1^\vee)^{\otimes x}\otimes(\wedge^2\Uc_2^\vee)^{\otimes y}$. In particular, one has $\Lc = \Oc(h_1+h_2)$. Let us also introduce the subcategories:
            \begin{equation*}
                \begin{split}
                    \mathbf S^m_{x,y}:= & \Sym^m \Uc_2^\vee(xh_1+yh_2)\otimes q_1^*r_1\dbcoh(B) \\
                    \mathbf A^m_{x,y}:= & \langle \mathbf S^0_{x,y}, \mathbf S^1_{x,y}, \dots, \mathbf S^m_{x,y}\rangle.
                \end{split}
            \end{equation*}
            In this language, we enunciate the following technical lemma:
            \begin{lemma}\label{lem:chess_game_exts}
                For $0\leq r\leq\min(t-2, n-1)$ and $r\neq t-1$ one has:
                \begin{equation*}
                    \LL_{\mathbf S^0_{t, 0}}\mathbf S^r_{-1, 1} \simeq \mathbf S^r_{-1, 1}.
                \end{equation*}
                Moreover, for $r = t-1$:
                \begin{equation*}
                    \LL_{\mathbf S^0_{t, 0}}\mathbf S^{t-1}_{-1, 1} \simeq \mathbf S^t_{0,0}.
                \end{equation*}
            \end{lemma}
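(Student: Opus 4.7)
The plan is to convert the mutation into a relative $\Ext$ computation on $\Mc$ pushed forward to $B$. Since $\mathbf S^0_{t,0} = \Oc(th_1)\otimes q_1^*r_1^*\dbcoh(B)$ and $\mathbf S^r_{-1,1} = \Sym^r\Uc_2^\vee\otimes\Oc(-h_1+h_2)\otimes q_1^*r_1^*\dbcoh(B)$, left mutation through $\mathbf S^0_{t,0}$ is controlled, via adjunction along $\pi_\Mc := r_1\circ q_1$, by the complex
\begin{equation*}
\Pc = R(\pi_\Mc)_*\left(\Sym^r\Uc_2^\vee\otimes\Oc(-(t+1)h_1+h_2)|_\Mc\right)
\end{equation*}
on $B$. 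If $\Pc = 0$ the evaluation map in the mutation triangle vanishes and $\LL_{\mathbf S^0_{t,0}}\mathbf S^r_{-1,1}\simeq\mathbf S^r_{-1,1}$; otherwise $\LL_{\mathbf S^0_{t,0}}\mathbf S^r_{-1,1}$ is the cone of that evaluation, and identifying this cone is what should produce $\mathbf S^t_{0,0}$ in the boundary case.

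Next I would invoke the Koszul resolution $\Oc_\Mc \simeq [\Oc_{\Fc l(1,2,\Ec)}(-h_1-h_2)\to\Oc_{\Fc l(1,2,\Ec)}]$ to rewrite $\Pc$ as the cone of a morphism
\begin{equation*}
R(\pi_{\Fc l})_*(\Sym^r\Uc_2^\vee\otimes\Oc(-(t+2)h_1))\;\longrightarrow\;R(\pi_{\Fc l})_*(\Sym^r\Uc_2^\vee\otimes\Oc(-(t+1)h_1+h_2))
\end{equation*}
on $B$, where $\pi_{\Fc l}:\Fc l(1,2,\Ec)\to B$ is the structure map. Factoring $\pi_{\Fc l} = r_1\circ p_1$ and applying the filtration of $\Sym^r\Uc_2^\vee$ induced by the short exact sequence $0\to(\Uc_2/\Uc_1)^\vee\to\Uc_2^\vee\to\Uc_1^\vee\to 0$, both pushforwards reduce to sums of pushforwards of line bundles $\Oc((r-2k)h_1+kh_2)$ for $k=0,\dots,r$, tensored with the prescribed $(a,b)$-twists. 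By the homogeneity constructions of Section \ref{sec:extending_bundles_from_fiber}, in particular the relativization functor $\Fm$, these pushforwards can be computed fiberwise by Borel--Weil--Bott on $F(1,2,2n+\epsilon)$ and then extended relatively over $B$.

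The actual verification then consists in checking, for each filtration piece $\Oc(\alpha h_1+\beta h_2)$ with $(\alpha,\beta) = (r-2k+a,\,k+b)$ and $(a,b)\in\{(-t-2,0),\,(-t-1,1)\}$, whether the weight $(\alpha,\beta)$ shifted by $\rho$ is regular. The hypothesis $0\leq r\leq\min(t-2,n-1)$ pins these weights in a range where the overwhelming majority of shifted weights are singular and hence contribute nothing. For $r\neq t-1$, I expect every surviving contribution on one side of the Koszul triangle to pair with a matching contribution on the other side, so that the cone vanishes and the first isomorphism follows.

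The main obstacle will be the boundary case $r = t-1$: here I expect one weight coming from the $(a,b) = (-t-1,1)$ term to become regular while its Koszul partner on the $(a,b) = (-t-2,0)$ side remains singular. After applying the unique Weyl element making the surviving weight dominant, the result should be, up to a shift, the homogeneous bundle corresponding to $\Sym^t\Uc_2^\vee$; feeding it back into the mutation triangle then produces the identification $\LL_{\mathbf S^0_{t,0}}\mathbf S^{t-1}_{-1,1}\simeq\mathbf S^t_{0,0}$. Some care will be required throughout to keep track of projection-formula twists by pullbacks from $B$ and to check that the weight analysis goes through uniformly for both parities $\epsilon\in\{0,1\}$.
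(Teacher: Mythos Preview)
Your overall framework matches the paper's: reduce the mutation to the relative $\Ext$ complex $\Pc$ on $B$ via adjunction, resolve by the Koszul complex of $\Mc$, split $\Sym^r\Uc_2^\vee$ into its line-bundle graded pieces, and compute each piece fiberwise by Borel--Weil--Bott. For $r\le t-2$ this works, though in fact both Koszul terms vanish outright (cf.\ Lemma~\ref{lem:main_coh_lemma}) rather than merely cancelling against each other.

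The genuine gap is in the boundary case $r=t-1$. What Borel--Weil--Bott actually produces there is \emph{not} $\Sym^t\Uc_2^\vee$. The unique surviving weight (coming from the top filtration step $\alpha=r$ in the notation of Lemma~\ref{lem:main_coh_lemma}) becomes dominant after a single simple reflection and lands on the \emph{trivial} weight, so the fiberwise cohomology is $\CC[-1]$ and $\Pc$ is a shifted line bundle on $B$. Consequently $ff^!(\,\cdot\,)$ is, up to a twist pulled back from $B$, just $\Oc(th_1)[-1]$, and the mutation triangle rotates to a genuine short exact sequence on $\Mc$:
\begin{equation*}
0\longrightarrow \Sym^{t-1}\Uc_2^\vee(-h_1+h_2)\longrightarrow \LL_{\mathbf S^0_{t,0}}(\,\cdot\,)\longrightarrow \Oc(th_1)\longrightarrow 0.
\end{equation*}
The identification of the middle term with $\Sym^t\Uc_2^\vee$, hence with an object of $\mathbf S^t_{0,0}$, does not come from the weight computation at all; it comes from recognising this extension as (a twist of) the tautological sequence of Corollary~\ref{lem:symresolution_flag_bundle}. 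That recognition step is absent from your proposal, and without it the second isomorphism does not follow.
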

            \begin{proof}
                Let us start by considering the following adjoint pair of functors:
                \begin{equation*}
                    \begin{split}
                        f: & E\longmapsto \rho^*E\otimes\iota^*\Oc(th_1)\\
                        f^! : & R \longmapsto \rho_* R\Hc om_\Mc(\iota^*\Oc(th_1), R)
                    \end{split}
                \end{equation*}
                where $f$ is a fully faithful embedding of $\dbcoh(B)$ in $\dbcoh(\Mc)$), the ``$!$'' symbol denotes the right adjoint, $\iota:\Mc\arw\Fc l(1,2,2n+\epsilon)$ is the embedding as a hypersurface and $\rho :=r_i\circ p_i|_{\Mc}$. Now, any object in $\mathbf S^r_{-1, 1}$ has the form $\iota^*\Sym^r\Uc_2^\vee(-h_1+h_2)\otimes\rho^* E$ for some $E\in\dbcoh(B)$, and its mutation through $\mathbf S^0_{t, 0}$is defined by the distinguished triangle
                \begin{equation}\label{eq:mutation_distinguished_triangle}
                    \begin{split}
                        ff^!(\iota^*\Sym^r\Uc_2^\vee(-h_1+h_2)\otimes\rho^* E) \arw \iota^*\Sym^r\Uc_2^\vee(-h_1+h_2)\otimes\rho^* E \arw \\
                        \arw \LL_{\mathbf S^0_{t, 0}}(\iota^*\Sym^r\Uc_2^\vee(-h_1+h_2)).
                    \end{split}
                \end{equation}
                Then, by adjunction:
                \begin{equation}\label{eq:mutations_intermediate_step}
                    \begin{split}
                        f^!(\iota^*\Sym^r\Uc_2^\vee(-h_1+h_2)\otimes\rho^* E) \simeq \hspace{150pt}\\
                        \simeq \rho_* R\Hc om_\Mc(\iota^*\Oc(th_1), \iota^*\Sym^r\Uc_2^\vee(-h_1+h_2)\otimes\rho^* E)\\
                        \simeq \rho_* \iota^* R\Hc om_{\Fc l(1, 2, \Ec)}(\Oc(th_1), \iota_*\iota^*\Sym^r\Uc_2^\vee(-h_1+h_2)\otimes\rho^* E)\\
                        \simeq r_{i*}p_{i*}\iota_* \iota^* R\Hc om_{\Fc l(1, 2, \Ec)}(\Oc(th_1), \iota_*\iota^*\Sym^r\Uc_2^\vee(-h_1+h_2)\otimes\rho^* E).
                    \end{split}
                \end{equation}
                Let us focus our attention on the term $R\Hc om_{\Fc l(1, 2, \Ec)}(\Oc(th_1), \iota_*\iota^*\Sym^r\Uc_2^\vee(-h_1+h_2)\otimes\rho^* E)$. For any $b\in B$ one has
                \begin{equation*}
                    \begin{split}
                        (r_{i*}p_{i*}R\Hc om_{\Fc l(1, 2, \Ec)}(\Oc(th_1), \iota_*\iota^*\Sym^r\Uc_2^\vee(-h_1+h_2)\otimes\rho^* E))_b \simeq \\
                        \simeq H^\bullet((r_i\circ p_i)^{-1}(b), \iota_*\iota^*\Sym^r\Uc_2^\vee(-(1+t)h_1+h_2)\otimes\rho^* E|_{(r_i\circ p_i)^{-1}(b)})\\
                        \simeq H^\bullet(F(1, 2, 2n+\epsilon), i_*i^*\Sym^r\Uc_{G(2, 2n+\epsilon)}^\vee(-1-t, 1))\\
                        \simeq \Hom_M^\bullet(\Oc(t, 0), \Sym^r\Uc_{G(2, 2n+\epsilon)}^\vee(-1, 1))
                    \end{split}
                \end{equation*}
                where $i$ is the embedding of $M$ in $F(1, 2, 2n+\epsilon)$. If $r\neq t-1$ the latter is zero by Lemma \ref{lem:main_coh_lemma}, and hence $R\Hc om_{\Fc l(1, 2, \Ec)}(\Oc(th_1), \iota_*\iota^*\Sym^r\Uc_2^\vee(-h_1+h_2)\otimes\rho^* E)$ vanishes identically. Therefore, the third term in the triangle \ref{eq:mutation_distinguished_triangle} is the cone over the zero map, and hence it will be isomorphic to the second one, proving the first part of the claim. Let us now consider the case $r = t-1$. Back to Equation \ref{eq:mutations_intermediate_step}, we can further manipulate such expression obtaining:
                \begin{equation*}
                    \begin{split}
                        f^!(\iota^*\Sym^r\Uc_2^\vee(-h_1+h_2)\otimes\rho^* E) & \simeq r_{i*}p_{i*} \iota_*\iota^* \Sym^r\Uc_2^\vee(-(1+t)h_1+h_2)\otimes\rho^* E)\\
                    \end{split}
                \end{equation*}
                Let us resolve $\iota_*\iota^*\Sym^r\Uc_2^\vee(-h_1+h_2)\otimes\rho^* E$ by the Koszul resolution of $\Mc$ (up to twists by pullbacks from $B$):
                \begin{equation*}
                    \begin{split}
                        0\arw \Sym^r\Uc_2^\vee(-(2+t)h_1)\otimes\rho^* E \arw  \Sym^r\Uc_2^\vee(-(1+t)h_1+h_2)\otimes\rho^* E)\arw \\
                        \arw \iota_*\iota^* \Sym^r\Uc_2^\vee(-(1+t)h_1+h_2)\otimes\rho^* E \arw 0
                    \end{split}
                \end{equation*}
                Here, by \ref{lem:main_coh_lemma} and the same argument as above, we have for all $b$:
                \begin{equation*}
                    \begin{split}
                        (r_{i*}p_{i*} \Sym^{t-1}\Uc_2^\vee(-(2+t)h_1)\otimes\rho^* E)_b & = 0\\
                        (r_{i*}p_{i*} \Sym^{t-1}\Uc_2^\vee(-(1+t)h_1+h_2)\otimes\rho^* E)_b & \simeq \CC[-1]
                    \end{split}
                \end{equation*}
                which tells us that $r_{i*}p_{i*} \iota_*\iota^* \Sym^r\Uc_2^\vee(-(1+t)h_1+h_2)\otimes\rho^* E)$ is a line bundle on $B$ shifted by $-1$. Hence, up to twists by pullbacks of line bundles on $B$, one has $ff^!(\iota^*\Sym^r\Uc_2^\vee(-h_1+h_2)\otimes\rho^* E) \simeq  \rho^* E\otimes\Oc(t h_1)$, and therefore the triangle \ref{eq:mutation_distinguished_triangle} reduces to a short exact sequence where $\LL_{\mathbf S^0_{t, 0}}(\iota^*\Sym^r\Uc_2^\vee(-h_1+h_2))$ is the extension. Such sequence is a twist of \ref{eq:symresolution_flag}, and hence we conclude the proof of the claim.
            \end{proof}
            \begin{lemma}\label{lem:chess_game_exts_II}
                For $0\leq m\leq r\leq n-2$ one has:
                \begin{equation*}
                \begin{split}
                    \LL_{\mathbf S^m_{0, 0}}\mathbf S^r_{-1, 1} & \simeq \mathbf S^r_{-1, 1} \\
                    \RR_{\mathbf S^r_{-1, 1}}\mathbf S^m_{0, 0} & \simeq \mathbf S^m_{0, 0}.
                \end{split}
                \end{equation*}
            \end{lemma}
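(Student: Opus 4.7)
The plan is to reduce both statements of the lemma to a single $\Hom$-vanishing and then invoke the adjunction/base-change strategy already developed in the proof of Lemma~\ref{lem:chess_game_exts}. The key observation is that, tracing through the mutation conventions, both $\LL_{\mathbf S^m_{0,0}}\mathbf S^r_{-1,1} \simeq \mathbf S^r_{-1,1}$ and $\RR_{\mathbf S^r_{-1,1}}\mathbf S^m_{0,0} \simeq \mathbf S^m_{0,0}$ follow at once from the semiorthogonality
\[
\Hom_\Mc^\bullet\bigl(\mathbf S^m_{0,0},\ \mathbf S^r_{-1,1}\bigr) = 0 \qquad \text{for } 0 \le m \le r \le n-2,
\]
since in both cases the relevant mutation triangle degenerates as soon as the right adjoint $f^!$ of the embedding $E \mapsto \rho^*E\otimes\iota^*\Sym^m\Uc_2^\vee$ (for the left mutation), respectively the left adjoint $g^*$ of the embedding $E\mapsto\rho^*E\otimes\iota^*\Sym^r\Uc_2^\vee(-h_1+h_2)$ (for the right mutation), vanishes on the relevant object.

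To verify this vanishing, I would run the same sequence of adjunctions, projection formula, and base change to a fiber of $\rho = r_i\circ p_i|_\Mc$ that led to~\eqref{eq:mutations_intermediate_step}. Writing a typical object of $\mathbf S^m_{0,0}$ as $\iota^*\Sym^m\Uc_2^\vee \otimes \rho^* E'$ and of $\mathbf S^r_{-1,1}$ as $\iota^*\Sym^r\Uc_2^\vee(-h_1+h_2)\otimes \rho^*E$, the relative $\Hom$-complex over $B$ has fiber at $b$ equal to
\[
\Hom^\bullet_{F(1,2,2n+\epsilon)}\bigl(\Sym^m\Uc^\vee,\ i_*i^*\Sym^r\Uc^\vee(-1,1)\bigr),
\]
where $\Uc = \Uc_{G(2,2n+\epsilon)}$ and $i : M \hookrightarrow F(1,2,2n+\epsilon)$ is the fiberwise embedding of $\Mc$. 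Resolving $i_*i^*$ with the Koszul complex of $M$ replaces this by two cohomology groups on the flag variety involving $\Sym^m\Uc^\vee \otimes \Sym^r\Uc$ twisted by two specific bidegrees.

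The remaining step is Borel--Weil--Bott. Using the rank-$2$ identification $\Sym^r\Uc \simeq \Sym^r\Uc^\vee \otimes (\wedge^2\Uc)^{\otimes r}$ and the Pieri rule for two symmetric powers of a rank-$2$ bundle, each of these cohomologies splits as a direct sum of cohomologies of homogeneous bundles of the form $\Sym^{m+r-2k}\Uc^\vee(c,d)$ with $k=0,\dots,m$. I would then match each summand against the entries already gathered in Lemma~\ref{lem:main_coh_lemma}. The hard part will be the bookkeeping of weights: one must check that the shift from the Koszul step combined with the $(-1,1)$-twist keeps every relevant weight inside a singular Weyl chamber throughout the full range $0\le m\le r\le n-2$. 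That this is precisely the case is what dictates the hypothesis $r\le n-2$; once verified, both isomorphisms in the lemma follow immediately.
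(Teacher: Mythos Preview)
Your reduction is exactly what the paper does: both mutation statements amount to the single semiorthogonality $\Hom_\Mc^\bullet(\mathbf S^m_{0,0},\mathbf S^r_{-1,1})=0$, and the adjunction/base-change argument of Lemma~\ref{lem:chess_game_exts} brings this down to the fiberwise vanishing $\Ext^\bullet_M(\Sym^m\Uc^\vee,\Sym^r\Uc^\vee(-1,1))=0$ on a general $(1,1)$-section $M\subset F(1,2,2n+\epsilon)$.

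Where you diverge from the paper is in the last step. You propose to expand $\Sym^m\Uc\otimes\Sym^r\Uc^\vee$ via Pieri and run Borel--Weil--Bott on each summand; that is certainly feasible, but your suggestion to ``match each summand against the entries already gathered in Lemma~\ref{lem:main_coh_lemma}'' does not quite work, since that lemma only treats the case $m=0$ (the source is a line bundle $\Oc(t,0)$), so you would in fact need fresh BWB computations for $m\ge 1$. The paper avoids this bookkeeping entirely via Lemma~\ref{lem:the_ext_for_the_second_part_of_rule_two}: after the Koszul step, one of the two terms on $F(1,2,2n+\epsilon)$ dies immediately because its pushforward along $p_1$ contains a factor $p_{1*}\Oc(-h_1)=0$, and the other term is handled by Serre duality, which converts it into a $\Hom$ between two objects of Kuznetsov's full exceptional collection on $G(2,2n+\epsilon)$ whose vanishing is a built-in semiorthogonality. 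Your approach would work but is more laborious; the paper's trick of leveraging the known collection on $G(2,2n+\epsilon)$ is what makes the bound $r\le n-2$ transparent without any weight chasing.
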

            \begin{proof}
                The argument follows the same exact steps of the proof of Lemma \ref{lem:chess_game_exts}. In particular, with the same local analysis, we see that the relevant Ext computations boil down to terms of the form:
                \begin{equation*}
                    \Ext^\bullet(\Sym^m\Uc_{G(2, 2n+\epsilon)}^\vee, \Sym^r\Uc_{G(2, 2n+\epsilon)}^\vee),
                \end{equation*}
                which have no cohomology by Lemma \ref{lem:the_ext_for_the_second_part_of_rule_two}.
            \end{proof}
            \begin{corollary}\label{cor:rule2}
                For $r+1\leq t\leq n-2$ there is a sequence of mutations realizing the following equality:
                \begin{equation}\label{eq:rule_two}
                    \begin{split}
                        \langle \mathbf S^0_{0, 0}, \dots, \mathbf S^0_{t, 0}, \mathbf A^{r-1}_{-1, 1}\rangle = \langle \mathbf A^{r-1}_{-1, 1}, \mathbf A^{r-1}_{0, 0}, \mathbf S^0_{r, 0},\dots, \mathbf S^0_{t, 0},\rangle 
                    \end{split}
                \end{equation}
            \end{corollary}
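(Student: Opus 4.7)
The plan is to realise the equality through an explicit cascade of left mutations applied to the block $\mathbf A^{r-1}_{-1,1}$ appearing on the right of the LHS, organised in two phases. The first phase uses only Lemma~\ref{lem:chess_game_exts}, while the second relies on Lemma~\ref{lem:chess_game_exts_II} together with the short exact sequence induced by the dual tautological sequence of the Grassmann bundle.

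In the \emph{sliding phase}, I would process $i=0,1,\ldots,r-1$ in order, mutating each $\mathbf S^i_{-1,1}$ leftward past the adjacent $\mathbf S^0_{j,0}$'s. Lemma~\ref{lem:chess_game_exts} tells us that each mutation past $\mathbf S^0_{j,0}$ with $j\geq i+2$ is trivial, while the critical step at $j=i+1$---which is reached before $j=0$ thanks to the hypothesis $r+1\leq t$---replaces $\mathbf S^i_{-1,1}$ by $\mathbf S^{i+1}_{0,0}$, now situated immediately to the left of $\mathbf S^0_{i+1,0}$. At the end of this phase the SOD has the interleaved form
\[
    \langle \mathbf S^0_{0,0},\mathbf S^1_{0,0},\mathbf S^0_{1,0},\mathbf S^2_{0,0},\mathbf S^0_{2,0},\ldots,\mathbf S^r_{0,0},\mathbf S^0_{r,0},\mathbf S^0_{r+1,0},\ldots,\mathbf S^0_{t,0}\rangle.
\]

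In the \emph{reorganising phase}, the dual tautological sequence $0\to\Oc(-h_1+h_2)\to\Uc_2^\vee\to\Oc(h_1)\to 0$, suitably twisted by symmetric powers of $\Uc_2^\vee$, expresses each freshly-created $\mathbf S^k_{0,0}$ as an extension built from $\mathbf S^{k-1}_{-1,1}$ and $\mathbf S^0_{k,0}$. Combined with the trivial commutations $\LL_{\mathbf S^m_{0,0}}\mathbf S^r_{-1,1}\simeq\mathbf S^r_{-1,1}$ and $\RR_{\mathbf S^r_{-1,1}}\mathbf S^m_{0,0}\simeq\mathbf S^m_{0,0}$ provided by Lemma~\ref{lem:chess_game_exts_II}, these extensions allow one to (i) peel off each $\mathbf S^i_{-1,1}$ and collect them at the far left to form $\mathbf A^{r-1}_{-1,1}$, and (ii) regroup the remaining $\mathbf S^m_{0,0}$'s into $\mathbf A^{r-1}_{0,0}$, leaving $\mathbf S^0_{r,0},\ldots,\mathbf S^0_{t,0}$ as a tail. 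The main obstacle is the careful orchestration of this reorganising phase, since every intermediate mutation must be licensed by one of the two lemmas or by a Koszul-type short exact sequence, and the order of the elementary moves matters for the semiorthogonality conditions to remain intact; this is precisely the combinatorial bookkeeping that the chess game of Section~\ref{sec:the_chess_game_rules} is designed to handle.
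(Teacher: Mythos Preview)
Your proposal is correct and follows essentially the same approach as the paper. Your sliding phase reproduces exactly the paper's first two moves (slide each $\mathbf S^m_{-1,1}$ next to $\mathbf S^0_{m+1,0}$ via the trivial mutations of Lemma~\ref{lem:chess_game_exts}, then perform the nontrivial left mutation converting it to $\mathbf S^{m+1}_{0,0}$), and your reorganising phase is the paper's final ``reorder via Lemma~\ref{lem:chess_game_exts_II}'' step; your explicit invocation of the tautological short exact sequence to re-extract the $\mathbf S^{i}_{-1,1}$ pieces is a welcome clarification of a point the paper leaves implicit, since after the conversion step no blocks of shape $\mathbf S^{\bullet}_{-1,1}$ are literally present and Lemma~\ref{lem:chess_game_exts_II} alone does not recreate them.
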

            \begin{proof}
                By Lemma \ref{lem:chess_game_exts} we can move each block of the shape $\mathbf S^m_{-1, 1}$ immediately to the right of $\mathbf S^0_{m+1, 0}$. Then, we apply Lemma \ref{lem:chess_game_exts} again, mutating the former a further step to the left, where it becomes $\mathbf S^{m+1}_{0,0}$ The last step consists in reordering the blocks so that we obtain the RHS of Equation \ref{eq:rule_two}: this can be done thanks to Lemma \ref{lem:chess_game_exts_II}.
            \end{proof}
            \subsubsection{Arrangement of boxes and semiorthogonal decompositions}
                Hereafter we describe an exceptional collection made of blocks as above by means of an arrangement of boxes, each of them containing a number. The number in a box represents the maximum symmetric power appearing in the associated block, while the position of the box in the table corresponds to the twist, To identify the overall twist, the box corresponding to the twist by $(0,0)$ is grayed out. There are no morphisms from the right to the left in the same row, and from the bottom to the top regardless of the row. For example, one has:
            \begin{equation*}
                \langle \mathbf A ^a_{0,0}, \mathbf A ^b_{1,0}, \mathbf A ^c_{2,0}, \mathbf A ^d_{3,0}, \mathbf A ^e_{1,1}, \mathbf A ^f_{2,1}, \mathbf A ^g_{3,1}, \mathbf A ^h_{4,1}\rangle \hspace{5pt} = \hspace{5pt} 
                \ytableausetup{ boxsize=2em}
            \begin{ytableau} *(lightgray) \scriptstyle a & \scriptstyle b & \scriptstyle c & \scriptstyle d \cr 
                            \scriptstyle e & \scriptstyle f & \scriptstyle g & \scriptstyle h 
            \end{ytableau}
            \end{equation*}
            
            The advantage of this notation is that it allows to visualize useful mutations, even if the semiorthogonal decomposition is exceptionally cumbersome to write.
            
            \begin{lemma}\label{lem:rule1}
                Consider the following diagram:
                
                \begin{equation*}
                    \ytableausetup{ boxsize=2em}
                \begin{ytableau}
                    \none & *(lightgray) \scriptstyle 0& \scriptstyle 0 & \none[\dots] & \scriptstyle 0 \cr 
                    \scriptstyle r-1 
                \end{ytableau}
                \end{equation*}

                where the length $l$ of the first row of zeros satisfies $r\leq l\leq 2n-3+\epsilon$, and $r\leq n-2$ holds. Then, one has:
                
                \begin{equation*}
                    \ytableausetup{ boxsize=2em}
                    \begin{ytableau}
                        \none & *(lightgray) \scriptstyle 0& \scriptstyle 0 & \none[\dots] & \scriptstyle 0 \cr 
                        \scriptstyle r-1 
                    \end{ytableau}
                \hspace{5pt}=
                    \ytableausetup{ boxsize=2em}
                    \begin{ytableau}
                        \none & *(lightgray) \scriptstyle r& \scriptstyle 0 & \none[\dots] & \scriptstyle 0 
                    \end{ytableau}
                \end{equation*}
                
            \end{lemma}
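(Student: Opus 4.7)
The plan is to unpack the two diagrams as the semiorthogonal decomposition identity
\begin{equation*}
    \langle \mathbf S^0_{0,0}, \mathbf S^0_{1,0}, \ldots, \mathbf S^0_{l-1,0}, \mathbf A^{r-1}_{-1,1}\rangle = \langle \mathbf A^r_{0,0}, \mathbf S^0_{1,0}, \ldots, \mathbf S^0_{l-1,0}\rangle,
\end{equation*}
and to prove it by induction on $r$, with Lemma \ref{lem:chess_game_exts} as the main workhorse. In the base case $r=1$, I would mutate the sole tail block $\mathbf S^0_{-1,1}$ leftward across the top row of zeros: the first case of Lemma \ref{lem:chess_game_exts} (valid for $t\geq 2$) keeps it fixed as it crosses $\mathbf S^0_{l-1,0}, \ldots, \mathbf S^0_{2,0}$, and the second case at $t=1$ converts it into $\mathbf S^1_{0,0}$ upon crossing $\mathbf S^0_{1,0}$, matching the RHS.

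For the inductive step I would write $\mathbf A^{r-1}_{-1,1} = \langle \mathbf A^{r-2}_{-1,1}, \mathbf S^{r-1}_{-1,1}\rangle$ and apply the inductive hypothesis to the prefix $\langle \mathbf S^0_{0,0}, \ldots, \mathbf S^0_{l-1,0}, \mathbf A^{r-2}_{-1,1}\rangle$ (whose hypotheses $r-1\leq l$ and $r-1\leq n-2$ follow automatically from ours), rewriting it as $\langle \mathbf A^{r-1}_{0,0}, \mathbf S^0_{1,0}, \ldots, \mathbf S^0_{l-1,0}\rangle$. The same mutation pattern used in the base case, now applied to the surviving $\mathbf S^{r-1}_{-1,1}$, carries it unchanged through $\mathbf S^0_{l-1,0}, \ldots, \mathbf S^0_{r+1,0}$ by the first case of Lemma \ref{lem:chess_game_exts} (the conditions $r-1\leq t-2$ and $r-1\neq t-1$ both hold for $t\geq r+1$), and converts it into $\mathbf S^r_{0,0}$ upon crossing $\mathbf S^0_{r,0}$ by the second case at $t=r$.

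At this stage the sod reads $\langle \mathbf A^{r-1}_{0,0}, \mathbf S^0_{1,0}, \ldots, \mathbf S^0_{r-1,0}, \mathbf S^r_{0,0}, \mathbf S^0_{r,0}, \ldots, \mathbf S^0_{l-1,0}\rangle$, so the final task is to slide $\mathbf S^r_{0,0}$ leftward past $\mathbf S^0_{r-1,0}, \ldots, \mathbf S^0_{1,0}$ so that it merges with $\mathbf A^{r-1}_{0,0}$ into $\mathbf A^r_{0,0}$, yielding the RHS. Each of these steps is a trivial mutation exactly when the relative $\Ext$-groups between the two blocks vanish; by the same adjunction-and-Koszul reduction that drives the proof of Lemma \ref{lem:chess_game_exts}, this reduces to showing that $\Ext^\bullet_{F(1,2,V)}(\Oc(jh_1), \Sym^r \Uc_{G(2,V)}^\vee)$, together with the twisted counterpart coming from the Koszul resolution of $M$, vanishes for $1\leq j\leq r-1$. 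I expect this Borel--Weil--Bott computation to be the main obstacle of the argument; the required vanishings will be collected in Appendix \ref{app:borel_weil_bott}.
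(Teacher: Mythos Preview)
Your approach is essentially the paper's: both use Lemma \ref{lem:chess_game_exts} to slide each $\mathbf S^m_{-1,1}$ along the row of zeros until it converts to $\mathbf S^{m+1}_{0,0}$, and then invoke a Borel--Weil--Bott vanishing to reorder the resulting $\mathbf S^{>0}_{0,0}$ blocks past the remaining $\mathbf S^0_{j,0}$; the only difference is that you package this as an induction on $r$ while the paper performs all the slides at once. The BWB vanishing you anticipate is exactly Lemma \ref{lem:reordering_the_block} of the appendix. One small point: your row runs over positions $0,\dots,l-1$, but the paper's convention (and the hypothesis $r\le l$) has it running to position $l$; with your indexing the boundary case $r=l$ fails because $\mathbf S^0_{r,0}$ is absent, so adjust the endpoint accordingly.
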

            
            \begin{proof}
                The first diagram describes the following semiorthogonal decomposition:
                
                \begin{equation}
                    \begin{split}
                        \langle  \mathbf A_{0,0} ^0,  \mathbf A ^0_{1,0}, \dots,  \mathbf A ^0_{l,0},  \mathbf A ^{r-1}_{-1,1}\rangle = \langle & \mathbf S^0_{0,0}, \mathbf S^0_{1,0}, \dots\dots\dots\dots\dots, \mathbf S^0_{l,0}, \\
                        & \mathbf S^0_{-1, 1}, \mathbf S^1_{-1,1}, \dots\dots\dots\dots, \mathbf S^{r-1}_{-1,1} \rangle.
                    \end{split}
                \end{equation}
                In light of Lemma \ref{lem:chess_game_exts}, $\mathbf S^0_{-1, 1}$ can be moved to the immediate right of $\mathbf S^0_{1, 0}$. Similarly, for $1\leq m\leq r-1$, we can move the subcategory $\mathbf S^m_{-1, 1}$ right after $\mathbf S^0_{m+1, 0}$. What we get is:
                
                \begin{equation}\label{eq:middlestepmutationslemma}
                    \begin{split}
                        \langle  \mathbf A_{0,0} ^0,  \mathbf A ^0_{1,0}, \dots,  \mathbf A ^0_{l,0},  \mathbf A ^{r-1}_{-1,1}\rangle =\langle & \mathbf S^0_{0,0}, \\
                        &\mathbf S^0_{1,0}, \mathbf S^0_{-1, 1}, 
                        \mathbf S^0_{2,0}, \mathbf S^1_{-1,1},
                        \dots\dots, 
                        \mathbf S^0_{r, 0}, \mathbf S^{r-1}_{-1,1},\\
                        &\mathbf S^0_{r+1, 0}, \dots\dots\dots\dots\dots\dots\dots\dots\dots,  \mathbf S^0_{l,0} \rangle.
                    \end{split}
                \end{equation}
                
                Let us now mutate $\mathbf S^m_{-1,1}$ through the subcategory $\mathbf S^0_{m+1,0}$ for every $m$. We obtain $\LL_{\mathbf S^0_{m+1,1}}\mathbf S^m_{-1,1} = \mathbf S^{m+1}_{0,0}$, and our decomposition becomes:
                \begin{equation*}
                    \langle  \mathbf A_{0,0} ^0,  \mathbf A ^0_{1,0}, \dots,  \mathbf A ^0_{l,0},  \mathbf A ^{k-1}_{-1,1}\rangle = 
                    \langle \mathbf S^0_{0,0}, \mathbf S^1_{0,0}, \mathbf S^0_{1,0}, \mathbf S^2_{0,0},\dots, \mathbf S^k_{0,0}, \mathbf S^0_{k+1,0}, \dots, \mathbf S^0_{l,0}\rangle.
                \end{equation*}
                Observe that by Lemma \ref{lem:reordering_the_block} and the same local computation we did for proving Lemma \ref{lem:chess_game_exts}, we can move all the blocks $\mathbf S^0_{x,y}$ (except for the first one) to the right of all blocks $\mathbf S^{>0}_{x',y'}$. This gives the decomposition 
                \begin{equation*}
                    \langle  \mathbf A_{0,0} ^0,  \mathbf A ^0_{1,0}, \dots,  \mathbf A ^0_{l,0},  \mathbf A ^{k-1}_{-1,1}\rangle = 
                    \langle \mathbf S^0_{0,0}, \mathbf S^1_{0,0}, \dots, \mathbf S^k_{0,0}, \mathbf S^0_{1,0},\dots, \mathbf S^0_{l,0}\rangle,
                \end{equation*}
                which is the one depicted in the second diagram, concluding the proof.
            \end{proof}
    \subsection{The chess game - mutations for the even case}\label{sec:chess_game_fibrations_even}
    In this section we describe the mutations we need to perform to prove Theorem \ref{thm:main_theorem_fibrations_intro} for the ``even'' case, i.e. $\epsilon = 0$. In the notation of Section \ref{sec:arrangement_of_boxes}, we can rewrite the decomposition \ref{eq:sod_for_M_with_line_bundles} as:
    \begin{equation*}
        \begin{split}
            \dbcoh(\Mc) = \langle  i_{1*}\bar q_1\dbcoh(X_1), & \mathbf A^0_{-n,1}, \mathbf A^0_{-n+1,1}, \dots\dots\dots, \mathbf A^0_{n-1,1} \\
                                    & \mathbf A^0_{-n+1,2}, \mathbf A^0_{-n+3,2}, \dots\dots, \mathbf A^0_{n,2} \\
                                    & \hspace{20pt} \vdots \hspace{100pt} \vdots \\
                                    & \mathbf A^0_{n-2,2n-1}, \mathbf A^0_{n-1,2n-1}, \dots, \mathbf A^0_{n,2n+3} \rangle\\
        \end{split}
    \end{equation*}
    
    As we discussed above in Section \ref{sec:the_chess_game_rules}, $\dbcoh(X_1)^\perp$ can be rewritten as an arrangement of blocks, or a ``chessboard''. In the remainder of this section, we choose $n = 6$ to depict the chessboard moves we perform, while the argument itself will be presented in full generality.
    
    \begin{adjustbox}{width=.8\textwidth,center}
        \ytableausetup{boxsize=1.5em}
        \begin{ytableau} 
        \none & \none & \none & \none & \none & \none &  *(lightgray) \cr 
        \scriptstyle  0 &\scriptstyle 0  &\scriptstyle 0  &\scriptstyle 0  &\scriptstyle 0  &\scriptstyle 0  &\scriptstyle 0  &\scriptstyle 0  &\scriptstyle 0  &\scriptstyle 0  &\scriptstyle 0  &\scriptstyle 0  &\none &\none &\none &\none &\none &\none &\none &\none &\none &\none &\none & \none &\none \cr 
        \none &\scriptstyle 0  &\scriptstyle 0  &\scriptstyle 0  &\scriptstyle 0  &\scriptstyle 0  &\scriptstyle 0  &\scriptstyle 0  &\scriptstyle 0  &\scriptstyle 0  &\scriptstyle 0  &\scriptstyle 0  &\scriptstyle 0 &\none &\none &\none &\none &\none &\none &\none &\none &\none &\none &\none & \none \cr 
        \none &\none & \scriptstyle 0  &\scriptstyle 0  &\scriptstyle 0  &\scriptstyle 0  &\scriptstyle 0  &\scriptstyle 0  &\scriptstyle 0  &\scriptstyle 0  &\scriptstyle 0  &\scriptstyle 0  &\scriptstyle 0  &\scriptstyle 0 &\none &\none &\none &\none &\none &\none &\none &\none &\none &\none &\none\cr 
        \none & \none &\none &\scriptstyle 0  &\scriptstyle 0  &\scriptstyle 0  &\scriptstyle 0  &\scriptstyle 0  &\scriptstyle 0  &\scriptstyle 0  &\scriptstyle 0  &\scriptstyle 0  &\scriptstyle 0  &\scriptstyle 0  &\scriptstyle 0 &\none &\none &\none &\none &\none &\none &\none &\none &\none &\none \cr 
        \none & \none & \none &\none &\scriptstyle 0  &\scriptstyle 0  &\scriptstyle 0  &\scriptstyle 0  &\scriptstyle 0  &\scriptstyle 0  &\scriptstyle 0  &\scriptstyle 0  &\scriptstyle 0  &\scriptstyle 0  &\scriptstyle 0  &\scriptstyle 0 &\none &\none &\none &\none &\none &\none &\none &\none &\none \cr
        \none &\none & \none & \none &\none &\scriptstyle 0  &\scriptstyle 0  &\scriptstyle 0  &\scriptstyle 0  &\scriptstyle 0  &\scriptstyle 0  &\scriptstyle 0  &\scriptstyle 0  &\scriptstyle 0  &\scriptstyle 0  &\scriptstyle 0  &\scriptstyle 0 &\none &\none &\none &\none &\none &\none &\none &\none \cr 
        \none &\none &\none& \none & \none &\none &\scriptstyle 0  &\scriptstyle 0  &\scriptstyle 0  &\scriptstyle 0  &\scriptstyle 0  &\scriptstyle 0  &\scriptstyle 0  &\scriptstyle 0  &\scriptstyle 0  &\scriptstyle 0  &\scriptstyle 0  &\scriptstyle 0 &\none &\none &\none &\none &\none &\none &\none \cr 
        \none &\none &\none &\none & \none & \none &\none &\scriptstyle 0  &\scriptstyle 0  &\scriptstyle 0  &\scriptstyle 0  &\scriptstyle 0  &\scriptstyle 0  &\scriptstyle 0  &\scriptstyle 0  &\scriptstyle 0  &\scriptstyle 0  &\scriptstyle 0  &\scriptstyle 0 &\none &\none &\none &\none &\none &\none \cr 
        \none &\none &\none &\none &\none & \none & \none &\none &\scriptstyle 0  &\scriptstyle 0  &\scriptstyle 0  &\scriptstyle 0  &\scriptstyle 0  &\scriptstyle 0  &\scriptstyle 0  &\scriptstyle 0  &\scriptstyle 0  &\scriptstyle 0  &\scriptstyle 0  &\scriptstyle 0 &\none &\none &\none &\none &\none \cr 
        \none &\none &\none &\none &\none &\none & \none & \none &\none  &\scriptstyle 0  &\scriptstyle 0  &\scriptstyle 0  &\scriptstyle 0  &\scriptstyle 0  &\scriptstyle 0  &\scriptstyle 0  &\scriptstyle 0  &\scriptstyle 0  &\scriptstyle 0  &\scriptstyle 0  &\scriptstyle 0 &\none &\none &\none &\none \cr 
        \end{ytableau}
    \end{adjustbox}

    Observe that each row represents a different twist of the pullback of $\dbcoh(\Gc r(1, \Ec))$: hence, we can use the Serre functor of such category to ``translate'' the row horizontally. We eventually get the following diagram, where some blocks are highlighted for further convenience (the height of the yellow area is $n-2$ and the length is $n-1$):

    \begin{adjustbox}{width=.5\textwidth,center}
        \ytableausetup{boxsize=2em}
        \begin{ytableau}
        \none & \none & \none & \none & \none & \none & *(lightgray) \cr 
        \scriptstyle  0 &\scriptstyle 0  &\scriptstyle 0  &\scriptstyle 0  &\scriptstyle 0    &\scriptstyle 0  &\scriptstyle 0  &\scriptstyle 0  &\scriptstyle 0  &\scriptstyle 0  &\scriptstyle 0  &\scriptstyle 0  \cr 
        \scriptstyle  0 &*(yellow)\scriptstyle 0  &*(yellow)\scriptstyle 0  &*(yellow)\scriptstyle 0  &*(yellow)\scriptstyle 0  &*(yellow)\scriptstyle 0  &\scriptstyle 0  &\scriptstyle 0 &\scriptstyle 0  &\scriptstyle 0  &\scriptstyle 0  &\scriptstyle 0    \cr 
        \scriptstyle  0 &*(yellow)\scriptstyle 0  &*(yellow)\scriptstyle 0  &*(yellow)\scriptstyle 0  &*(yellow)\scriptstyle 0  &\scriptstyle 0  &\scriptstyle 0  &\scriptstyle 0  &\scriptstyle 0  &\scriptstyle 0  &\scriptstyle 0    &\scriptstyle 0  \cr 
        \scriptstyle  0 &*(yellow)\scriptstyle 0  &*(yellow)\scriptstyle 0  &*(yellow)\scriptstyle 0  &\scriptstyle 0  &\scriptstyle 0  &\scriptstyle 0  &\scriptstyle 0  &\scriptstyle 0  &\scriptstyle 0    &\scriptstyle 0  &\scriptstyle 0  \cr 
        \scriptstyle  0 &*(yellow)\scriptstyle 0  &*(yellow)\scriptstyle 0    &\scriptstyle 0  &\scriptstyle 0  &\scriptstyle 0  &\scriptstyle 0    &\scriptstyle 0  &\scriptstyle 0  &\scriptstyle 0  &\scriptstyle 0  &\scriptstyle 0\cr 
        \scriptstyle  0 &\scriptstyle 0  &\scriptstyle 0  &\scriptstyle 0  &\scriptstyle 0  &\scriptstyle 0  &\scriptstyle 0  &\scriptstyle 0    &\scriptstyle 0  &\scriptstyle 0  &\scriptstyle 0  &\scriptstyle 0  \cr 
        \scriptstyle  0 &\scriptstyle 0  &\scriptstyle 0  &\scriptstyle 0  &\scriptstyle 0  &\scriptstyle 0  &\scriptstyle 0    &\scriptstyle 0  &\scriptstyle 0  &\scriptstyle 0  &\scriptstyle 0  &\scriptstyle 0  \cr 
        \scriptstyle  0 &\scriptstyle 0  &\scriptstyle 0  &\scriptstyle 0  &\scriptstyle 0    &\scriptstyle 0  &\scriptstyle 0  &\scriptstyle 0  &\scriptstyle 0  &\scriptstyle 0  &\scriptstyle 0  &\scriptstyle 0  \cr 
        \scriptstyle  0 &\scriptstyle 0  &\scriptstyle 0  &\scriptstyle 0  &\scriptstyle 0    &\scriptstyle 0  &\scriptstyle 0  &\scriptstyle 0  &\scriptstyle 0  &\scriptstyle 0  &\scriptstyle 0  &\scriptstyle 0  \cr 
        \scriptstyle  0 &\scriptstyle 0  &\scriptstyle 0  &\scriptstyle 0  &\scriptstyle 0    &\scriptstyle 0  &\scriptstyle 0  &\scriptstyle 0  &\scriptstyle 0  &\scriptstyle 0  &\scriptstyle 0  &\scriptstyle 0  \cr 
        \end{ytableau}
    \end{adjustbox}
    
    \subsubsection{First upward phase} Let us apply the rule described in Lemma \ref{lem:rule1} to the shortest yellow row, after moving it two steps to the right by mutating the block it passes through. Since we are not interested in the explicit description of such subcategory, we will denote it (and the similar pieces we will produce in the following) by $\times$. We find:

    \begin{adjustbox}{width=.5\textwidth,center}
        \ytableausetup{boxsize=2em}
        \begin{ytableau}
        \none & \none & \none & \none & \none & \none & *(lightgray) \cr 
        \scriptstyle  0 &\scriptstyle 0  &\scriptstyle 0  &\scriptstyle 0  &\scriptstyle 0    &\scriptstyle 0  &\scriptstyle 0  &\scriptstyle 0  &\scriptstyle 0  &\scriptstyle 0  &\scriptstyle 0  &\scriptstyle 0  \cr 
        \scriptstyle  0 &*(yellow)\scriptstyle 0  &*(yellow)\scriptstyle 0  &*(yellow)\scriptstyle 0  &*(yellow)\scriptstyle 0  &*(yellow)\scriptstyle 0  &\scriptstyle 0  &\scriptstyle 0 &\scriptstyle 0  &\scriptstyle 0  &\scriptstyle 0  &\scriptstyle 0    \cr 
        \scriptstyle  0 &*(yellow)\scriptstyle 0  &*(yellow)\scriptstyle 0  &*(yellow)\scriptstyle 0  &*(yellow)\scriptstyle 0  &\scriptstyle 0  &\scriptstyle 0  &\scriptstyle 0  &\scriptstyle 0  &\scriptstyle 0  &\scriptstyle 0    &\scriptstyle 0  \cr 
        \scriptstyle  0 &*(yellow)\scriptstyle 0  &*(yellow)\scriptstyle 1  &*(yellow)\scriptstyle 1  &\scriptstyle 0  &\scriptstyle 0  &\scriptstyle 0  &\scriptstyle 0  &\scriptstyle 0  &\scriptstyle 0    &\scriptstyle 0  &\scriptstyle 0  \cr 
        \scriptstyle  \times & \none  & \none  &\scriptstyle 0  &\scriptstyle 0  &\scriptstyle 0  &\scriptstyle 0    &\scriptstyle 0  &\scriptstyle 0  &\scriptstyle 0  &\scriptstyle 0  &\scriptstyle 0\cr 
        \scriptstyle  0 &\scriptstyle 0  &\scriptstyle 0  &\scriptstyle 0  &\scriptstyle 0  &\scriptstyle 0  &\scriptstyle 0  &\scriptstyle 0    &\scriptstyle 0  &\scriptstyle 0  &\scriptstyle 0  &\scriptstyle 0  \cr 
        \scriptstyle  0 &\scriptstyle 0  &\scriptstyle 0  &\scriptstyle 0  &\scriptstyle 0  &\scriptstyle 0  &\scriptstyle 0    &\scriptstyle 0  &\scriptstyle 0  &\scriptstyle 0  &\scriptstyle 0  &\scriptstyle 0  \cr 
        \scriptstyle  0 &\scriptstyle 0  &\scriptstyle 0  &\scriptstyle 0  &\scriptstyle 0    &\scriptstyle 0  &\scriptstyle 0  &\scriptstyle 0  &\scriptstyle 0  &\scriptstyle 0  &\scriptstyle 0  &\scriptstyle 0  \cr 
        \scriptstyle  0 &\scriptstyle 0  &\scriptstyle 0  &\scriptstyle 0  &\scriptstyle 0    &\scriptstyle 0  &\scriptstyle 0  &\scriptstyle 0  &\scriptstyle 0  &\scriptstyle 0  &\scriptstyle 0  &\scriptstyle 0  \cr 
        \scriptstyle  0 &\scriptstyle 0  &\scriptstyle 0  &\scriptstyle 0  &\scriptstyle 0    &\scriptstyle 0  &\scriptstyle 0  &\scriptstyle 0  &\scriptstyle 0  &\scriptstyle 0  &\scriptstyle 0  &\scriptstyle 0  \cr 
        \end{ytableau}
    \end{adjustbox}

    By applying the same step to each yellow row, progressivley from the shortest to the longest, we find the following ``chessboard'':

    \begin{adjustbox}{width=.5\textwidth,center}
        \ytableausetup{boxsize=2em}
        \begin{ytableau}
        \none & \none & \none & \none & \none & \none & *(lightgray) \cr 
        \scriptstyle  0 &\scriptstyle 0  &\scriptstyle 1  &\scriptstyle 2  &\scriptstyle 3    &\scriptstyle 4  &\scriptstyle 4  &\scriptstyle 0  &\scriptstyle 0  &\scriptstyle 0  &\scriptstyle 0  &\scriptstyle 0  \cr 
        \scriptstyle  \times & \none  & \none  & \none & \none & \none &\scriptstyle 0  &\scriptstyle 0 &\scriptstyle 0  &\scriptstyle 0  &\scriptstyle 0  &\scriptstyle 0    \cr 
        \scriptstyle  \times & \none & \none & \none & \none &\scriptstyle 0  &\scriptstyle 0  &\scriptstyle 0  &\scriptstyle 0  &\scriptstyle 0  &\scriptstyle 0    &\scriptstyle 0  \cr 
        \scriptstyle  \times & \none & \none & \none &\scriptstyle 0  &\scriptstyle 0  &\scriptstyle 0  &\scriptstyle 0  &\scriptstyle 0  &\scriptstyle 0    &\scriptstyle 0  &\scriptstyle 0  \cr 
        \scriptstyle  \times & \none & \none &\scriptstyle 0  &\scriptstyle 0  &\scriptstyle 0  &\scriptstyle 0    &\scriptstyle 0  &\scriptstyle 0  &\scriptstyle 0  &\scriptstyle 0  &\scriptstyle 0\cr 
        \scriptstyle  0 &\scriptstyle 0  &\scriptstyle 0  &\scriptstyle 0  &\scriptstyle 0  &\scriptstyle 0  &\scriptstyle 0  &\scriptstyle 0    &\scriptstyle 0  &\scriptstyle 0  &\scriptstyle 0  &\scriptstyle 0  \cr 
        \scriptstyle  0 &\scriptstyle 0  &\scriptstyle 0  &\scriptstyle 0  &\scriptstyle 0  &\scriptstyle 0  &\scriptstyle 0    &\scriptstyle 0  &\scriptstyle 0  &\scriptstyle 0  &\scriptstyle 0  &\scriptstyle 0  \cr 
        \scriptstyle  0 &\scriptstyle 0  &\scriptstyle 0  &\scriptstyle 0  &\scriptstyle 0    &\scriptstyle 0  &\scriptstyle 0  &\scriptstyle 0  &\scriptstyle 0  &\scriptstyle 0  &\scriptstyle 0  &\scriptstyle 0  \cr 
        \scriptstyle  0 &\scriptstyle 0  &\scriptstyle 0  &\scriptstyle 0  &\scriptstyle 0    &\scriptstyle 0  &\scriptstyle 0  &\scriptstyle 0  &\scriptstyle 0  &\scriptstyle 0  &\scriptstyle 0  &\scriptstyle 0  \cr 
        \scriptstyle  0 &\scriptstyle 0  &\scriptstyle 0  &\scriptstyle 0  &\scriptstyle 0    &\scriptstyle 0  &\scriptstyle 0  &\scriptstyle 0  &\scriptstyle 0  &\scriptstyle 0  &\scriptstyle 0  &\scriptstyle 0  \cr 
        \end{ytableau}
    \end{adjustbox}
    
    where the last nonzero number in the first row, in the general case, is $n-2$. Let us now apply the Serre functor to the segment of the first row terminating with the two ``$n-2$`` blocks. We will introduce new colored areas to simplify the exposition of the next phase.

     \begin{adjustbox}{width=.5\textwidth,center}
        \ytableausetup{boxsize=2em}
        \begin{ytableau}
        \none & \none & \none & \none & \none & \none & *(lightgray) \cr 
        \none & \none & \none & \none & \none & \none & \scriptstyle 4 &\scriptstyle 0  &\scriptstyle 0  &\scriptstyle 0  &\scriptstyle 0  &\scriptstyle 0  \cr 
        \scriptstyle  \times & \none  & \none  & \none & \none & \none &\scriptstyle 0  &\scriptstyle 0 &\scriptstyle 0  &\scriptstyle 0  &\scriptstyle 0  &\scriptstyle 0    \cr 
        \scriptstyle  \times & \none & \none & \none & \none &*(yellow)\scriptstyle 0  &\scriptstyle 0  &\scriptstyle 0  &\scriptstyle 0  &\scriptstyle 0  &\scriptstyle 0    &\scriptstyle 0  \cr 
        \scriptstyle  \times & \none & \none & \none &*(yellow)\scriptstyle 0  &*(yellow)\scriptstyle 0  &\scriptstyle 0  &\scriptstyle 0  &\scriptstyle 0  &\scriptstyle 0    &\scriptstyle 0  &\scriptstyle 0  \cr 
        \scriptstyle  \times & \none & \none &*(yellow)\scriptstyle 0  &*(yellow)\scriptstyle 0  &*(yellow)\scriptstyle 0  &\scriptstyle 0    &\scriptstyle 0  &\scriptstyle 0  &\scriptstyle 0  &\scriptstyle 0  &\scriptstyle 0\cr 
        \scriptstyle  0 &*(orange)\scriptstyle 0  &*(yellow)\scriptstyle 0  &*(yellow)\scriptstyle 0  &*(yellow)\scriptstyle 0  &*(yellow)\scriptstyle 0  &\scriptstyle 0  &\scriptstyle 0    &\scriptstyle 0  &\scriptstyle 0  &\scriptstyle 0  &\scriptstyle 0  \cr 
        \scriptstyle  0 &*(yellow)\scriptstyle 0  &*(yellow)\scriptstyle 0  &*(yellow)\scriptstyle 0  &*(yellow)\scriptstyle 0  &*(yellow)\scriptstyle 0  &\scriptstyle 0    &\scriptstyle 0  &\scriptstyle 0  &\scriptstyle 0  &\scriptstyle 0  &\scriptstyle 0  \cr 
        \scriptstyle  0 &*(yellow)\scriptstyle 0  &*(yellow)\scriptstyle 0  &*(yellow)\scriptstyle 0  &*(yellow)\scriptstyle 0    &*(yellow)\scriptstyle 0  &\scriptstyle 0  &\scriptstyle 0  &\scriptstyle 0  &\scriptstyle 0  &\scriptstyle 0  &\scriptstyle 0  \cr 
        \scriptstyle  0 &*(yellow)\scriptstyle 0  &*(yellow)\scriptstyle 0  &*(yellow)\scriptstyle 0  &*(yellow)\scriptstyle 0    &*(yellow)\scriptstyle 0  &\scriptstyle 0  &\scriptstyle 0  &\scriptstyle 0  &\scriptstyle 0  &\scriptstyle 0  &\scriptstyle 0  \cr 
        \scriptstyle  0 &*(yellow)\scriptstyle 0  &*(yellow)\scriptstyle 0  &*(yellow)\scriptstyle 0  &*(yellow)\scriptstyle 0    &*(yellow)\scriptstyle 0  &\scriptstyle 0  &\scriptstyle 0  &\scriptstyle 0  &\scriptstyle 0  &\scriptstyle 0  &\scriptstyle 0  \cr 
        \none & *(yellow)\scriptstyle  0 &*(yellow)\scriptstyle 0  &*(yellow)\scriptstyle 1  &*(yellow)\scriptstyle 2  &*(yellow)\scriptstyle 3    &\scriptstyle 4  
        \end{ytableau}
    \end{adjustbox}

    \subsubsection{Second upward phase} Let us mutate, as in the first upward phase, the first yellow row using the rule described in Lemma \ref{lem:rule1}.

    \begin{adjustbox}{width=.5\textwidth,center}
        \ytableausetup{boxsize=2em}
        \begin{ytableau}
        \none & \none & \none & \none & \none & \none & *(lightgray) \cr 
        \none & \none & \none & \none & \none & \none & \scriptstyle 4 &\scriptstyle 0  &\scriptstyle 0  &\scriptstyle 0  &\scriptstyle 0  &\scriptstyle 0  \cr 
        \scriptstyle  \times & \none  & \none  & \none & \none & \none &\scriptstyle 0  &\scriptstyle 0 &\scriptstyle 0  &\scriptstyle 0  &\scriptstyle 0  &\scriptstyle 0    \cr 
        \scriptstyle  \times & \none & \none & \none & \none &*(yellow)\scriptstyle 0  &\scriptstyle 0  &\scriptstyle 0  &\scriptstyle 0  &\scriptstyle 0  &\scriptstyle 0    &\scriptstyle 0  \cr 
        \scriptstyle  \times & \none & \none & \none &*(yellow)\scriptstyle 0  &*(yellow)\scriptstyle 0  &\scriptstyle 0  &\scriptstyle 0  &\scriptstyle 0  &\scriptstyle 0    &\scriptstyle 0  &\scriptstyle 0  \cr 
        \scriptstyle  \times & \none & \none &*(yellow)\scriptstyle 0  &*(yellow)\scriptstyle 0  &*(yellow)\scriptstyle 0  &\scriptstyle 0    &\scriptstyle 0  &\scriptstyle 0  &\scriptstyle 0  &\scriptstyle 0  &\scriptstyle 0\cr 
        \scriptstyle  0 &*(orange)\scriptstyle 0  &*(yellow)\scriptstyle 0  &*(yellow)\scriptstyle 0  &*(yellow)\scriptstyle 0  &*(yellow)\scriptstyle 0  &\scriptstyle 0  &\scriptstyle 0    &\scriptstyle 0  &\scriptstyle 0  &\scriptstyle 0  &\scriptstyle 0  \cr 
        \scriptstyle  0 &*(yellow)\scriptstyle 0  &*(yellow)\scriptstyle 0  &*(yellow)\scriptstyle 0  &*(yellow)\scriptstyle 0  &*(yellow)\scriptstyle 0  &\scriptstyle 0    &\scriptstyle 0  &\scriptstyle 0  &\scriptstyle 0  &\scriptstyle 0  &\scriptstyle 0  \cr 
        \scriptstyle  0 &*(yellow)\scriptstyle 0  &*(yellow)\scriptstyle 0  &*(yellow)\scriptstyle 0  &*(yellow)\scriptstyle 0    &*(yellow)\scriptstyle 0  &\scriptstyle 0  &\scriptstyle 0  &\scriptstyle 0  &\scriptstyle 0  &\scriptstyle 0  &\scriptstyle 0  \cr 
        \scriptstyle  0 &*(yellow)\scriptstyle 0  &*(yellow)\scriptstyle 0  &*(yellow)\scriptstyle 0  &*(yellow)\scriptstyle 0    &*(yellow)\scriptstyle 0  &\scriptstyle 0  &\scriptstyle 0  &\scriptstyle 0  &\scriptstyle 0  &\scriptstyle 0  &\scriptstyle 0  \cr 
        \scriptstyle  0 &*(yellow)\scriptstyle 0  &*(yellow)\scriptstyle 1  &*(yellow)\scriptstyle 1  &*(yellow)\scriptstyle 2    &*(yellow)\scriptstyle 3  &\scriptstyle 4  &\scriptstyle 0  &\scriptstyle 0  &\scriptstyle 0  &\scriptstyle 0  &\scriptstyle 0  \cr 
        \none & \none  0 &\none  &\none  &\none  &\none    &\scriptstyle 4  
        \end{ytableau}
    \end{adjustbox}

    After iterating this operation until we erase all the yellow blocks, we obtain the following final board:

    \begin{adjustbox}{width=.5\textwidth,center}
        \ytableausetup{boxsize=2em}
        \begin{ytableau}
        \none & \none & \none & \none & \none & \none & *(lightgray) \cr 
        \none & \none & \none & \none & \none & \none & \scriptstyle 5 &\scriptstyle 0  &\scriptstyle 0  &\scriptstyle 0  &\scriptstyle 0  &\scriptstyle 0  \cr 
        \scriptstyle  \times & \none  & \none  & \none & \none & \none &\scriptstyle 5  &\scriptstyle 0 &\scriptstyle 0  &\scriptstyle 0  &\scriptstyle 0  &\scriptstyle 0    \cr 
        \scriptstyle  \times & \none & \none & \none & \none & \none   &\scriptstyle 5  &\scriptstyle 0  &\scriptstyle 0  &\scriptstyle 0  &\scriptstyle 0    &\scriptstyle 0  \cr 
        \scriptstyle  \times & \none & \none & \none & \none   & \none   &\scriptstyle 5  &\scriptstyle 0  &\scriptstyle 0  &\scriptstyle 0    &\scriptstyle 0  &\scriptstyle 0  \cr 
        \scriptstyle  \times & \none   & \none   & \none   & \none   & \none   &\scriptstyle 5  &\scriptstyle 0    &\scriptstyle 0  &\scriptstyle 0  &\scriptstyle 0  &\scriptstyle 0  \cr 
        \scriptstyle  \times & \none   & \none   & \none   & \none   & \none   &\scriptstyle 5    &\scriptstyle 0  &\scriptstyle 0  &\scriptstyle 0  &\scriptstyle 0  &\scriptstyle 0  \cr 
        \scriptstyle  \times & \none   & \none   & \none   & \none     & \none   &\scriptstyle 4  &\scriptstyle 0  &\scriptstyle 0  &\scriptstyle 0  &\scriptstyle 0  &\scriptstyle 0  \cr 
        \scriptstyle  \times & \none   & \none   & \none   & \none     & \none   &\scriptstyle 4  &\scriptstyle 0  &\scriptstyle 0  &\scriptstyle 0  &\scriptstyle 0  &\scriptstyle 0  \cr 
        \scriptstyle  \times & \none   & \none   & \none   & \none     & \none   &\scriptstyle 4  &\scriptstyle 0  &\scriptstyle 0  &\scriptstyle 0  &\scriptstyle 0  &\scriptstyle 0  \cr 
        \scriptstyle  \times & \none   & \none  & \none  & \none    & \none  &\scriptstyle 4  &\scriptstyle 0  &\scriptstyle 0  &\scriptstyle 0  &\scriptstyle 0  &\scriptstyle 0  \cr 
        \none & \none  0 &\none  &\none  &\none  &\none    &\scriptstyle 4  
        \end{ytableau}
    \end{adjustbox}

    Note that the content of the upmost box is $n-1$, and not $n-2$. This is because the subcategory denoted by the orange box, after all this process, gets mutated to $\mathbf S^{n-1}_{0, 1}$. Now let us apply Corollary \ref{cor:rule2} to the last ``$n-2$'' block and the half-row of zeros immediately above. Finally, we apply the Serre funcotr to the resulting new ``$n-2$'' block, obtaining the final chessboard:
    
    \begin{adjustbox}{width=.5\textwidth,center}
        \ytableausetup{boxsize=2em}
        \begin{ytableau}
        \none & \none & \none & \none & \none & \none & *(lightgray) \scriptstyle 4 \cr 
        \none & \none & \none & \none & \none & \none & \scriptstyle 5 &\scriptstyle 0  &\scriptstyle 0  &\scriptstyle 0  &\scriptstyle 0  &\scriptstyle 0  \cr 
        \scriptstyle  \times & \none  & \none  & \none & \none & \none &\scriptstyle 5  &\scriptstyle 0 &\scriptstyle 0  &\scriptstyle 0  &\scriptstyle 0  &\scriptstyle 0    \cr 
        \scriptstyle  \times & \none & \none & \none & \none & \none   &\scriptstyle 5  &\scriptstyle 0  &\scriptstyle 0  &\scriptstyle 0  &\scriptstyle 0    &\scriptstyle 0  \cr 
        \scriptstyle  \times & \none & \none & \none & \none   & \none   &\scriptstyle 5  &\scriptstyle 0  &\scriptstyle 0  &\scriptstyle 0    &\scriptstyle 0  &\scriptstyle 0  \cr 
        \scriptstyle  \times & \none   & \none   & \none   & \none   & \none   &\scriptstyle 5  &\scriptstyle 0    &\scriptstyle 0  &\scriptstyle 0  &\scriptstyle 0  &\scriptstyle 0  \cr 
        \scriptstyle  \times & \none   & \none   & \none   & \none   & \none   &\scriptstyle 5    &\scriptstyle 0  &\scriptstyle 0  &\scriptstyle 0  &\scriptstyle 0  &\scriptstyle 0  \cr 
        \scriptstyle  \times & \none   & \none   & \none   & \none     & \none   &\scriptstyle 4  &\scriptstyle 0  &\scriptstyle 0  &\scriptstyle 0  &\scriptstyle 0  &\scriptstyle 0  \cr 
        \scriptstyle  \times & \none   & \none   & \none   & \none     & \none   &\scriptstyle 4  &\scriptstyle 0  &\scriptstyle 0  &\scriptstyle 0  &\scriptstyle 0  &\scriptstyle 0  \cr 
        \scriptstyle  \times & \none   & \none   & \none   & \none     & \none   &\scriptstyle 4  &\scriptstyle 0  &\scriptstyle 0  &\scriptstyle 0  &\scriptstyle 0  &\scriptstyle 0  \cr 
        \scriptstyle  \times & \none   & \none  & \none  & \none    & \none  &\scriptstyle 4  &\none 
        &\none   &\none   &\none   &\none   \cr 
        \none & \none  0 &\none  &\none  &\none  &\none    &\scriptstyle 4  
        \end{ytableau}
    \end{adjustbox}
    
    The direct consequence of this sequence of mutations is the following, which settles the construction of the derived embedding for the even case:
    \begin{proposition}\label{prop:main_theorem_fibrations_even}
        Let $X_1$ and $X_2$ be as described in Section \ref{sec:cover_and_fano_fibration}, for $\epsilon = 0$. Then there is a fully faithful functor $\Psi:\dbcoh(X_1)\xhookrightarrow{\,\,\,\,\,}\dbcoh(X_2)$. Moreover, $\dbcoh(X_2)$ admits a semiorthogonal decomposition as follows:
        \begin{equation*}
            \dbcoh(X_2) = \langle \Psi(\dbcoh(X_1)), F_1, \dots, F_{2n^2-n}\rangle
        \end{equation*}
        where, for all $i$, $F_i$ is an admissible subcategory given by the image of $\dbcoh(B)$ through a fully faithful functor.
    \end{proposition}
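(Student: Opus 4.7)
The plan is to take the semiorthogonal decomposition \ref{eq:sod_for_M_with_line_bundles} of $\dbcoh(\Mc)$, whose initial component is $i_{1*}\bar q_1^*\dbcoh(X_1)$, and mutate its complement until the result is recognisable as the complement of $i_{2*}\bar q_2^*\dbcoh(X_2)$ coming from \ref{eq:decomposition_Mc_from_X2} expanded by the Kuznetsov--Samokhin decomposition \ref{eq:sod_Gr2E}, with an intermediate stack of admissible subcategories $F_1, \dots, F_{2n^2-n}$ sitting in between. Once this two-step decomposition is in place, factoring it through the embedding $i_{2*}\bar q_2^*$ yields both the functor $\Psi$ and the claimed decomposition of $\dbcoh(X_2)$.

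First, I would rewrite the right-hand side of \ref{eq:sod_for_M_with_line_bundles} in the block language $\mathbf A^m_{x,y}$: this produces the rectangular chessboard displayed at the start of Section \ref{sec:chess_game_fibrations_even}. Each row of that board is $q_1^*r_1^*\dbcoh(B)$ twisted by a fixed power of $\Lc$, so the Serre functor of the projective bundle $\Gc r(1, \Ec)\to B$ acts as horizontal translation within a row; applying it row by row reshapes the board into the triangular yellow configuration.

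Next, I would run the two upward phases as pictured. In the first phase, the yellow rows are processed from shortest to longest: each application of Lemma \ref{lem:rule1} lifts a block of shape $\mathbf A^{r-1}_{-1,1}$ into a stack of symmetric powers in the row immediately above, leaving behind an admissible residue $\times$ in the leftmost column. After using the Serre functor to translate the new topmost row, the second upward phase iterates Lemma \ref{lem:rule1} to push the symmetric-power blocks upward row by row, with Lemma \ref{lem:chess_game_exts_II} invoked to reorder blocks within a row where needed. A subtlety to monitor is the evolution of the block marked in orange: after all the mutations it ends up as $\mathbf S^{n-1}_{0,1}$, which produces the value $n-1$ at the very top of the final board rather than $n-2$. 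A final application of Corollary \ref{cor:rule2} to the remaining $(n-2)$-labelled block, followed by one more Serre translation, produces the terminal chessboard.

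It remains to identify this terminal chessboard with the decomposition of $\dbcoh(\Mc)$ given by \ref{eq:decomposition_Mc_from_X2} and \ref{eq:sod_Gr2E}. This uses the relativisation functor $\Fm$ of \ref{eq_relativizationfunctor}, which guarantees that each symmetric power of $\Uc_2^\vee$ on $\Mc$ restricts fibrewise to the analogous bundle on $G(2,2n)$, so that the rightmost column of the terminal board reproduces the $\Ac$'s and $\Bc$'s of \ref{eq:sod_Gr2E} twisted by $\Lc$. The leftover $\times$ blocks then play the role of the $F_i$, and their count across the two upward phases and the final Corollary \ref{cor:rule2} step adds up to $2n^2-n$. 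The principal obstacle is precisely this bookkeeping: confirming that the shapes, twists, and symmetric-power ranks in the terminal chessboard match the target decomposition on the nose, and that the residual count is $2n^2-n$ rather than off by boundary corrections coming from the orange block or the terminal Serre translation.
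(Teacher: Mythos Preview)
Your approach is essentially identical to the paper's: run the chess game of Section~\ref{sec:chess_game_fibrations_even}, arrive at the terminal board, and recognise inside it the blocks forming $q_2^*\dbcoh(\Gc r(2,\Ec))\otimes\Lc$ so that the comparison with \ref{eq:decomposition_Mc_from_X2} yields the embedding and the $F_i$'s.

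There is, however, a concrete misreading of the terminal chessboard. The column carrying the $\Ac$'s and $\Bc$'s is \emph{not} the rightmost column of the board: it is the column at horizontal position~$0$ (aligned with the gray box), namely the stack $\mathbf A^{n-2}_{0,0},\ \mathbf A^{n-1}_{0,1},\dots,\mathbf A^{n-1}_{0,n},\ \mathbf A^{n-2}_{0,n+1},\dots,\mathbf A^{n-2}_{0,2n-1}$. To the right of this column there remain, in rows $1$ through $2n-3$, a total of $(2n-3)(n-1)$ blocks of the form $\mathbf A^0_{x,y}$. Consequently the $F_i$'s are not the $\times$ blocks alone (there are only $2n-3$ of those, far fewer than the claimed count): they comprise the $\times$ blocks together with all these surviving zero blocks. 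The paper's proof includes precisely the step you omit --- after displaying the collection \ref{eq:collection_final_chesssboard_even}, it observes that the $\Gc r(2,\Ec)$ blocks are scattered among the remaining zeros and explicitly mutates them past those zeros to the end of the decomposition, defining the $F_i$ as whatever is left over (see the footnote there). The functor $\Fm$ plays no role at this stage; the matching with \ref{eq:sod_Gr2E} is read off directly from the twists and symmetric-power labels.
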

    \begin{proof}
        In the notation of Section \ref{sec:arrangement_of_boxes}, the last chessboard  essentially says that:
        \begin{equation}\label{eq:collection_final_chesssboard_even}
            \begin{split}
                \Phi i_{1*}\bar q_1\dbcoh(X_1)^\perp = \langle  
                                        & \hspace{12pt} \mathbf A^{n-2}_{0,0}, \\
                                        & \hspace{12pt}\mathbf A^{n-1}_{0,1}, \mathbf A^0_{1,1}, \dots\dots\dots, \mathbf A^0_{n-1,1} \\
                                        & \times, \mathbf A^{n-1}_{0,2}, \mathbf A^0_{1,2}, \dots\dots\dots, \mathbf A^0_{n-1,2} \\
                                        & \hspace{20pt} \vdots \hspace{100pt} \vdots \\
                                        & \times, \mathbf A^{n-1}_{0,n}, \mathbf A^0_{1,n+1}, \dots\dots, \mathbf A^0_{n-1,n+1} \\
                                        & \times, \mathbf A^{n-2}_{0,n+1}, \mathbf A^0_{1,n+2}, \dots\dots, \mathbf A^0_{n-1,n+2} \\
                                        & \hspace{20pt} \vdots \hspace{100pt} \vdots \\
                                        & \times, \mathbf A^{n-2}_{0,2n-2}, \mathbf A^0_{1,2n-2}, \dots\dots, \mathbf A^0_{n-1,2n-2} \\
                                        & \hspace{12pt} \mathbf A^{n-2}_{0,2n-1}\rangle\\
            \end{split}
        \end{equation}
        where $\Phi$ is the functor induced by mutations. On the other hand, by Serre functor, one has:
        \begin{equation*}
            \begin{split}
                q_2^* \dbcoh(\Gc r(2, \Ec))\otimes\Oc(0, 1)   & = \langle  \mathbf A^{n-1}_{0,1}, \dots, \mathbf A^{n-1}_{0,n}, \mathbf A^{n-2}_{0,n+1}, \dots, \mathbf A^{n-2}_{0,2n-2}, \mathbf A^{n-2}_{0,2n-1},\mathbf A^{n-2}_{0,2n}, \rangle \\
                                        & = \langle \mathbf A^{n-2}_{0,0}, \mathbf A^{n-1}_{0,1}, \dots, \mathbf A^{n-1}_{0,n}, \mathbf A^{n-2}_{0,n+1}, \dots, \mathbf A^{n-2}_{0,2n-2}, \mathbf A^{n-2}_{0,2n-1} \rangle,
            \end{split}
        \end{equation*}
        and the blocks in the second line all appear in the collection \ref{eq:collection_final_chesssboard_even}. Hence we can move all of them to the end of the semiorthogonal decomposition (mutating the blocks in between accordingly), and we finally find:
        \begin{equation}\label{eq:collection_final_sod_even}
            \dbcoh(\Mc) = \langle \Phi i_{1*}\bar q_1\dbcoh(X_1), F_1, \dots, F_{2n^2-n}, q_2^*\dbcoh(\Gc r(2, \Ec))\rangle
        \end{equation}
        where $F_i:= p_1^*r_1^*\dbcoh(B)\otimes E_i$, for $E_i$ an exceptional object.\footnote{A more explicit description of the $E_i$'s can be obtained by computing the mutations which lead from the collection \ref{eq:collection_final_chesssboard_even} to \ref{eq:collection_final_sod_even}. However, these mutations would be exceptionally cumbersome to write, burdening an already heavy notation.} The proof is complete once we set $\Psi:= \Phi i_{1*}\bar q_1$.
    \end{proof}
    \subsection{The chess game - mutations for the odd case}\label{sec:chess_game_fibrations_odd}
    We now address the odd case, i.e. $\epsilon = 1$. As above, we choose to write the chessboards for $n=6$ while we describe the argument in general. We start with:
    
    \begin{adjustbox}{width=.8\textwidth,center}
        \ytableausetup{boxsize=1.5em}
        \begin{ytableau} 
        \none & \none & \none & \none & \none & \none &  *(lightgray) \cr 
        \scriptstyle  0 &\scriptstyle 0  &\scriptstyle 0  &\scriptstyle 0  &\scriptstyle 0  &\scriptstyle 0  &\scriptstyle 0  &\scriptstyle 0  &\scriptstyle 0  &\scriptstyle 0  &\scriptstyle 0  &\scriptstyle 0  &\scriptstyle 0 &\none &\none &\none &\none &\none &\none &\none &\none &\none &\none &\none & \none &\none \cr 
        \none &\scriptstyle 0  &\scriptstyle 0  &\scriptstyle 0  &\scriptstyle 0  &\scriptstyle 0  &\scriptstyle 0  &\scriptstyle 0  &\scriptstyle 0  &\scriptstyle 0  &\scriptstyle 0  &\scriptstyle 0  &\scriptstyle 0 &\scriptstyle 0 &\none &\none &\none &\none &\none &\none &\none &\none &\none &\none &\none & \none \cr 
        \none &\none & \scriptstyle 0  &\scriptstyle 0  &\scriptstyle 0  &\scriptstyle 0  &\scriptstyle 0  &\scriptstyle 0  &\scriptstyle 0  &\scriptstyle 0  &\scriptstyle 0  &\scriptstyle 0  &\scriptstyle 0  &\scriptstyle 0 &\scriptstyle 0 &\none &\none &\none &\none &\none &\none &\none &\none &\none &\none &\none\cr 
        \none & \none &\none &\scriptstyle 0  &\scriptstyle 0  &\scriptstyle 0  &\scriptstyle 0  &\scriptstyle 0  &\scriptstyle 0  &\scriptstyle 0  &\scriptstyle 0  &\scriptstyle 0  &\scriptstyle 0  &\scriptstyle 0  &\scriptstyle 0 &\scriptstyle 0 &\none &\none &\none &\none &\none &\none &\none &\none &\none &\none \cr 
        \none & \none & \none &\none &\scriptstyle 0  &\scriptstyle 0  &\scriptstyle 0  &\scriptstyle 0  &\scriptstyle 0  &\scriptstyle 0  &\scriptstyle 0  &\scriptstyle 0  &\scriptstyle 0  &\scriptstyle 0  &\scriptstyle 0  &\scriptstyle 0 &\scriptstyle 0 &\none &\none &\none &\none &\none &\none &\none &\none &\none \cr
        \none &\none & \none & \none &\none &\scriptstyle 0  &\scriptstyle 0  &\scriptstyle 0  &\scriptstyle 0  &\scriptstyle 0  &\scriptstyle 0  &\scriptstyle 0  &\scriptstyle 0  &\scriptstyle 0  &\scriptstyle 0  &\scriptstyle 0  &\scriptstyle 0 &\scriptstyle 0 &\none &\none &\none &\none &\none &\none &\none &\none \cr 
        \none &\none &\none& \none & \none &\none &\scriptstyle 0  &\scriptstyle 0  &\scriptstyle 0  &\scriptstyle 0  &\scriptstyle 0  &\scriptstyle 0  &\scriptstyle 0  &\scriptstyle 0  &\scriptstyle 0  &\scriptstyle 0  &\scriptstyle 0  &\scriptstyle 0 &\scriptstyle 0 &\none &\none &\none &\none &\none &\none &\none \cr 
        \none &\none &\none &\none & \none & \none &\none &\scriptstyle 0  &\scriptstyle 0  &\scriptstyle 0  &\scriptstyle 0  &\scriptstyle 0  &\scriptstyle 0  &\scriptstyle 0  &\scriptstyle 0  &\scriptstyle 0  &\scriptstyle 0  &\scriptstyle 0  &\scriptstyle 0 &\scriptstyle 0 &\none &\none &\none &\none &\none &\none \cr 
        \none &\none &\none &\none &\none & \none & \none &\none &\scriptstyle 0  &\scriptstyle 0  &\scriptstyle 0  &\scriptstyle 0  &\scriptstyle 0  &\scriptstyle 0  &\scriptstyle 0  &\scriptstyle 0  &\scriptstyle 0  &\scriptstyle 0  &\scriptstyle 0  &\scriptstyle 0 &\scriptstyle 0 &\none &\none &\none &\none &\none \cr 
        \none &\none &\none &\none &\none &\none & \none & \none &\none  &\scriptstyle 0  &\scriptstyle 0  &\scriptstyle 0  &\scriptstyle 0  &\scriptstyle 0  &\scriptstyle 0  &\scriptstyle 0  &\scriptstyle 0  &\scriptstyle 0  &\scriptstyle 0  &\scriptstyle 0  &\scriptstyle 0 &\scriptstyle 0 &\none &\none &\none &\none \cr 
        \none &\none &\none &\none &\none &\none &\none & \none & \none &\none &\scriptstyle 0  &\scriptstyle 0  &\scriptstyle 0  &\scriptstyle 0  &\scriptstyle 0  &\scriptstyle 0  &\scriptstyle 0  &\scriptstyle 0  &\scriptstyle 0  &\scriptstyle 0  &\scriptstyle 0  &\scriptstyle 0 &\scriptstyle 0 &\none &\none &\none \cr 
        \end{ytableau}
    \end{adjustbox}

    As above, we use the Serre functor of $\Gc r(1, \Ec)$ to translate the rows horizontally. We find the following arrangement, where the yellow area, as usual, is for ease of notation in the next steps:

    \begin{adjustbox}{width=.5\textwidth,center}
        \ytableausetup{boxsize=2em}
        \begin{ytableau}
        \none & \none & \none & \none & \none & \none & *(lightgray) \cr 
        \scriptstyle  0 &\scriptstyle 0  &\scriptstyle 0  &\scriptstyle 0  &\scriptstyle 0    &\scriptstyle 0  &\scriptstyle 0  &\scriptstyle 0  &\scriptstyle 0  &\scriptstyle 0  &\scriptstyle 0  &\scriptstyle 0  &\scriptstyle 0\cr 
        \scriptstyle  0 &*(yellow)\scriptstyle 0  &*(yellow)\scriptstyle 0  &*(yellow)\scriptstyle 0  &*(yellow)\scriptstyle 0  &*(yellow)\scriptstyle 0  &\scriptstyle 0  &\scriptstyle 0 &\scriptstyle 0  &\scriptstyle 0  &\scriptstyle 0  &\scriptstyle 0   &\scriptstyle 0 \cr 
        \scriptstyle  0 &*(yellow)\scriptstyle 0  &*(yellow)\scriptstyle 0  &*(yellow)\scriptstyle 0  &*(yellow)\scriptstyle 0  &*(yellow)\scriptstyle 0  &\scriptstyle 0  &\scriptstyle 0  &\scriptstyle 0  &\scriptstyle 0  &\scriptstyle 0    &\scriptstyle 0  &\scriptstyle 0\cr 
        \scriptstyle  0 &*(yellow)\scriptstyle 0  &*(yellow)\scriptstyle 0  &*(yellow)\scriptstyle 0  &*(yellow)\scriptstyle 0  &*(yellow)\scriptstyle 0  &\scriptstyle 0  &\scriptstyle 0  &\scriptstyle 0  &\scriptstyle 0    &\scriptstyle 0  &\scriptstyle 0  &\scriptstyle 0\cr 
        \scriptstyle  0 &*(yellow)\scriptstyle 0  &*(yellow)\scriptstyle 0    &*(yellow)\scriptstyle 0  &*(yellow)\scriptstyle 0  &*(yellow)\scriptstyle 0  &\scriptstyle 0    &\scriptstyle 0  &\scriptstyle 0  &\scriptstyle 0  &\scriptstyle 0  &\scriptstyle 0&\scriptstyle 0\cr 
        \scriptstyle  0 &*(yellow)\scriptstyle 0  &*(yellow)\scriptstyle 0  &*(yellow)\scriptstyle 0  &*(yellow)\scriptstyle 0  &*(yellow)\scriptstyle 0  &\scriptstyle 0  &\scriptstyle 0    &\scriptstyle 0  &\scriptstyle 0  &\scriptstyle 0  &\scriptstyle 0  &\scriptstyle 0\cr 
        \scriptstyle  0 &*(yellow)\scriptstyle 0  &*(yellow)\scriptstyle 0  &*(yellow)\scriptstyle 0  &*(yellow)\scriptstyle 0  &*(yellow)\scriptstyle 0  &\scriptstyle 0    &\scriptstyle 0  &\scriptstyle 0  &\scriptstyle 0  &\scriptstyle 0  &\scriptstyle 0  &\scriptstyle 0\cr 
        \scriptstyle  0 &*(yellow)\scriptstyle 0  &*(yellow)\scriptstyle 0  &*(yellow)\scriptstyle 0  &*(yellow)\scriptstyle 0    &\scriptstyle 0  &\scriptstyle 0  &\scriptstyle 0  &\scriptstyle 0  &\scriptstyle 0  &\scriptstyle 0  &\scriptstyle 0  &\scriptstyle 0\cr 
        \scriptstyle  0 &*(yellow)\scriptstyle 0  &*(yellow)\scriptstyle 0  &*(yellow)\scriptstyle 0  &\scriptstyle 0    &\scriptstyle 0  &\scriptstyle 0  &\scriptstyle 0  &\scriptstyle 0  &\scriptstyle 0  &\scriptstyle 0  &\scriptstyle 0  &\scriptstyle 0\cr 
        \scriptstyle  0 &*(yellow)\scriptstyle 0  &*(yellow)\scriptstyle 0  &\scriptstyle 0  &\scriptstyle 0    &\scriptstyle 0  &\scriptstyle 0  &\scriptstyle 0  &\scriptstyle 0  &\scriptstyle 0  &\scriptstyle 0  &\scriptstyle 0  &\scriptstyle 0\cr 
        \scriptstyle  0 &*(yellow)\scriptstyle 0  &\scriptstyle 0  &\scriptstyle 0  &\scriptstyle 0    &\scriptstyle 0  &\scriptstyle 0  &\scriptstyle 0  &\scriptstyle 0  &\scriptstyle 0  &\scriptstyle 0  &\scriptstyle 0  &\scriptstyle 0\cr 
        \end{ytableau}
    \end{adjustbox}

    \subsubsection{First upward phase}
    We essentially proceed as in the even case, but with a different yellow area. We mutate away all the object in such area by mutating the first line bundle (in the leftmost white column) through them, and then by moving them through the row immediately above until they get canceled by the rule of Lemma \ref{lem:rule1}. The outcome is the following:
    
    \begin{adjustbox}{width=.5\textwidth,center}
        \ytableausetup{boxsize=2em}
        \begin{ytableau}
        \none & \none & \none & \none & \none & \none & *(lightgray) \cr 
        \scriptstyle  0 &\scriptstyle 0  &\scriptstyle 1  &\scriptstyle 2  &\scriptstyle 3    &\scriptstyle 4  &\scriptstyle 5  &\scriptstyle 0  &\scriptstyle 0  &\scriptstyle 0  &\scriptstyle 0  &\scriptstyle 0  &\scriptstyle 0\cr 
        \scriptstyle  \times & \none   & \none   & \none   & \none   & \none   &\scriptstyle 5  &\scriptstyle 0 &\scriptstyle 0  &\scriptstyle 0  &\scriptstyle 0  &\scriptstyle 0   &\scriptstyle 0 \cr 
        \scriptstyle  \times & \none   & \none   & \none   & \none   & \none   &\scriptstyle 5  &\scriptstyle 0  &\scriptstyle 0  &\scriptstyle 0  &\scriptstyle 0    &\scriptstyle 0  &\scriptstyle 0\cr 
        \scriptstyle  \times & \none   & \none   & \none   & \none   & \none   &\scriptstyle 5  &\scriptstyle 0  &\scriptstyle 0  &\scriptstyle 0    &\scriptstyle 0  &\scriptstyle 0  &\scriptstyle 0\cr 
        \scriptstyle  \times & \none   & \none     & \none   & \none   & \none   &\scriptstyle 5    &\scriptstyle 0  &\scriptstyle 0  &\scriptstyle 0  &\scriptstyle 0  &\scriptstyle 0&\scriptstyle 0\cr 
        \scriptstyle  \times & \none   & \none   & \none   & \none   & \none   &\scriptstyle 5  &\scriptstyle 0    &\scriptstyle 0  &\scriptstyle 0  &\scriptstyle 0  &\scriptstyle 0  &\scriptstyle 0\cr 
        \scriptstyle  \times & \none   & \none   & \none   & \none     & \none   &\scriptstyle 0  &\scriptstyle 0  &\scriptstyle 0  &\scriptstyle 0  &\scriptstyle 0  &\scriptstyle 0  &\scriptstyle 0\cr 
        \scriptstyle  \times & \none   & \none   & \none   & \none     & \scriptstyle 0  & \scriptstyle 0  &\scriptstyle 0  &\scriptstyle 0  &\scriptstyle 0  &\scriptstyle 0  &\scriptstyle 0  &\scriptstyle 0\cr 
        \scriptstyle  \times & \none   & \none   & \none   & \scriptstyle 0    & \scriptstyle 0  & \scriptstyle 0  &\scriptstyle 0  &\scriptstyle 0  &\scriptstyle 0  &\scriptstyle 0  &\scriptstyle 0  &\scriptstyle 0\cr 
        \scriptstyle  \times & \none   & \none   & \scriptstyle 0  & \scriptstyle 0    & \scriptstyle 0  & \scriptstyle 0  &\scriptstyle 0  &\scriptstyle 0  &\scriptstyle 0  &\scriptstyle 0  &\scriptstyle 0  &\scriptstyle 0\cr 
        \scriptstyle  \times & \none   & \scriptstyle 0  & \scriptstyle 0  & \scriptstyle 0    & \scriptstyle 0  & \scriptstyle 0  &\scriptstyle 0  & \scriptstyle 0  &\scriptstyle 0  &\scriptstyle 0  &\scriptstyle 0  &\scriptstyle 0\cr 
        \end{ytableau}
    \end{adjustbox}

    Let us now apply the Serre functor to the first $n$ blocks of the first row, and let us identify a new yellow area for the next upward phase:
    
    \begin{adjustbox}{width=.5\textwidth,center}
        \ytableausetup{boxsize=2em}
        \begin{ytableau}
        \none & \none & \none & \none & \none & \none & *(lightgray) \cr 
        \none & \none & \none & \none & \none & \none &\scriptstyle 5  &\scriptstyle 0  &\scriptstyle 0  &\scriptstyle 0  &\scriptstyle 0  &\scriptstyle 0  &\scriptstyle 0\cr 
        \scriptstyle  \times & \none   & \none   & \none   & \none   & \none   &\scriptstyle 5  &\scriptstyle 0 &\scriptstyle 0  &\scriptstyle 0  &\scriptstyle 0  &\scriptstyle 0   &\scriptstyle 0 \cr 
        \scriptstyle  \times & \none   & \none   & \none   & \none   & \none   &\scriptstyle 5  &\scriptstyle 0  &\scriptstyle 0  &\scriptstyle 0  &\scriptstyle 0    &\scriptstyle 0  &\scriptstyle 0\cr 
        \scriptstyle  \times & \none   & \none   & \none   & \none   & \none   &\scriptstyle 5  &\scriptstyle 0  &\scriptstyle 0  &\scriptstyle 0    &\scriptstyle 0  &\scriptstyle 0  &\scriptstyle 0\cr 
        \scriptstyle  \times & \none   & \none     & \none   & \none   & \none   &\scriptstyle 5    &\scriptstyle 0  &\scriptstyle 0  &\scriptstyle 0  &\scriptstyle 0  &\scriptstyle 0&\scriptstyle 0\cr 
        \scriptstyle  \times & \none   & \none   & \none   & \none   & \none   &\scriptstyle 5  &\scriptstyle 0    &\scriptstyle 0  &\scriptstyle 0  &\scriptstyle 0  &\scriptstyle 0  &\scriptstyle 0\cr 
        \scriptstyle  \times & \none   & \none   & \none   & \none     & \none   &\scriptstyle 0  &\scriptstyle 0  &\scriptstyle 0  &\scriptstyle 0  &\scriptstyle 0  &\scriptstyle 0  &\scriptstyle 0\cr 
        \scriptstyle  \times & \none   & \none   & \none   & \none     &*(yellow)\scriptstyle 0  &*(yellow)\scriptstyle 0  &\scriptstyle 0  &\scriptstyle 0  &\scriptstyle 0  &\scriptstyle 0  &\scriptstyle 0  &\scriptstyle 0\cr 
        \scriptstyle  \times & \none   & \none   & \none   &*(yellow)\scriptstyle 0    &*(yellow)\scriptstyle 0  &*(yellow)\scriptstyle 0  &\scriptstyle 0  &\scriptstyle 0  &\scriptstyle 0  &\scriptstyle 0  &\scriptstyle 0  &\scriptstyle 0\cr 
        \scriptstyle  \times & \none   & \none   &*(yellow)\scriptstyle 0  &*(yellow)\scriptstyle 0    &*(yellow)\scriptstyle 0  &*(yellow)\scriptstyle 0  &\scriptstyle 0  &\scriptstyle 0  &\scriptstyle 0  &\scriptstyle 0  &\scriptstyle 0  &\scriptstyle 0\cr 
        \scriptstyle  \times & \none   &*(yellow)\scriptstyle 0  &*(yellow)\scriptstyle 0  &*(yellow)\scriptstyle 0    &*(yellow)\scriptstyle 0  &*(yellow)\scriptstyle 0  &\scriptstyle 0  & \scriptstyle 0  &\scriptstyle 0  &\scriptstyle 0  &\scriptstyle 0  &\scriptstyle 0\cr 
        \none & *(yellow)\scriptstyle  0 &*(yellow)\scriptstyle 0  &*(yellow)\scriptstyle 1  &*(yellow)\scriptstyle 2  &*(yellow)\scriptstyle 3    &*(yellow)\scriptstyle 4 
        \end{ytableau}
    \end{adjustbox}

    \subsubsection{Second upward phase}
    We apply for the last time the rule of Lemma \ref{lem:rule1}, getting rid of the yellow boxes once again:

    \begin{adjustbox}{width=.5\textwidth,center}
        \ytableausetup{boxsize=2em}
        \begin{ytableau}
        \none & \none & \none & \none & \none & \none & *(lightgray) \cr 
        \none & \none & \none & \none & \none & \none &\scriptstyle 5  &\scriptstyle 0  &\scriptstyle 0  &\scriptstyle 0  &\scriptstyle 0  &\scriptstyle 0  &\scriptstyle 0\cr 
        \scriptstyle  \times & \none   & \none   & \none   & \none   & \none   &\scriptstyle 5  &\scriptstyle 0 &\scriptstyle 0  &\scriptstyle 0  &\scriptstyle 0  &\scriptstyle 0   &\scriptstyle 0 \cr 
        \scriptstyle  \times & \none   & \none   & \none   & \none   & \none   &\scriptstyle 5  &\scriptstyle 0  &\scriptstyle 0  &\scriptstyle 0  &\scriptstyle 0    &\scriptstyle 0  &\scriptstyle 0\cr 
        \scriptstyle  \times & \none   & \none   & \none   & \none   & \none   &\scriptstyle 5  &\scriptstyle 0  &\scriptstyle 0  &\scriptstyle 0    &\scriptstyle 0  &\scriptstyle 0  &\scriptstyle 0\cr 
        \scriptstyle  \times & \none   & \none     & \none   & \none   & \none   &\scriptstyle 5    &\scriptstyle 0  &\scriptstyle 0  &\scriptstyle 0  &\scriptstyle 0  &\scriptstyle 0&\scriptstyle 0\cr 
        \scriptstyle  \times & \none   & \none   & \none   & \none   & \none   &\scriptstyle 5  &\scriptstyle 0    &\scriptstyle 0  &\scriptstyle 0  &\scriptstyle 0  &\scriptstyle 0  &\scriptstyle 0\cr 
        \scriptstyle  \times & \none   & \none   & \none   & \none     & \none   &\scriptstyle 5  &\scriptstyle 5  &\scriptstyle 0  &\scriptstyle 0  &\scriptstyle 0  &\scriptstyle 0  &\scriptstyle 0\cr 
        \scriptstyle  \times & \none   & \none   & \none   & \none     &\none    &\none    &\scriptstyle 5  &\scriptstyle 0  &\scriptstyle 0  &\scriptstyle 0  &\scriptstyle 0  &\scriptstyle 0\cr 
        \scriptstyle  \times & \none   & \none   & \none   &\none      &\none    &\none    &\scriptstyle 5  &\scriptstyle 0  &\scriptstyle 0  &\scriptstyle 0  &\scriptstyle 0  &\scriptstyle 0\cr 
        \scriptstyle  \times & \none   & \none   &\none    &\none      &\none    &\none    &\scriptstyle 5  &\scriptstyle 0  &\scriptstyle 0  &\scriptstyle 0  &\scriptstyle 0  &\scriptstyle 0\cr 
        \scriptstyle  \times & \none   &\none    &\none    &\none      &\none    &\none    &\scriptstyle 5  & \scriptstyle 0  &\scriptstyle 0  &\scriptstyle 0  &\scriptstyle 0  &\scriptstyle 0\cr 
        \none & \none &\none    & \none  & \none  & \none   & \none 
        \end{ytableau}
    \end{adjustbox}

    Let us now move the second column of ``$n-1$'' blocks to the end of the collection (by mutating all the zeros at its right), and then let us send it to the beginning via Serre functor. We find:

    \begin{adjustbox}{width=.5\textwidth,center}
        \ytableausetup{boxsize=2em}
        \begin{ytableau}
        \none & \none & \none & \none & \none & \none & \scriptstyle 5 \cr 
        \none & \none & \none & \none & \none & \none & \scriptstyle 5 \cr 
        \none & \none & \none & \none & \none & \none & \scriptstyle 5 \cr 
        \none & \none & \none & \none & \none & \none & \scriptstyle 5 \cr 
        \none & \none & \none & \none & \none & \none & *(lightgray) \scriptstyle 5\cr 
        \none & \none & \none & \none & \none & \none &\scriptstyle 5  &\scriptstyle 0  &\scriptstyle 0  &\scriptstyle 0  &\scriptstyle 0  &\scriptstyle 0  &\scriptstyle 0\cr 
        \scriptstyle  \times & \none   & \none   & \none   & \none   & \none   &\scriptstyle 5  &\scriptstyle 0 &\scriptstyle 0  &\scriptstyle 0  &\scriptstyle 0  &\scriptstyle 0   &\scriptstyle 0 \cr 
        \scriptstyle  \times & \none   & \none   & \none   & \none   & \none   &\scriptstyle 5  &\scriptstyle 0  &\scriptstyle 0  &\scriptstyle 0  &\scriptstyle 0    &\scriptstyle 0  &\scriptstyle 0\cr 
        \scriptstyle  \times & \none   & \none   & \none   & \none   & \none   &\scriptstyle 5  &\scriptstyle 0  &\scriptstyle 0  &\scriptstyle 0    &\scriptstyle 0  &\scriptstyle 0  &\scriptstyle 0\cr 
        \scriptstyle  \times & \none   & \none     & \none   & \none   & \none   &\scriptstyle 5    &\scriptstyle 0  &\scriptstyle 0  &\scriptstyle 0  &\scriptstyle 0  &\scriptstyle 0&\scriptstyle 0\cr 
        \scriptstyle  \times & \none   & \none   & \none   & \none   & \none   &\scriptstyle 5  &\scriptstyle 0    &\scriptstyle 0  &\scriptstyle 0  &\scriptstyle 0  &\scriptstyle 0  &\scriptstyle 0\cr 
        \scriptstyle  \times & \none   & \none   & \none   & \none     & \none   &\scriptstyle 5  & \none &\scriptstyle \times  &\scriptstyle \times  &\scriptstyle \times  &\scriptstyle \times  &\scriptstyle \times\cr 
        \scriptstyle  \times & \none   & \none   & \none   & \none     &\none    &\none    & \none  &\scriptstyle \times  &\scriptstyle \times  &\scriptstyle \times  &\scriptstyle \times  &\scriptstyle \times\cr 
        \scriptstyle  \times & \none   & \none   & \none   &\none      &\none    &\none    &\none  &\scriptstyle \times  &\scriptstyle \times  &\scriptstyle \times  &\scriptstyle \times  &\scriptstyle \times\cr 
        \scriptstyle  \times & \none   & \none   &\none    &\none      &\none    &\none    &\none  &\scriptstyle \times  &\scriptstyle \times  &\scriptstyle \times  &\scriptstyle \times  &\scriptstyle \times\cr 
        \scriptstyle  \times & \none   &\none    &\none    &\none      &\none    &\none    &\none  & \scriptstyle \times  &\scriptstyle \times &\scriptstyle \times  &\scriptstyle \times  &\scriptstyle \times \cr 
        \end{ytableau}
    \end{adjustbox}

    As for the even case, the last step consists in applying Corollary \ref{cor:rule2} to the last ``$n-1$'' block, and sending the new block to the beginning via Serre functor. Hence, we produce the final chessboard:

    \begin{adjustbox}{width=.5\textwidth,center}
        \ytableausetup{boxsize=2em}
        \begin{ytableau}
        \none & \none & \none & \none & \none & \none & \scriptstyle 5 \cr 
        \none & \none & \none & \none & \none & \none & \scriptstyle 5 \cr 
        \none & \none & \none & \none & \none & \none & \scriptstyle 5 \cr 
        \none & \none & \none & \none & \none & \none & \scriptstyle 5 \cr 
        \none & \none & \none & \none & \none & \none & \scriptstyle 5 \cr 
        \none & \none & \none & \none & \none & \none & *(lightgray) \scriptstyle 5\cr 
        \none & \none & \none & \none & \none & \none &\scriptstyle 5  &\scriptstyle 0  &\scriptstyle 0  &\scriptstyle 0  &\scriptstyle 0  &\scriptstyle 0  &\scriptstyle 0\cr 
        \scriptstyle  \times & \none   & \none   & \none   & \none   & \none   &\scriptstyle 5  &\scriptstyle 0 &\scriptstyle 0  &\scriptstyle 0  &\scriptstyle 0  &\scriptstyle 0   &\scriptstyle 0 \cr 
        \scriptstyle  \times & \none   & \none   & \none   & \none   & \none   &\scriptstyle 5  &\scriptstyle 0  &\scriptstyle 0  &\scriptstyle 0  &\scriptstyle 0    &\scriptstyle 0  &\scriptstyle 0\cr 
        \scriptstyle  \times & \none   & \none   & \none   & \none   & \none   &\scriptstyle 5  &\scriptstyle 0  &\scriptstyle 0  &\scriptstyle 0    &\scriptstyle 0  &\scriptstyle 0  &\scriptstyle 0\cr 
        \scriptstyle  \times & \none   & \none     & \none   & \none   & \none   &\scriptstyle 5    &\scriptstyle 0  &\scriptstyle 0  &\scriptstyle 0  &\scriptstyle 0  &\scriptstyle 0&\scriptstyle 0\cr 
        \scriptstyle  \times & \none   & \none   & \none   & \none   & \none   &\scriptstyle 5  \cr 
        \scriptstyle  \times & \none   & \none   & \none   & \none     & \none   &\scriptstyle 5  & \none &\scriptstyle \times  &\scriptstyle \times  &\scriptstyle \times  &\scriptstyle \times  &\scriptstyle \times\cr 
        \scriptstyle  \times & \none   & \none   & \none   & \none     &\none    &\none    & \none  &\scriptstyle \times  &\scriptstyle \times  &\scriptstyle \times  &\scriptstyle \times  &\scriptstyle \times\cr 
        \scriptstyle  \times & \none   & \none   & \none   &\none      &\none    &\none    &\none  &\scriptstyle \times  &\scriptstyle \times  &\scriptstyle \times  &\scriptstyle \times  &\scriptstyle \times\cr 
        \scriptstyle  \times & \none   & \none   &\none    &\none      &\none    &\none    &\none  &\scriptstyle \times  &\scriptstyle \times  &\scriptstyle \times  &\scriptstyle \times  &\scriptstyle \times\cr 
        \scriptstyle  \times & \none   &\none    &\none    &\none      &\none    &\none    &\none  & \scriptstyle \times  &\scriptstyle \times &\scriptstyle \times  &\scriptstyle \times  &\scriptstyle \times \cr 
        \end{ytableau}
    \end{adjustbox}
    
    Here we finally recognize all the blocks composing $\dbcoh(\Gc r(2, \Ec))$, although they need to be mutated to the end of the collection. By the same exact approach of the previous section we prove the following:
    \begin{proposition}\label{prop:main_theorem_fibrations_odd}
        Let $X_1$ and $X_2$ be as described in Section \ref{sec:cover_and_fano_fibration}, for $\epsilon = 1$. Then there is a fully faithful functor $\Psi:\dbcoh(X_1)\xhookrightarrow{\,\,\,\,\,}\dbcoh(X_2)$. Moreover, $\dbcoh(X_2)$ admits a semiorthogonal decomposition as follows:
        \begin{equation*}
            \dbcoh(X_2) = \langle \Psi(\dbcoh(X_1)), F_1, \dots, F_{2n^2-n-1}\rangle
        \end{equation*}
        where, for all $i$, $F_i$ is an admissible subcategory given by the image of $\dbcoh(B)$ through a fully faithful functor.
    \end{proposition}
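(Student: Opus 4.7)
The plan is to mirror the strategy of Proposition \ref{prop:main_theorem_fibrations_even}: one starts from the semiorthogonal decomposition of $\dbcoh(\Mc)$ containing $i_{1*}\bar q_1^*\dbcoh(X_1)$ given by \ref{eq:sod_for_M_with_line_bundles} (now in the case $\epsilon = 1$), performs the chain of chess-game mutations depicted in Section \ref{sec:chess_game_fibrations_odd}, and arrives at a decomposition of $\dbcoh(\Mc)$ whose right end coincides with $q_2^*\dbcoh(\Gc r(2, \Ec))\otimes\Lc$, written according to the $\epsilon = 1$ version of \ref{eq:sod_Gr2E}. Combined with the Cayley-trick decomposition \ref{eq:decomposition_Mc_from_X2}, this yields both the fully faithful embedding $\Psi$ and the explicit semiorthogonal complement $\langle F_1,\ldots,F_{2n^2-n-1}\rangle$.

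Concretely, the steps are as follows. First, apply the Serre functor of $r_1^*\dbcoh(B)\subset\dbcoh(\Gc r(1,\Ec))$, pulled back through $p_1$, to cyclically translate each row of the initial chessboard into the rectangular arrangement shown in Section \ref{sec:chess_game_fibrations_odd}, with a yellow staircase in the lower-left region. Second, carry out the \emph{first upward phase}: starting from the shortest yellow row and moving progressively to the longest, iteratively apply Lemma \ref{lem:rule1} (which in turn relies on Lemmas \ref{lem:chess_game_exts} and \ref{lem:chess_game_exts_II}) so as to collapse the yellow staircase into a strip of symmetric-power blocks $1,\dots,n-1$ along the top row, while introducing the auxiliary subcategories denoted $\times$ along the leftmost column. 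Third, apply the Serre functor again to translate the strip and expose a new yellow staircase in the bottom rows, and perform a \emph{second upward phase} of the same type, which produces a column of blocks of symmetric power $n-1$ on the right. Fourth, apply Corollary \ref{cor:rule2} to the last such block and translate it to the beginning via the Serre functor to obtain the final chessboard.

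At this stage, the rightmost column of the final chessboard reproduces, up to twist by $\Lc$, the Kuznetsov--Samokhin decomposition of $\dbcoh(\Gc r(2,\Ec))$ for $\epsilon = 1$ given by \ref{eq:sod_Gr2E}. Mutating this column to the right end of the collection and comparing with \ref{eq:decomposition_Mc_from_X2}, one obtains
\[
\dbcoh(\Mc) = \langle \Phi\, i_{1*}\bar q_1^*\dbcoh(X_1),\; F_1,\ldots,F_{2n^2-n-1},\; i_{2*}\bar q_2^*\dbcoh(X_2)\rangle,
\]
where $\Phi$ is the autoequivalence induced by the sequence of mutations and each $F_i$ is a twist of $p_1^*r_1^*\dbcoh(B)$ by an exceptional object of the form $\Sym^m\Uc_2^\vee(xh_1+yh_2)$. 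The count $2n^2-n-1$ is fixed by subtracting the number of blocks absorbed into the $q_2^*\dbcoh(\Gc r(2,\Ec))\otimes\Lc$ end from the total in the initial arrangement \ref{eq:sod_for_M_with_line_bundles}. Restricting to the admissible subcategory $i_{2*}\bar q_2^*\dbcoh(X_2)^\perp\subset\dbcoh(\Mc)$ and using that $i_{2*}\bar q_2^*$ is a fully faithful embedding of $\dbcoh(X_2)$, one sets $\Psi := (i_{2*}\bar q_2^*)^{-1}\circ\Phi\circ i_{1*}\bar q_1^*$ to conclude.

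The main obstacle is to verify that every mutation in the odd chess game is actually available, since the Ext-vanishings underlying Lemmas \ref{lem:chess_game_exts}, \ref{lem:chess_game_exts_II}, Lemma \ref{lem:rule1} and Corollary \ref{cor:rule2} rest on Borel--Weil--Bott computations that are sensitive to the parity of $\operatorname{rk}\Ec$. The delicate point is that for $\epsilon = 1$ the block $\Ac$ is absorbed into $\Bc$ inside \ref{eq:sod_Gr2E}, so the shape of both the final column and the intermediate staircases changes; one must re-examine Lemmas \ref{lem:main_coh_lemma} and \ref{lem:the_ext_for_the_second_part_of_rule_two} in the case $\epsilon = 1$ to confirm that no unwanted Ext-obstructions appear at any step. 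Once this check is in place, the argument proceeds verbatim as in the even case, with the only substantive change being the numerical bookkeeping that replaces $2n^2-n$ by $2n^2-n-1$.
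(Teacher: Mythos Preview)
Your approach is the same as the paper's: run the chess game of Section \ref{sec:chess_game_fibrations_odd} and conclude exactly as in Proposition \ref{prop:main_theorem_fibrations_even}. Two points need correction.

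First, the displayed decomposition near the end is wrong. After mutating the recognized column to the right end you obtain
\[
\dbcoh(\Mc) = \langle \Phi\, i_{1*}\bar q_1^*\dbcoh(X_1),\; F_1,\ldots,F_{2n^2-n-1},\; q_2^*\dbcoh(\Gc r(2,\Ec))\otimes\Lc\rangle,
\]
not a decomposition ending in $i_{2*}\bar q_2^*\dbcoh(X_2)$. Comparing this with the Cayley-trick decomposition \ref{eq:decomposition_Mc_from_X2}, the \emph{left} orthogonal of $q_2^*\dbcoh(\Gc r(2,\Ec))\otimes\Lc$ is precisely $i_{2*}\bar q_2^*\dbcoh(X_2)$, so
\[
i_{2*}\bar q_2^*\dbcoh(X_2) = \langle \Phi\, i_{1*}\bar q_1^*\dbcoh(X_1),\; F_1,\ldots,F_{2n^2-n-1}\rangle,
\]
and since $i_{2*}\bar q_2^*$ is fully faithful this transports to the claimed decomposition of $\dbcoh(X_2)$. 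Your version, with $i_{2*}\bar q_2^*\dbcoh(X_2)$ on the right, double-counts and the subsequent sentence about ``restricting to $i_{2*}\bar q_2^*\dbcoh(X_2)^\perp$'' is not the correct operation.

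Second, your worry that Lemmas \ref{lem:main_coh_lemma} and \ref{lem:the_ext_for_the_second_part_of_rule_two} must be ``re-examined'' for $\epsilon = 1$ is unnecessary: both are stated and proved uniformly in $\epsilon\in\{0,1\}$ (the bounds $t\leq 2n-3+\epsilon$ and $r\leq n-1$ already absorb the parity), so Lemmas \ref{lem:chess_game_exts}, \ref{lem:chess_game_exts_II}, Lemma \ref{lem:rule1} and Corollary \ref{cor:rule2} apply without modification. The only genuine difference from the even case is the shape of the chessboards, which is exactly what Section \ref{sec:chess_game_fibrations_odd} records.
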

    Together with Proposition \ref{prop:main_theorem_fibrations_even}, Proposition \ref{prop:main_theorem_fibrations_odd} completes the proof of Theorem \ref{thm:main_theorem_fibrations_intro}.
    \begin{remark}
        Note that Theorem \ref{thm:main_theorem_fibrations_intro} holds without assuming Conditions \ref{cond:restriction_surjective} and \ref{cond:dimension_of_B}. In fact, the sole Condition \ref{cond:bpf} is necessary to have smoothness of $X_1$, $X_2$ and $\Mc$, which, in turn, allows to write the semiorthogonal decompositions of Section \ref{sec:sods_for_M} and proceed with the chess game.
    \end{remark}
    \subsection{A note on tilting bundles}
        In the paper \cite{ourpaper_generalizedroofs}, the case $B = \{pt\}$, which had previously  been addressed by \cite{leung_xie}, has been revisited with a different approach, based on GLSM phase transitions and the construction of a window category. The approach, inspired by \cite{addingtondonovansegal}, is the following:
        \begin{enumerate}
            \item observe that $G(1, 2n+\epsilon)$, $G(2, 2n+\epsilon)$, and the total spaces $X_+$ and $X_-$ respectively of $\Qc_{G(1, 2n+\epsilon)}(-2)$ and of $\Uc_{G(2, 2n+\epsilon)}^\vee(-2)$ are GIT quotients, and that $X_+$ and $X_-$  are both birartional to an Artin stack $[V/GL(2n+\epsilon-2)]$ where $V$ is a vector space. There is a function $f:V\arw\CC$ such that the zero loci $Y_+$ and $Y_-$ of general secions of the duals of $\Qc_{G(1, 2n+\epsilon)}(-2)$ and of $\Uc_{G(2, 2n+\epsilon)}^\vee(-2)$ can be realized as the critical loci of $f$ restricted to $X_+$ and $X_-$
            \item There is a tilting bundle of $X_+$ which, under the birational map $X_+\arw X_-$, is sent to a partially tilting bundle
            \item By means of Kn\"orrer periodicity \cite{shipman} and passing to derived category of matrix factorizations, one has a composition of functors:
            \begin{equation*}
                \dbcoh(Y_-)\simeq\operatorname{DMF}(X_-, f)\xhookrightarrow{\,\,\,\,\,}\operatorname{DMF}(X_+, f)\simeq\dbcoh(Y_+).
            \end{equation*}
        \end{enumerate}
        This technique, in principle, could be used for the present setting, giving a simpler and shorter proof of Theorem \ref{thm:main_theorem_fibrations_intro}, which is also more suitable to generalization to the more complicated case of $\Gc(k, k+1, \Ec)$. However, a tilting bundle on $G(1, 2n+\epsilon)$ does not directlty imply the existence of a tilting bundle on a Grassmann bundle with fiber $G(1, 2n+\epsilon)$ on any smooth base: therefore, we opted for the argument presented in Section \ref{sec:derived_cats_cover_and_fibration}.
\section{Examples}\label{sec:examples}
    Let us discuss some concrete examples. We address the case where $B, \Gc r(i, \Ec)$ and $\Fc l(1,2,\Ec)$ are flag varieties themselves: the outcome is an infinite series of derived embeddings between covers of $B$ and Fano fibrations over $B$, where both the cover and the fibration are cut by general sections of irreducible, homogeneous vector bundles. More precisely, fix $n\leq 2$ and any strictly increasing partition $\mu = \{\mu_1, \dots, \mu_r\}$ of arbitrary length $r$. Then, we choose the data of $\Ec\arw B$ so that $\Fc l (1, 2, \Ec) = F(\mu_1, \dots, \mu_r, \mu_r + 1, \mu_r + 2, \mu_r + 2n+\epsilon)$. Such variety has exactly $r+2$ extremal contractions to flag varieties of Picard rank $r+1$, which in turn have $r+1$ extremal contractions to flag varieties of Picard rank $r$. This process can obviously be iterated, and eventually defines locally trivial morphisms
    \begin{equation*}
        \begin{tikzcd}[row sep = tiny, column sep = large, /tikz/column 1/.append style={anchor=base east} ,/tikz/column 2/.append style={anchor=base west}]
            F(\mu_1, \dots, \mu_r, \mu_r + 1, \mu_r + 2, \mu_r + 2n+\epsilon) \ar{r}{\phi_j} &  G(j, \mu_r + 2n+\epsilon)\\
            (x_1, \dots, x_{r+2}) \ar[maps to]{r} & x_j
        \end{tikzcd}
    \end{equation*}
    where $j\in\{\mu_1, \dots, \mu_r, \mu_r+1, \mu_r+2\}$. Fibers of these maps are products of flag varieties. We call $\Uc_j$ the pullback of the tautological bundle through $\phi_j$, a vector bundle of rank $j$, and we denote by $\Qc_j$ the rank $\mu_r + 2n+\epsilon-j$ vector bundle defined as the quotient of $\Oc^{\oplus(2n+\epsilon)}$ by the latter, via the tautological embedding. Let us use the notation $\Oc(-h_j):=\det\Uc_j$ for line bundles. We will also use the shorthand notation $F(\mu, \mu_r + 1, \mu_r + 2, \mu_r + 2n+\epsilon)$ for $F(\mu_1, \dots, \mu_r, \mu_r + 1, \mu_r + 2, \mu_r + 2n+\epsilon)$. In this context, the diagram \ref{eq:main_diagram} specializes to:
    \begin{equation}\label{eq:main_diagram_examples}
        \begin{tikzcd}[row sep = huge, column sep = scriptsize]
            & F(\mu, \mu_r + 1, \mu_r + 2, \mu_r+2n+\epsilon)\ar[swap]{dl}{p_1}\ar{dr}{p_2} &  \\
            F(\mu, \mu_r + 1, \mu_r+2n+\epsilon)\ar[swap]{dr}{r_1} & & F(\mu, \mu_r + 2, \mu_r+2n+\epsilon)\ar{dl}{r_2} \\
            &F(\mu, \mu_r+2n+\epsilon)& 
        \end{tikzcd}
    \end{equation}
    Given any dominant \footnote{ the dominant condition is required in order to have $\Lc$ satisfy Condition \ref{cond:bpf}, which, in turn, is necessary to have smoothness of $X_1$ and $X_2$.} weight $\nu = \nu_1\omega_1+\dots+\nu_r\omega_r$, we consider the line bundle $\Lc := \Ec_{\nu+\omega_{\mu_r+1}+\omega_{\mu_r+2}}$. Then, one has:
    \begin{equation*}
        \begin{split}
            p_{1*}\Lc &\simeq \Ec_\nu\otimes \wedge^{\mu_r+2n+\epsilon-\mu_r-2}\Qc_{\mu_r+1}(h_{\mu_r+1})\simeq \Ec_\nu\otimes \Qc_{\mu_r+1}^\vee(2h_{\mu_r+1})\\
            p_{2*}\Lc &\simeq \Ec_\nu\otimes \Pc(2h_{\mu_r + 2})
        \end{split}
    \end{equation*}
    where $\Pc(h_{\mu_r+2})$ is defined as (the pullback of) the homogeneous, irreducible, globally generated vector bundle of highest weight $\omega_{\mu_r+1}$ on $F(\mu_r, \mu_r+2, \mu_r+2n+\epsilon)$. Note that one has:
    \begin{equation*}
        0\arw\Uc_{\mu_r}(h_{\mu_r+2})\arw\Uc_{\mu_r+2}(h_{\mu_r+2})\arw\Pc(h_{\mu_r + 2})\arw 0.
    \end{equation*}
    By adjunction, one easily sees that for every $x\in F(\mu_1, \dots, \mu_r, \mu_r+2n+\epsilon)$:
    \begin{equation*}
        \omega_{X_1}|_{r_1^{-1}(x)} = \Oc(2n+\epsilon-3) \simeq \omega_{X_2}|_{r_2^{-1}(x)}^\vee,
    \end{equation*}
    and therefore, for our assumptions on $\mu, r$ and $n$, we see that the general fiber of $X_2$ is a smooth Fano variety of index $2n+\epsilon-3$.
    \subsection{The case of \texorpdfstring{$n=2, \epsilon = 1$}{something}}
        Let us briefly review the simplest case, which corresponds to considering a flag bundle with fiber isomorphic to $F(1, 2, 5)$.
        \subsubsection{A fibration in Fano fourfolds of degree $12$}
            With this data, for any $\mu$ we produce a fibration $X_2\arw F(\mu_1, \dots, \mu_r, \mu_r +5)$ with general fiber isomorphic to a smooth Fano fourfold $Y$ of index $2$, degree $12$ and genus $7$ in $G(2, 5)$, cut by a section of $\Uc_{G(2, 5)}^\vee(1)$. This variety is the case $14$ in the classification \cite[Table 6.5]{iskovskikh}, and it is usually described as a codimension $6$ linear section of the spinor tenfold (i.e a connected component of the orthogonal Grassmannian $OG(5, 10)$): the fact that the description we use is equivalent is well known, and it can be easily deduced by the argument presented in \cite[Section 13]{coates_corti_galkin_kasprzyk}. In particular, the argument there presented relates codimension $7$ general hyperplane sections of a connected component of $OG(5, 10)$ to general sections of $\Uc_{G(2, 5)}^\vee(1)\oplus\Oc(1)$, and it can be adapted to our case verbatim. By Lemma \ref{lem:the_cover_of_B} we immediately see that for $\dim(F(\mu, \mu_r+5))<10$  the map $X_1\arw F(\mu, \mu_r +5)$ is a $11:1$ cover. In particular, by choosing $\mu = \{1\}$, we get a cover of $\PP^5$. Already by choosing $\mu = \{2\}$ we have a generically $11:1$ morphism to $G(2, 7)$, with positive dimensional fibers over a zero-dimensional subset, by all other choice of $\mu$ there is a positive dimensional subvariety over which the morphism is not finite.
        \subsubsection{Relation with Kuznetsov's collection for $Y$}
            By Theorem \ref{thm:main_theorem_fibrations_intro},
            there is a fully faithful functor $\dbcoh(X_1)\subset\dbcoh(X_2)$ and a semiorthogonal decomposition:
            \begin{equation*}
                \dbcoh(X_2) = \langle \Psi\dbcoh(X_1), F_1, \dots, F_5\rangle
            \end{equation*}
            A full exceptional collection for the general fiber of $Y$ of $X_2$ can be found if we consider the case $B = \{pt\}$: 
            \begin{equation}\label{collection_fano_4_degree_12}
                \dbcoh(Y) = \langle P_1, \dots, P_{11}, F_1, \dots F_5\rangle
            \end{equation}
            where $P_1, \dots P_{11}$ are the objects coming from the derived category of a set of $11$ distinct points. In \cite{kuznetsov_spinor} a different full exceptional collection for $Y$ has been produced, using the fact that $Y$ is a linear section of the spinor tenfold (which admits a Lefschetz, rectangular full exceptional collection of vector bundles): if we call $\Uc_+$ the tautological bundle of the spinor tenfold (the pullback of the tautological bundle of $G(5, 10)$), one has:
            \begin{equation*}
                \dbcoh(Y) = \langle E_1, \dots, E_{12}, \Oc, \Uc_+^\vee, \Oc(1), \Uc_+^\vee(1)\rangle.
            \end{equation*}
            where the $E_i$'s generate the derived category of a set of $12$ distinct points (a codimension 10 linear section of the homological projective dual of the spinor tenfold, where the latter, remarkably, is isomorphic to the spinor tenfold itself). Note that both collections have length 16. It would be interesting to understand whether the two collections can be related by a sequence of mutations, or, more generally, if there is any geometric relation between the collections.
            \begin{remark}
                Note that the embedding of categories $\langle P_1, \dots, P_{11}\rangle \subset \dbcoh(Y)$ is already a consequence of \cite{ourpaper_generalizedroofs}. However, proving that the right orthogonal complement of $\langle P_1, \dots, P_{11}\rangle$ is itself generated by an exceptional collection (and thus obtaining the collection \ref{collection_fano_4_degree_12}) is a consequence of the more explicit approach we discussed in Section \ref{sec:derived_cats_cover_and_fibration}.
            \end{remark}
\section{Grassmann flips on a base}\label{sec:grassmann_flip}
    \subsection{Simple flips and flag bundles}
        Consider a birational map between two smooth, projective varieties $\Xc_1$ and $\Xc_2$, resolved by two blowups $\pi_1:\Xc\arw \Xc_1$ and $\pi_2:\Xc\arw\Xc_2$. This is an instance of a \emph{simple flip} as described in \cite[Definition 2.2]{leung_xie_new}. Let us focus on the situation where the centers of the blowups are respectively isomorphic to the Grassmann bundles $\Gc r(1, \Ec)$ and $\Gc r(2, \Ec)$ for a suitable choice of a vector bundle $\Ec$ on a base $B$. The geometry described in Section \ref{sec:geoemtry_part_1}, together with \cite[Proposition 2.3]{leung_xie_new}, allows to draw the following diagram (cf. \cite[Diagram 2.4]{leung_xie_new}):
        \begin{equation}\label{eq_keqdiagram}
            \begin{tikzcd}[row sep=large, column sep = scriptsize]
                &   & \Fc l(1, 2, \Ec)\ar[hook]{d}{\sigma}\ar[swap]{lldd}{p_1}\ar{rrdd}{p_2} &  &\\  
                &   & \Xc \ar[swap]{ld}{\pi_1}\ar{rd}{\pi_2} & &\\
                \Gc r(1, \Ec)\ar[hook]{r}\ar[swap]{rrdd}{r_1}&   \Xc_1\ar[dashed]{rr}{\mu} & & \Xc_2\ar[hookleftarrow]{r} &\Gc r(2, \Ec)\ar{lldd}{r_2}\\
                &&\color{white}{AAAA}&&\\
                &&B&&
            \end{tikzcd}
        \end{equation}
        In particular, this is an instance of a simple flip of homogeneous type, as described in \cite{leung_xie_new}. 
        Note that, for $B = \{pt\}$, we obtain the construction addressed by \cite{leung_xie}. In light of this, it is reasonable to expect a derived embedding $\dbcoh(\Xc_1)\subset\dbcoh(\Xc_2)$. The goal of this section is to produce such embedding.
    \subsection{Semiorthogonal decompositions for \texorpdfstring{$\Xc$}{something}}
        By applying a result of Orlov \cite{orlovblowup} on semiorthogonal decompositions of blowups, we can construct two different semiorthogonal decompositions for $\Xc $:
        \begin{equation}\label{eq:initial_sod_for_X0_from_left}
            \begin{split}
                \dbcoh(\Xc ) & \simeq \langle \sigma_*(p_1^*\dbcoh(\Gc r(1, \Ec)\otimes\Lc^{\otimes(-2n-\epsilon+3)}), \dots, \sigma_*(p_1^*\dbcoh(\Gc r(1, \Ec)\otimes\Lc^{\otimes(-1)}), \pi_1^*\dbcoh(\Xc_1) \rangle \\
                & \simeq \langle \sigma_*(p_2^*\dbcoh(\Gc r(2, \Ec)\otimes\Lc^{\otimes(-1)}), \pi_2^*\dbcoh(\Xc_2) \rangle
            \end{split}
        \end{equation}
        In light of the semiorthogonal decomposition \ref{eq:sod_Gr1E} we can rewrite the above as follows:
        \begin{equation}\label{eq:sod_from_left_flip}
            \begin{split}
                \dbcoh(\Xc ) & \simeq \langle \mathbf T^0_{-2n-\epsilon+3, -2n-\epsilon+3}, \dots, \mathbf T^0_{3, -2n-\epsilon+3}, \\
                & \hspace{40pt} \vdots\hspace{100pt}\vdots \\
                 & \hspace{30pt} \mathbf T^0_{-1, -1}, \dots\dots\dots\dots, \mathbf T^0_{2n+\epsilon-1, -1}, \pi_1^*\dbcoh(\Xc_1) \rangle.
            \end{split}
        \end{equation}
        Here we introduced the subcategories:
        \begin{equation*}
            \begin{split}
                \mathbf T^m_{x,y}:= & \sigma_*(\Sym^m\wt\Uc_2^\vee(xh_1+yh_2)\otimes \tau^*\dbcoh(B)) \\
                \mathbf B^m_{x,y}:= & \langle \mathbf T^0_{x,y}, \mathbf T^1_{x,y}, \dots, \mathbf T^m_{x,y}\rangle.
            \end{split}
        \end{equation*}
        where $\tau := p_1\circ r_1 = p_2\circ r_2$.
        \begin{lemma}\label{lem:chess_game_exts_flip}
            For $0\leq r\leq\min(t-2, n-1)$ and $r\neq t-1$ one has:
            \begin{equation*}
                \LL_{\mathbf T^0_{t, 0}}\mathbf T^r_{-1, 1} \simeq \mathbf T^r_{-1, 1}.
            \end{equation*}
            Moreover, for $r = t-1$:
            \begin{equation*}
                \LL_{\mathbf T^0_{t, 0}}\mathbf T^{t-1}_{-1, 1} \simeq \mathbf T^t_{0,0}.
            \end{equation*}
        \end{lemma}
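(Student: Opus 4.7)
The proof follows the template of Lemma \ref{lem:chess_game_exts}, replacing the divisor $\iota:\Mc\hookrightarrow\Fc l(1,2,\Ec)$ by the divisor $\sigma:\Fc l(1,2,\Ec)\hookrightarrow\Xc$ (the exceptional divisor of the blowup $\pi_1$). First, I realize $\mathbf T^0_{t,0}$ as the essential image of the fully faithful functor
\[
g:\dbcoh(B)\hookrightarrow\dbcoh(\Xc),\qquad E\mapsto\sigma_*\Oc(th_1)\otimes\pi_1^*r_1^*E,
\]
whose right adjoint is $g^!(R)=(\pi_1\circ r_1)_*R\sHom_\Xc(\sigma_*\Oc(th_1),R)$. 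The mutation $\LL_{\mathbf T^0_{t,0}}(R)$ is determined by the triangle $gg^!R\arw R\arw \LL_{\mathbf T^0_{t,0}}(R)$, in analogy with \ref{eq:mutation_distinguished_triangle}.

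To evaluate $g^!$ on an object $R=\sigma_*(\Sym^r\wt\Uc_2^\vee(-h_1+h_2)\otimes\tau^*E)\in\mathbf T^r_{-1,1}$, I use the adjunction $\sigma_*\dashv\sigma^!$ with $\sigma^!(-)=\sigma^*(-)\otimes\Nc[-1]$, where $\Nc:=N_{\Fc l/\Xc}$ is the normal line bundle of the exceptional divisor. Combined with the standard splitting $L\sigma^*\sigma_*F\simeq F\oplus (F\otimes\Nc^\vee[1])$ for a divisor embedding, this yields a decomposition
\[
R\sHom_\Xc(\sigma_*\Oc(th_1),R)\simeq \sigma_*R\sHom_{\Fc l}(\Oc(th_1),G)\oplus\sigma_*R\sHom_{\Fc l}(\Oc(th_1),G\otimes\Nc)[-1],
\]
where $G=\Sym^r\wt\Uc_2^\vee(-h_1+h_2)\otimes\tau^*E$. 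Pushing down along $\tau$ and analyzing fiberwise over the base, each summand reduces, by the same fiberwise Borel--Weil--Bott argument as in Lemma \ref{lem:chess_game_exts}, to Ext computations on $F(1,2,2n+\epsilon)$ between $\Oc(t,0)$ and $\Sym^r\Uc_{G(2,2n+\epsilon)}^\vee(-1,1)$ (respectively, its twist by the fiberwise restriction of $\Nc$), governed by Lemma \ref{lem:main_coh_lemma}.

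For $0\leq r\leq\min(t-2,n-1)$ with $r\neq t-1$, all contributions to $g^!R$ vanish, so the mutation triangle forces $\LL_{\mathbf T^0_{t,0}}\mathbf T^r_{-1,1}\simeq\mathbf T^r_{-1,1}$. When $r=t-1$, exactly one of the two pieces contributes a one-dimensional cohomology group on each fiber over $B$, so $gg^!R$ is (up to a line bundle pullback and a shift) an object of $\mathbf T^0_{t,0}$; the cone in the mutation triangle is then identified with $\mathbf T^t_{0,0}$ via an appropriate twist of the Sym-resolution \ref{eq:symresolution_flag}, exactly mirroring the final step of Lemma \ref{lem:chess_game_exts}.

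The main technical point is the precise identification of the normal line bundle $\Nc$ and the verification that its contribution does not spoil the vanishing pattern from Lemma \ref{lem:main_coh_lemma}. By the projective bundle presentation $\Fc l(1,2,\Ec)\simeq\PP_{\Gc r(1,\Ec)}(p_{1*}\Lc)$, one expects $\Nc$ to coincide with $\Lc^{-1}$ up to a pullback from $B$; granting this, the fiberwise input is literally the same as in the proof of Lemma \ref{lem:chess_game_exts}, and the two arguments run in parallel. Any residual $B$-pullback twist is absorbed harmlessly into the functor $g^!$, since cohomology computations are performed fiberwise and only the shifts matter for triggering the vanishing or the one-dimensional contribution.
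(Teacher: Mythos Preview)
Your approach is essentially the same as the paper's: set up the mutation triangle via the adjoint pair $(g,g^!)$, use the divisorial adjunction for $\sigma$ to unwind $R\sHom_\Xc(\sigma_*\Oc(th_1),R)$, and reduce fiberwise over $B$ to the same Borel--Weil--Bott input as in Lemma~\ref{lem:chess_game_exts}. In fact you are more explicit than the paper about the two-term contribution coming from $L\sigma^*\sigma_*$ and about identifying the normal bundle with $\Lc^{-1}$ (up to a $B$-pullback), which is precisely what makes the fiberwise computation coincide with the two terms appearing in the proof of Lemma~\ref{lem:main_coh_lemma}.

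Two small points to fix. First, your functor $g$ is written as $E\mapsto\sigma_*\Oc(th_1)\otimes\pi_1^*r_1^*E$ and $g^!$ involves $(\pi_1\circ r_1)_*$, but $r_1$ is a map $\Gc r(1,\Ec)\to B$, not $\Xc_1\to B$, so $\pi_1^*r_1^*$ does not type-check; there is in general no morphism $\Xc\to B$. The correct formulation (as in the paper) is $g(E)=\sigma_*(\tau^*E\otimes\Oc(th_1))$, with the pushforward to $B$ taken along $\tau$ after passing through $\sigma$. Second, the ``standard splitting'' $L\sigma^*\sigma_*F\simeq F\oplus F\otimes\Nc^\vee[1]$ is really a distinguished triangle in general; however, for your purposes the triangle suffices, since one of the two fiberwise terms always vanishes (or both do), and the conclusion is unaffected.
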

        \begin{proof}
            As for the similar claim in Section \ref{sec:derived_cats_cover_and_fibration}, let us start by observing that any object in $\mathbf T^r_{-1, 1}$ has the form $\sigma_*(\Sym^r\Uc_2^\vee(-h_1+h_2)\otimes\tau^* E)$ for some $E\in\dbcoh(B)$, and its mutation through $\mathbf T^0_{t, 0}$is defined by the distinguished triangle
            \begin{equation*}\label{eq:mutation_flip_distinguished_triangle}
                gg^!\sigma_*(\Sym^r\Uc_2^\vee(-h_1+h_2)\otimes\tau^* E) \arw \sigma_*(\Sym^r\Uc_2^\vee(-h_1+h_2)\otimes\tau^* E) \arw \LL_{\mathbf T^0_{t, 0}}\sigma_*(\Sym^r\Uc_2^\vee(-h_1+h_2)),
            \end{equation*}
            where $g$ and $g^!$ denote the following adjoint pair of functors:
            \begin{equation*}
                \begin{split}
                    g: & E\longmapsto \sigma_*(E\otimes\Oc(th_1))\\
                    g^! : & R \longmapsto \tau_* R\Hc om_{\Fc l(1, 2, \Ec)}(\sigma_*\Oc(th_1), R)
                \end{split}
            \end{equation*}
            Therefore, we can compute the first term of the triangle \ref{eq:mutation_flip_distinguished_triangle} explicitly:
            \begin{equation*}
                \begin{split}
                    gg^!\sigma_*(\Sym^r\Uc_2^\vee(-h_1+h_2)\otimes\tau^* E) 
                    \simeq \hspace{150pt}\\
                    \simeq g\tau_*R\Hc om_{\Fc l(1, 2, \Ec)}(\sigma_*\Oc(th_1), \sigma_*(\Sym^r\Uc_2^\vee(-h_1+h_2)\otimes\tau^*E) \\
                    \simeq g\tau_*R\Hc om_{\Fc l(1, 2, \Ec)}(\sigma^*\sigma_*\Oc(th_1), (\Sym^r\Uc_2^\vee(-h_1+h_2)\otimes\tau^*E) \\
                    \simeq g\tau_*R\Hc om_{\Fc l(1, 2, \Ec)}(\sigma^*\sigma_*\Oc(th_1), (\Sym^r\Uc_2^\vee(-h_1+h_2)\otimes\tau^*E).
                \end{split}
            \end{equation*}
            Note that every fiber of this object has the form:
            \begin{equation*}
                \begin{split}
                    (\tau_*R\Hc om_{\Fc l(1, 2, \Ec)}(\sigma^*\sigma_*\Oc(th_1), (\Sym^r\Uc_2^\vee(-h_1+h_2)\otimes\tau^*E))_b\simeq \\
                    \simeq H^\bullet(\tau^{-1}(b), \tau_*R\Hc om_{\Fc l(1, 2, \Ec)}(\sigma^*\sigma_*\Oc(th_1), (\Sym^r\Uc_2^\vee(-h_1+h_2)\otimes\tau^*E))|_{\tau^{-1}(b)} \\
                    \simeq \Hom_{F(1, 2, \Ec_b)}(\Oc(t, 0), \Sym^r\Uc_{G(2, \Ec_b)^\vee}(-1, 1).
                \end{split}
            \end{equation*}
            Then, we conclude as in the proof of the Claim inside the proof of Lemma \ref{lem:rule1}.
        \end{proof}
        We can now state the main theorem of this section.
        \begin{theorem}[Theorem \ref{main_theorem_DK_intro}]
            \label{main_theorem_DK_body}
            Consider a Grassmann flip $\mu:\Xc_1\dashrightarrow\Xc_2$ such that the exceptional divisor is isomorphic to a flag bundle with fiber $F(1,2,N)$ over a smooth projective base. Then $\dbcoh(\Xc_1)\subset\dbcoh(\Xc_2)$, i.e. $\mu$ satisfies the DK-conjecture.
        \end{theorem}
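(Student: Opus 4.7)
The plan is to mirror the chess game of Section \ref{sec:derived_cats_cover_and_fibration} in the blowup setting, with $\Xc$ playing the role of $\Mc$, the Orlov blowup decompositions in \ref{eq:initial_sod_for_X0_from_left} playing the role of the Cayley trick decompositions, and the subcategories $\mathbf T^m_{x,y}$ playing the role of $\mathbf S^m_{x,y}$. Starting from the two decompositions in \ref{eq:initial_sod_for_X0_from_left}, I would expand the first one (containing $\pi_1^*\dbcoh(\Xc_1)$) via the projective bundle formula \ref{eq:sod_Gr1E}, obtaining the chessboard form \ref{eq:sod_from_left_flip}, and expand the second one (containing $\pi_2^*\dbcoh(\Xc_2)$) via the decomposition \ref{eq:sod_Gr2E} of $\dbcoh(\Gc r(2, \Ec))$. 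The shapes of the resulting chessboards of $\mathbf T$-blocks coincide with those of the $\mathbf S$-chessboards treated in Sections \ref{sec:chess_game_fibrations_even} and \ref{sec:chess_game_fibrations_odd}, once one identifies $i_{1*}\bar q^*\dbcoh(X_1)$ with $\pi_1^*\dbcoh(\Xc_1)$ and the Cayley trick $q_2^*\dbcoh(\Gc r(2, \Ec))\otimes\Lc$-block with $\sigma_*(p_2^*\dbcoh(\Gc r(2, \Ec))\otimes\Lc^{-1})$ (taking into account that the $\pi_i^*\dbcoh(\Xc_i)$ sit on the right rather than the left of the Orlov SODs).

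Second, I would establish the $\mathbf T$-analogues of Lemma \ref{lem:chess_game_exts_II}, Corollary \ref{cor:rule2} and Lemma \ref{lem:rule1}. The key cohomological input has already been provided by Lemma \ref{lem:chess_game_exts_flip}; the remaining arguments transfer without essential modification, because the relevant Ext computations in each step reduce fiberwise to cohomology on $F(1, 2, 2n+\epsilon)$ of twists of $\Sym^\bullet\Uc_{G(2, 2n+\epsilon)}^\vee$, precisely the computations handled by Lemma \ref{lem:main_coh_lemma} and the other Borel--Weil--Bott lemmas in the appendix.

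With these rules in place, I would run the chess-game mutations of Sections \ref{sec:chess_game_fibrations_even} (for $\epsilon = 0$) and \ref{sec:chess_game_fibrations_odd} (for $\epsilon = 1$) on the $\mathbf T$-chessboard, adjusting the direction of the mutations to account for the fact that $\pi_1^*\dbcoh(\Xc_1)$ lies at the right end of the starting SOD. The outcome is a semiorthogonal decomposition of the form
\[
    \dbcoh(\Xc) = \langle \sigma_*(p_2^*\dbcoh(\Gc r(2, \Ec)) \otimes \Lc^{-1}),\; G_1, \ldots, G_\ell,\; \Phi\pi_1^*\dbcoh(\Xc_1) \rangle
\]
for a suitable mutation functor $\Phi$ and admissible subcategories $G_i$. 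Comparing with the second SOD in \ref{eq:initial_sod_for_X0_from_left}, this identifies $\pi_2^*\dbcoh(\Xc_2) = \langle G_1, \ldots, G_\ell, \Phi\pi_1^*\dbcoh(\Xc_1) \rangle$, yielding the desired fully faithful embedding $\dbcoh(\Xc_1) \hookrightarrow \dbcoh(\Xc_2)$.

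The main obstacle I anticipate is one of careful bookkeeping: the Orlov twists $\Lc^{-k}$ appear with negative exponents (as opposed to positive ones in the Cayley trick), so the initial chessboard is reflected horizontally compared to the one in Section \ref{sec:derived_cats_cover_and_fibration}, and the direction of the upward phases must be reversed accordingly. One must also verify that the ``endpoint'' mutations that produce the last $\mathbf T^{n-2}$- or $\mathbf T^{n-1}$-column at the end of Sections \ref{sec:chess_game_fibrations_even} and \ref{sec:chess_game_fibrations_odd} yield exactly $\sigma_*(p_2^*\dbcoh(\Gc r(2, \Ec)) \otimes \Lc^{-1})$ on the nose (up to a line bundle pulled back from $B$, which can be absorbed into an overall twist of the whole SOD); this is expected to follow by the same Serre-functor argument as in the proof of Proposition \ref{prop:main_theorem_fibrations_even}.
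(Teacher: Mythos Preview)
Your proposal is correct and follows essentially the same route as the paper: replace $\mathbf S$-blocks with $\mathbf T$-blocks, invoke Lemma \ref{lem:chess_game_exts_flip} (and its easy companions) to justify the same chess-game moves, and conclude by recognising $\sigma_*(p_2^*\dbcoh(\Gc r(2,\Ec))\otimes\Lc^{-1})$ inside the mutated complement. The paper's proof is in fact even terser than your outline: it simply asserts that $^\perp\pi_1^*\dbcoh(\Xc_1)$ is described by the \emph{same} first chessboard as in Section \ref{sec:chess_game_fibrations_even}, so all the moves of that section carry over verbatim.

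The one point where you overcomplicate things is the anticipated ``horizontal reflection'' and ``reversal of the upward phases''. No reflection is needed. In the Orlov SOD the twists run from $\Lc^{-(2n+\epsilon-2)}$ up to $\Lc^{-1}$, which is the same \emph{increasing} order as $\Lc^{1},\dots,\Lc^{2n+\epsilon-2}$ in the Cayley-trick SOD, just shifted by a global power of $\Lc$. An overall twist does not alter semiorthogonality, so the $\mathbf T$-chessboard has exactly the same shape and orientation as the $\mathbf S$-chessboard, and the mutations run in the same direction. Likewise, the fact that $\pi_1^*\dbcoh(\Xc_1)$ sits on the right rather than the left is immaterial: all mutations take place inside the orthogonal complement and never touch $\pi_1^*\dbcoh(\Xc_1)$ itself.
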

        \begin{proof}
            As for Theorem \ref{thm:main_theorem_fibrations_intro}, the proof for the even and odd case are essentially the same: we will only describe the even case (i.e. $\epsilon = 0$). In light of the decomposition \ref{eq:sod_from_left_flip}, the semiorthogonal complement $^\perp\pi_1^*\dbcoh(\Xc_1)$ can be described by the first chessboard of Section \ref{sec:chess_game_fibrations_even}, where the box in position $(x, y)$ containing a number $m$ now corresponds to the block $\mathbf T^m_{x, y}\subset\dbcoh(\Xc)$ By Lemma \ref{lem:chess_game_exts_flip}, all the chessboards of Section \ref{sec:chess_game_fibrations_even} describe the semiorthogonal complements of categories equivalent to $\pi_1^*\dbcoh(\Xc_1)\subset\dbcoh(\Xc)$, where the equivalence is described by a suitable mutation functor. In particular, the last chessboard corresponds to the following semiorthogonal decomposition:
            \begin{equation}\label{eq:collection_final_chesssboard_flip}
                \begin{split}
                    ^\perp\Xi\pi_1^*\dbcoh(\Xc_1) = \langle
                                            & \hspace{12pt}\mathbf T^{n-2}_{0,0}, \\
                                            & \hspace{12pt}\mathbf T^{n-1}_{0,1}, \mathbf T^0_{1,1}, \dots\dots\dots, \mathbf T^0_{n-1,1} \\
                                            & \times, \mathbf T^{n-1}_{0,2}, \mathbf T^0_{1,2}, \dots\dots\dots, \mathbf T^0_{n-1,2} \\
                                            & \hspace{20pt} \vdots \hspace{100pt} \vdots \\
                                            & \times, \mathbf T^{n-1}_{0,n}, \mathbf T^0_{1,n}, \dots\dots, \mathbf T^0_{n-1,n} \\
                                            & \times, \mathbf T^{n-2}_{0,n+1}, \mathbf T^0_{1,n+1}, \dots\dots, \mathbf T^0_{n-1,n+1} \\
                                            & \hspace{20pt} \vdots \hspace{100pt} \vdots \\
                                            & \times, \mathbf T^{n-2}_{0,2n-3}, \mathbf T^0_{1,2n-3}, \dots\dots, \mathbf T^0_{n-1,2n-3} \\
                                            & \hspace{12pt} \mathbf T^{n-2}_{0,2n-2}\rangle\\
                                            & \hspace{12pt} \mathbf T^{n-2}_{0,2n-1}\rangle,
                \end{split}
            \end{equation}
            where $\Xi$ is the mutation functor. Again, we observe that:
            \begin{equation*}
                \begin{split}
                    \sigma_* (p_1^* \dbcoh(\Gc r(2, \Ec))\otimes\Oc(0, 1))   & = \langle  \mathbf T^{n-1}_{0,1}, \dots, \mathbf T^{n-1}_{0,n}, \mathbf T^{n-2}_{0,n+1}, \dots, \mathbf T^{n-2}_{0,2n-2}, \mathbf T^{n-2}_{0,2n-1},\mathbf T^{n-2}_{0,2n}, \rangle \\
                                            & = \langle \mathbf T^{n-2}_{0,0}, \mathbf T^{n-1}_{0,1}, \dots, \mathbf T^{n-1}_{0,n}, \mathbf T^{n-2}_{0,n+1}, \dots, \mathbf T^{n-2}_{0,2n-2}, \mathbf T^{n-2}_{0,2n-1} \rangle.
                \end{split}
            \end{equation*}
            This allows to mutate the collection \ref{eq:collection_final_chesssboard_flip} so that the category of $\Xc$ can be written as:
            \begin{equation*}
                \dbcoh(\Xc) = \langle \sigma_* (p_1^* \dbcoh(\Gc r(2, \Ec))\otimes\Oc(0, 2)), \Zc, \Xi\pi_1^*\dbcoh(\Xc_1) \rangle.
            \end{equation*}
            where $\Zc$ is an admissible subcategory generated by (mutations of) blocks of the form $\mathbf T_{x,y}^m$. This concludes the proof.
        \end{proof}
\appendix
\section{Computations on homogeneous vector bundles}\label{app:borel_weil_bott}
\begin{lemma}\label{lem:symresolution}
    One has the following short exact sequence on $F(1,2,n)$:
    
    \begin{equation}\label{eq:symresolution}
        0\arw \Sym^{k-1}\Uc_{G(2, n)}^\vee(-1,1)\arw \Sym^k\Uc_{G(2, n)}^\vee\arw\Oc(k, 0)\arw 0
    \end{equation}
\end{lemma}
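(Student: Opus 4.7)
The plan is to derive this sequence from the tautological flag sequence on $F(1,2,n)$. On $F(1,2,n)$ we have the chain $\Uc_1\subset\Uc_2\subset\Oc^{\oplus n}$ (pullbacks of the tautological bundles from $G(1,n)$ and $G(2,n)$), together with the short exact sequence of the rank-one sub and rank-one quotient inside $\Uc_2$,
\begin{equation*}
    0\arw \Uc_1\arw \Uc_2\arw \Uc_2/\Uc_1\arw 0.
\end{equation*}
Dualising gives $0\to(\Uc_2/\Uc_1)^\vee\to\Uc_2^\vee\to\Uc_1^\vee\to 0$, and since $\det\Uc_2=\Uc_1\otimes(\Uc_2/\Uc_1)$ one reads off the identification $(\Uc_2/\Uc_1)^\vee\simeq\Uc_1\otimes\wedge^2\Uc_2^\vee=\Oc(-1,1)$ in the notation of the paper. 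Likewise $\Sym^k\Uc_1^\vee=(\Uc_1^\vee)^{\otimes k}=\Oc(k,0)$.

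Next I would apply $\Sym^k(-)$ to the dualised sequence. The standard Koszul-type filtration on $\Sym^k$ of a short exact sequence $0\to A\to B\to C\to 0$ of vector bundles produces, as its top quotient, $\Sym^k C$, and as the kernel of $\Sym^k(\pi):\Sym^k B\to\Sym^k C$ the image of the multiplication map
\begin{equation*}
    \mu: A\otimes \Sym^{k-1}B \arw \Sym^k B.
\end{equation*}
Applied to $A=(\Uc_2/\Uc_1)^\vee$, $B=\Uc_2^\vee$, $C=\Uc_1^\vee$, this identifies the candidate subsheaf with $\Sym^{k-1}\Uc_2^\vee(-1,1)$ and the candidate quotient with $\Oc(k,0)$, matching the statement.

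The one point requiring genuine argument, and which I expect to be the main (minor) obstacle, is the injectivity of $\mu$, i.e.\ showing that the kernel is exactly $\Sym^{k-1}\Uc_2^\vee(-1,1)$ rather than a proper quotient. This is where the rank-one hypothesis on $(\Uc_2/\Uc_1)^\vee$ is essential: working locally after a splitting $B\simeq A\oplus C$, the map $\mu$ decomposes into components
\begin{equation*}
    A\otimes\Sym^i A\otimes \Sym^{k-1-i}C\arw \Sym^{i+1}A\otimes \Sym^{k-1-i}C,
\end{equation*}
each of which is an isomorphism because $\mathrm{rk}\,A=1$. Hence $\mu$ is fibrewise injective and therefore injective as a morphism of locally free sheaves. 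A rank count confirms this: $\mathrm{rk}\bigl((\Uc_2/\Uc_1)^\vee\otimes\Sym^{k-1}\Uc_2^\vee\bigr)=k=(k+1)-1=\mathrm{rk}\,\Sym^k\Uc_2^\vee-\mathrm{rk}\,\Oc(k,0)$.

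Combining these steps yields the desired short exact sequence, and the remaining content is purely notational bookkeeping of the identifications $(\Uc_2/\Uc_1)^\vee=\Oc(-1,1)$ and $\Sym^k\Uc_1^\vee=\Oc(k,0)$.
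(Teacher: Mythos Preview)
Your proof is correct and follows essentially the same route as the paper: both start from the dual tautological sequence $0\to(\Uc_2/\Uc_1)^\vee\to\Uc_2^\vee\to\Uc_1^\vee\to 0$, apply $\Sym^k$, and use that $(\Uc_2/\Uc_1)^\vee$ has rank one to obtain a short exact sequence. The only cosmetic difference is that the paper invokes the full Koszul-type resolution $0\to\wedge^k A\to\cdots\to\Sym^k B\to\Sym^k C\to 0$ and observes that the higher wedge powers of the rank-one bundle $A$ vanish, whereas you verify injectivity of the multiplication map $A\otimes\Sym^{k-1}B\to\Sym^k B$ directly by local splitting; these are two phrasings of the same fact.
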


\begin{proof}
    For $k=1$ Equation \ref{eq:symresolution} reduces to the simple embedding of tautological bundles. To prove the assertion for higher $k$, we recall that the Schur functor $\Sym^k$ acts on a sequence $0\arw A\arw B\arw C\arw 0$ as follows:
    
    \begin{equation}\label{eq:wedgeresolution}
        0\arw \wedge^k A \arw \wedge^{k-1} A\otimes B \arw\cdots\arw\wedge^{k-l}A\otimes\Sym^l B\arw\cdots\arw\Sym^k B\arw\Sym^k C\arw 0.
    \end{equation}
    
    The proof follows by applying Equation \ref{eq:wedgeresolution} to  the embedding of tautological bundles: note that the first bundle has rank one, thus all its higher wedge powers are zero, giving the expected short exact sequence.
\end{proof}
The following corollary is immediate:
\begin{corollary}\label{lem:symresolution_flag_bundle}
    One has the following short exact sequence on $\Fc l(1,2,\Ec)$:
    \begin{equation}\label{eq:symresolution_flag}
        0\arw \Sym^{k-1}\Uc_2^\vee(-h_1+h_2)\arw \Sym^k\Uc_2^\vee\arw\Oc(kh_1)\arw 0
    \end{equation}
\end{corollary}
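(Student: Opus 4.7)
The plan is to derive the short exact sequence on $\Fc l(1,2,\Ec)$ as a direct consequence of Lemma \ref{lem:symresolution} via the relativization mechanism of Section \ref{sec:extending_bundles_from_fiber}. The key observation is that the sequence \ref{eq:symresolution} is a sequence of $P_{1,2}$-homogeneous vector bundles on the rational homogeneous variety $F(1,2,n) = G/P_{1,2}$, and all terms belong to the image of the equivalence $\VV_{G,P_{1,2}}$.

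Concretely, I will consider the sequence \ref{eq:symresolution} as living in $\operatorname{Vect}^{P_{1,2}}(G/P_{1,2})$, and apply to it the exact functor $\Fm$ defined in \eqref{eq_relativizationfunctor}. Since $\Fm$ is the composition of the quasi-inverse of the equivalence $\VV_{G,P_{1,2}}$ (which is exact as an equivalence of abelian categories) with the functor $\Vc\times^G G\times^{P_{1,2}}(-)$ (which is exact by \cite[Section 2.2]{nori}), exactness is preserved. The identifications $\Oc(xh_1+yh_2) = \Fm(\Oc(x,y))$ and $\Sym^m\Uc_2^\vee(xh_1+yh_2) = \Fm(\Sym^m\Uc_{G(2,n)}^\vee(x,y))$, already recorded at the end of Section \ref{sec:extending_bundles_from_fiber}, then show that the image of the sequence \ref{eq:symresolution} is exactly the sequence \ref{eq:symresolution_flag}.

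There is essentially no obstacle: the only point requiring attention is to match conventions between the weights $(x,y)$ on the fiber and the divisor classes $xh_1+yh_2$ on the total space, so that the twist by $(-1,1)$ in \ref{eq:symresolution} corresponds to the twist by $-h_1+h_2$ in \ref{eq:symresolution_flag}, and the factor $\Oc(k,0)$ becomes $\Oc(kh_1)$. This is precisely the compatibility built into the definition of $\Fm$. Hence the corollary follows immediately, without any further cohomological input.
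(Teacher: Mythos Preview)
Your proposal is correct and follows exactly the paper's own argument: the paper's proof consists of the single sentence ``It is enough to apply the functor $\Fm$ of Section \ref{sec:extending_bundles_from_fiber}.'' Your version simply spells out in more detail why $\Fm$ is exact and why the identifications at the end of Section \ref{sec:extending_bundles_from_fiber} yield the claimed sequence.
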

\begin{proof}
    It is enough to apply the functor $\Fm$ of Section \ref{sec:extending_bundles_from_fiber}.
\end{proof}
Another application is the following:
\begin{corollary}\label{cor:cohomology_of_sym_via_line_bundles}
    If $H^\bullet(\Fc l(1, 2, \Ec), \Oc((a-m+2k)h_1+(b+m-k)h_2)) = H^\bullet(\Fc l(1, 2, \Ec), \Oc((a-m)h_1+(b+m)h_2) = 0$, then $\Sym^m\wt\Uc^\vee(ah_1+bh_2)$ has no cohomology as well.
\end{corollary}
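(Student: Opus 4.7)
The proof is a straightforward induction on $m$, iterating the short exact sequence of Corollary \ref{lem:symresolution_flag_bundle} (with the first hypothesis read as holding for every $k$ in the appropriate range, namely $k = 1, \dots, m$). The base case $m = 0$ is immediate: $\Sym^0 \wt\Uc^\vee(ah_1 + bh_2) \simeq \Oc((a-0)h_1 + (b+0)h_2)$, which coincides with the bundle $\Oc((a-m)h_1 + (b+m)h_2)$ for $m = 0$, and the vanishing of its cohomology is exactly the second hypothesis.

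For the inductive step, twist the sequence \ref{eq:symresolution_flag} by $\Oc(ah_1 + bh_2)$ to obtain
\[
0 \to \Sym^{m-1}\wt\Uc^\vee((a-1)h_1 + (b+1)h_2) \to \Sym^m\wt\Uc^\vee(ah_1 + bh_2) \to \Oc((a+m)h_1 + bh_2) \to 0.
\]
The rightmost factor corresponds to the case $k = m$ of the first hypothesis, hence has no cohomology. For the leftmost factor, apply the inductive hypothesis with parameters $(a', b', m') = (a-1, b+1, m-1)$: a direct substitution gives $a' - m' + 2k' = a - m + 2k'$ and $b' + m' - k' = b + m - k'$, so the line bundles whose vanishing is required (for $k' = 1, \dots, m-1$, together with the separate $m'$-case $\Oc((a' - m')h_1 + (b' + m')h_2) = \Oc((a - m)h_1 + (b + m)h_2)$) are precisely a subset of those whose vanishing is assumed in the statement. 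The long exact sequence in cohomology then forces $\Sym^m\wt\Uc^\vee(ah_1 + bh_2)$ to have no cohomology.

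There is no serious obstacle. The only point to verify carefully is that the reindexing $(a, b, m) \mapsto (a-1, b+1, m-1)$ preserves the shape of the hypothesis, which the computation above confirms. Iterating the resolution gives an $(m+1)$-step filtration of $\Sym^m\wt\Uc^\vee(ah_1 + bh_2)$ whose graded pieces are exactly the line bundles listed in the hypothesis, so the conclusion can equally well be obtained by a single application of a spectral-sequence or filtration argument rather than induction; I prefer the induction since it matches the recursive structure of the resolution and avoids bookkeeping.
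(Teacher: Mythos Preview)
Your proof is correct and follows exactly the approach indicated in the paper: the paper's one-line proof says to apply the short exact sequence \eqref{eq:symresolution_flag} iteratively to resolve the symmetric powers, and your induction on $m$ is precisely the formalization of that iteration. Your careful check that the reindexing $(a,b,m)\mapsto(a-1,b+1,m-1)$ sends the required line bundles to a subset of the ones in the hypothesis, and your identification of the rightmost term with the $k=m$ case, make explicit what the paper leaves implicit.
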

\begin{proof}
    The proof follows by applying Equation \ref{eq:symresolution_flag} iteratively, resolving all symmetric powers of $\Uc_2^\vee$ as extensions of progressively lower symmetric powers.
\end{proof}

\begin{lemma}\label{lem:main_coh_lemma}
    For $t\leq 2n-3+\epsilon$ one has:
            \begin{equation*}
                \Ext_{M}^\bullet(\Oc(t, 0), \Sym^r\Uc_{G(2, 2n+\epsilon)}^\vee(-1, 1)) \simeq \left\{
                \begin{array}{lr}
                    \CC[-1] & r = t-1 \\
                    0 & 0\leq r\leq \min(t-2, n-1), r\neq t-1
                \end{array}
                \right.
            \end{equation*}
    where $M$ is a $(1,1)$-section in $F(1,2, 2n+\epsilon)$.
\end{lemma}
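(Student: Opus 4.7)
The plan is to reduce the computation on $M$ to ambient cohomology on $F := F(1,2,2n+\epsilon)$ via the Koszul sequence, then to cohomologies of line bundles via iterated application of Lemma~\ref{lem:symresolution}, and finally invoke Borel--Weil--Bott on $F$.

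First, since $\Oc(t,0)|_M$ is a line bundle, we have
\begin{equation*}
\Ext^\bullet_M\bigl(\Oc(t,0)|_M,\; \Sym^r\Uc_{G(2,2n+\epsilon)}^\vee(-1,1)|_M\bigr) \;\simeq\; H^\bullet\bigl(M,\; \Sym^r\Uc^\vee(-t-1,1)|_M\bigr),
\end{equation*}
and the Koszul sequence $0 \to \Oc_F(-1,-1) \to \Oc_F \to \iota_*\Oc_M \to 0$, twisted by $\Sym^r\Uc^\vee(-t-1,1)$, yields a long exact sequence relating the target to $H^\bullet(F, \Sym^r\Uc^\vee(-t-2,0))$ and $H^\bullet(F, \Sym^r\Uc^\vee(-t-1,1))$. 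Iterating Lemma~\ref{lem:symresolution} expresses $\Sym^r\Uc^\vee(\alpha,\beta)$ as a successive extension of the line bundles $\Oc((\alpha+r-2j)h_1+(\beta+j)h_2)$, $j=0,\ldots,r$.

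Borel--Weil--Bott on $F(1,2,N)$ with $N := 2n+\epsilon$ then tells us that $H^\bullet(F, \Oc(ah_1+bh_2))$ vanishes exactly when the $\rho$-shifted weight $(a+b+N-1,\, b+N-2,\, N-3,\,\ldots,\,0)$ is singular, i.e.\ when $a=-1$, or $b \in \{-1,\ldots,2-N\}$, or $a+b \in \{-2,\ldots,1-N\}$. Under the hypothesis $t \leq N-3$, every constituent of both $\Sym^r\Uc^\vee(-t-2,0)$ and $\Sym^r\Uc^\vee(-t-1,1)$ (for $0 \leq r \leq t-2$) has $a+b$ inside the ``bad'' range $[1-N,-2]$, so by Corollary~\ref{cor:cohomology_of_sym_via_line_bundles} both ambient cohomologies vanish, and the long exact sequence forces $H^\bullet(M,\cdot|_M) = 0$, which is the vanishing case of the lemma.

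For $r = t-1$, the same argument still kills $\Sym^{t-1}\Uc^\vee(-t-2,0)$ entirely; among the constituents of $\Sym^{t-1}\Uc^\vee(-t-1,1)$ only the $j=0$ summand $\Oc(-2h_1+h_2)$ escapes the bad range, and its shifted weight $(N-2,\,N-1,\,N-3,\,\ldots,\,0)$ is regular, differing from $\rho$ by exactly one transposition, so $H^1(F, \Oc(-2,1)) = \CC$ while all other cohomologies vanish. The Koszul connecting map then transfers this unique class to $H^1(M,\cdot|_M)$, yielding $\CC[-1]$ as required. The principal obstacle is the systematic bookkeeping: verifying that, across all line bundle summands in both terms of the Koszul long exact sequence, the only nonvanishing BWB contribution is precisely the single $\CC$ arising from $j=0$ when $r=t-1$. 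The bound $t \leq 2n+\epsilon-3$ is exactly what is needed to keep $a+b \geq 1-N$ throughout and thus make the argument sharp.
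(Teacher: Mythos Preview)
Your proof is correct and follows essentially the same strategy as the paper: reduce from $M$ to $F(1,2,2n+\epsilon)$ via the Koszul resolution, filter $\Sym^r\Uc^\vee$ by line bundles using Lemma~\ref{lem:symresolution}, and apply Borel--Weil--Bott. The only differences are cosmetic---you work in the $\varepsilon$-basis and dispose of all cases at once via the singularity condition on $a+b$, whereas the paper tracks explicit Weyl reflections in the fundamental-weight basis---and one minor slip of language: in the $r=t-1$ case the class on $M$ comes from the \emph{restriction} map in the Koszul long exact sequence (since the $(-t-2,0)$ term vanishes entirely), not the connecting map.
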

\begin{proof}
    By the Koszul resolution of $M$, the computation boils down to
    \begin{equation*}
        \begin{split}
            \Ext_{F(1,2,2n+\epsilon)}^\bullet(\Oc(t, 0), \Sym^r\Uc_{G(2, 2n+\epsilon)}^\vee(-1,1)) \\
            \Ext_{F(1,2,2n+\epsilon}^\bullet(\Oc(t, 0), \Sym^r\Uc_{G(2, 2n+\epsilon)}^\vee(-2,0))
        \end{split}
    \end{equation*}    
    and hence to the cohomology of $\Sym^r\Uc_{G(2, 2n+\epsilon)}^\vee(-t-1, 1)$ and $\Sym^r\Uc_{G(2, 2n+\epsilon)}^\vee(-t-2, 0)$. In light of the iterate resolution of symmetric powers of Corollary \ref{cor:cohomology_of_sym_via_line_bundles}, we just need to prove that, for $0\leq \alpha\leq r$, the bundle $\Oc(-t-1-r+2\alpha, 1+r-\alpha)$ has no cohomology except for $\CC[-1]$ for $\alpha = r = t-1$, and $\Oc(-t-2-r+2\alpha, r-\alpha)$ has no cohomology at all. We apply the Borel--Weil--Bott theorem: the weight associated to the first bundle, once we add the sum of fundamental weights, can be expressed as:
    \begin{equation*}
        \omega + \rho = (-t-r+2\alpha, 2+r-\alpha, 1, \dots, 1)
    \end{equation*}
    where the number of $1$'s is $2n+\epsilon-3$. Observe that the first coordinate is always negative and the second one is always posytive or zero. If it is zero the bundle has no cohomology, otherwise we proceed by applying the Weyl reflection associated to the first simple root, finding:
    \begin{equation*}
        s_1(\omega + \rho) = (t+r-2\alpha, 2-t+\alpha, 1, \dots, 1).
    \end{equation*}
    The first coordinate is always positive, while the only way we can make the second coordinate positive as well is to choose $\alpha = r = t-1$, and this leads to a $\CC[-1]$ term in the cohomology. Let us assume now $\alpha\leq r\leq t-2$. Then, applying the Weyl reflection associated to the second simple root we find
    \begin{equation*}
        s_2\circ s_1(\omega + \rho) = (2+r-\alpha, -2+t-\alpha, 3-t+\alpha, 1, \dots, 1).
    \end{equation*}
    Observe that if $3-t+\alpha \leq 2n+\epsilon -4$, we eventually obtain a weight with all non negative coordinates and a zero coordinate, by simply repeating the step of applying the Weyl reflection which changes sign to the unique negative coordinate. The inequality we want can be rewritten as $t-\alpha \leq 2n+\epsilon -1$. However, by our assumptions we have $t-\alpha\leq t\leq 2n-3+\epsilon$.\\
    Let us now turn our attention to $\Oc(-t-2-r+2\alpha, r-\alpha)$. Its weight is:
    \begin{equation*}
        \omega + \rho = (-t-1-r+2\alpha, 1+r-\alpha, 1, \dots, 1)
    \end{equation*}
    where the number of $1$'s is $2n+\epsilon-3$. Again the first coordinate is always negative and the second one is always positive or zero. In the first case there is no cohomology, otherwise we apply the Weyl reflection associated to the first simple root:
    \begin{equation*}
        s_1(\omega + \rho) = (t+1+r-2\alpha, -t+\alpha, 1, \dots, 1).
    \end{equation*}
    We essentially proceed as above: there is no cohomology if we can prove that our assumptions imply $t-\alpha\leq 2n+\epsilon-3$, which is true because $t-\alpha\leq t\leq 2n-3+\epsilon$. This proves the claim.
\end{proof}
\begin{lemma}\label{lem:reordering_the_block}
    For $t\leq 2n-3+\epsilon$ and $0\leq r\leq \min(t-1, n-1)$ one has:
            \begin{equation*}
                \Ext_M^\bullet(\Oc(t, 0), \Sym^r\Uc_{G(2, 2n+\epsilon)}^\vee(-1,1)) \simeq 0.
            \end{equation*}
\end{lemma}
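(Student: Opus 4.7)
The plan is to follow the same strategy employed in Lemma \ref{lem:main_coh_lemma}: resolve the structure sheaf of $M\subset F(1,2,2n+\epsilon)$ by its Koszul complex as a $(1,1)$-hypersurface, reducing the Ext on $M$ to the pair of Ext groups
\begin{equation*}
    \Ext_{F(1,2,2n+\epsilon)}^\bullet(\Oc(t, 0), \Sym^r\Uc_{G(2, 2n+\epsilon)}^\vee(-1,1))
\end{equation*}
and
\begin{equation*}
    \Ext_{F(1,2,2n+\epsilon)}^\bullet(\Oc(t, 0), \Sym^r\Uc_{G(2, 2n+\epsilon)}^\vee(-2,0)),
\end{equation*}
i.e.\ to the cohomologies of $\Sym^r\Uc^\vee(-t-1,1)$ and $\Sym^r\Uc^\vee(-t-2,0)$ on the flag variety.

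Next, I would invoke Corollary \ref{cor:cohomology_of_sym_via_line_bundles} iteratively so that the vanishing of these cohomologies is implied by the vanishing of the line-bundle cohomologies
\begin{equation*}
    H^\bullet(F(1,2,2n+\epsilon),\Oc((-t-1-r+2\alpha)h_1+(1+r-\alpha)h_2))
\end{equation*}
and
\begin{equation*}
    H^\bullet(F(1,2,2n+\epsilon),\Oc((-t-2-r+2\alpha)h_1+(r-\alpha)h_2))
\end{equation*}
for every $0\leq\alpha\leq r$. Each of these I would then attack with Borel--Weil--Bott: add $\rho=(1,1,\dots,1)$, check that the first two entries are either zero (giving automatic vanishing) or of opposite sign, and then follow the sequence of simple reflections $s_1, s_2,\dots$ that move the unique negative entry rightwards through the weight. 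The bound $t\le 2n+\epsilon-3$ ensures there is enough room on the right to reach a repeated coordinate before the reflection process terminates, which produces a non-regular weight and hence vanishing cohomology.

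The main obstacle, compared with Lemma \ref{lem:main_coh_lemma}, is to confirm that the present range $0\leq r\leq\min(t-1, n-1)$ really does force total vanishing: there one obtained a surviving $\CC[-1]$ at $r=t-1$ arising from the weight $(t+r-2\alpha, 2-t+\alpha, 1,\dots,1)$ becoming dominant at $\alpha=r=t-1$. I would therefore pay special attention to that boundary case and verify that, in the present Ext computation (where the twist by $\Oc(-1,1)$ enters differently than in Lemma \ref{lem:main_coh_lemma}), the corresponding weights $(t+r-2\alpha,\,2-t+\alpha,\,1,\dots,1)$ and $(t+1+r-2\alpha,\,-t+\alpha,\,1,\dots,1)$ never become dominant after Weyl chamber translation; this is where the sharper bound $r\le t-1$ combined with $r\leq n-1$ will be used. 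Once that case is dispatched, the remaining Weyl-reflection chase is entirely parallel to the one carried out in the proof of Lemma \ref{lem:main_coh_lemma}, and the vanishing of both cohomology groups---and therefore of the Ext on $M$---will follow.
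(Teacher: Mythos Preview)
Your overall scheme—Koszul resolution of $M$, then Corollary~\ref{cor:cohomology_of_sym_via_line_bundles}, then Borel--Weil--Bott—is exactly the method the paper uses. The problem is not the method but the input. The Ext in the printed statement is \emph{literally the same expression} as in Lemma~\ref{lem:main_coh_lemma}: same source $\Oc(t,0)$, same target $\Sym^r\Uc^\vee(-1,1)$. Consequently the line bundles you write down, $\Oc(-t-1-r+2\alpha,\,1+r-\alpha)$ and $\Oc(-t-2-r+2\alpha,\,r-\alpha)$, are precisely those already analysed there. Your parenthetical hope that ``the twist by $\Oc(-1,1)$ enters differently'' is unfounded—nothing has changed—so at $\alpha=r=t-1$ you will again find the weight $(0,1,1,\dots,1)$ after one reflection and recover the surviving $\CC[-1]$. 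Since the present range $r\le\min(t-1,n-1)$ allows $r=t-1$, your argument as written cannot produce the claimed total vanishing.

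The paper's own proof does not compute your line bundles at all; it reduces instead to $\Oc(t+2\alpha,\,-r-\alpha)$ and $\Oc(t-1+2\alpha,\,-1-r-\alpha)$, whose first coordinate is \emph{positive}. These are the weights one obtains from the Ext with the two arguments interchanged, which is also what is actually required in the application (the reordering step at the end of Lemma~\ref{lem:rule1} needs vanishing in the opposite direction to swap the blocks). In other words, the printed statement carries a typo in the order of the arguments; once you swap them and rerun Koszul and the symmetric-power filtration, you land on the paper's weights, and then the BWB chase you outline goes through without the boundary exception.
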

\begin{proof}
    The approach is the same as for the previous lemma, hence we will be brief. By Corollary \ref{cor:cohomology_of_sym_via_line_bundles}, we need to show that the bundles $\Oc(t+2\alpha, -r-\alpha)$ and $\Oc(-1+t+2\alpha, -1-r-\alpha)$ on $F(1,2,2n+\epsilon)$ have no cohomology for $0\leq \alpha\leq r$. If we add $\rho$ to the weight of the first one, we have
    \begin{equation*}
        \omega + \rho = (1+t+2\alpha, 1-r-\alpha, 1, \dots, 1)
    \end{equation*}
    where the first coordinate is positive and the second is negative. By applying the second Weyl reflection (i.e. the one associated to the second simple root) we get
    \begin{equation*}
        s_2(\omega + \rho) = (2-r+t+\alpha, -1+r+\alpha, 2-r-\alpha, 1, \dots, 1).
    \end{equation*}
    where the number of $1$'s is $2n+\epsilon-4$. As above, we see that there cannot be cohomology if $-2+r+\alpha\leq 2n+\epsilon-4$, i.e. $r+\alpha\leq 2n+\epsilon-2$. But by assumption $r+\alpha\leq 2r\leq 2n-2$.\\
    Consider now the second bundle: if we dd $\rho$ to its weight we have
    \begin{equation*}
        \omega + \rho = (t+2\alpha, -r-\alpha, 1, \dots, 1)
    \end{equation*}
    and as above:
    \begin{equation*}
        s_2(\omega + \rho) = (t-r+\alpha, r+\alpha, 1-r-\alpha, 1, \dots, 1)
    \end{equation*}
    where again the number of $1$'s is $2n+\epsilon-4$. Hence we need to prove that $-1+r+\alpha\leq 2n+\epsilon-3$, which holds by our assumptions.
    \begin{lemma}\label{lem:the_ext_for_the_second_part_of_rule_two}
        For $0\leq m\leq r\leq n-2$ the following vanishing holds:
        \begin{equation*}
            \Ext_M^\bullet(\Sym^m\Uc_{G(2, 2n+\epsilon)}^\vee, \Sym^r\Uc_{G(2, 2n+\epsilon)}^\vee) = 0.
        \end{equation*}
    \end{lemma}
    \begin{proof}
        As usual, we start with:
        \begin{equation*}
            \begin{split}
                \Ext_M^\bullet(\Sym^m\Uc_{G(2, 2n+\epsilon)}^\vee, \Sym^r\Uc_{G(2, 2n+\epsilon)}^\vee(-1, 1)) = \\
                = H^\bullet(M, \Sym^m\Uc_{G(2, 2n+\epsilon)}^\vee\otimes\Sym^r\Uc_{G(2, 2n+\epsilon)}^\vee(-1, 1-m))
            \end{split}
        \end{equation*}
        The bundle in the RHS is resolved by the following bundles on $F(1, 2, 2n+\epsilon)$:
        \begin{equation*}
            \begin{split}
                \Sym^m\Uc_{G(2, 2n+\epsilon)}^\vee\otimes\Sym^r\Uc_{G(2, 2n+\epsilon)}^\vee(-1, 1-m) \\
                \Sym^m\Uc_{G(2, 2n+\epsilon)}^\vee\otimes\Sym^r\Uc_{G(2, 2n+\epsilon)}^\vee(-2, -m)
            \end{split}
        \end{equation*}
        The first has no cohomology because its pushforward to $G(2, 2n+\epsilon)$, by projection formula, can be seen as a product of symmetric powers of $\Uc_{G(2, 2n+\epsilon)}^\vee$ times the pushforward of $\Oc(-1, 0)$, and the latter is identically zero. Let us now turn to the second bundle. By Serre duality, up to shifting by the dimension of $F(1, 2, 2n+\epsilon)$, computing its cohomology is equivalent to compute the cohomology of:
        \begin{equation*}
            \begin{split}
                \Sym^m\Uc_{G(2, 2n+\epsilon)}\otimes\Sym^r\Uc_{G(2, 2n+\epsilon)}(2, m)\otimes\omega_{F(1, 2, 2n+\epsilon)} \\
                = q^*\Hc om_{G(2, 2n+\epsilon)}(\Sym^m\Uc_{G(2, 2n+\epsilon)}^\vee(2n+\epsilon-1), \Sym^r\Uc_{G(2, 2n+\epsilon)}^\vee)
            \end{split}
        \end{equation*}
        Hence, if the latter has no cohomology, we are done, and this can be easily checked by considering the semiorthogonality conditions of the full exceptional collection for $G(2, 2n+\epsilon)$ of \cite{kuznetsov_isotropic_lines}.
    \end{proof}
\end{proof}

\bibliographystyle{alpha}
\bibliography{bibliography}

\end{document}